\def\subsubsection{\@startsection{subsubsection}{3}%
	\z@{.5\linespacing\@plus.7\linespacing}{-.5em}%
	{\normalfont\bfseries}}
\newtheorem{thm}{Theorem}[section]
\newtheorem{lemme}[thm]{Lemma}
\newtheorem{prop}[thm]{Proposition}
\newtheorem{coro}[thm]{Corollary}
\newtheorem{defi}[thm]{Definition}
\newtheorem{rem}[thm]{Remark}
\newtheorem{rems}[thm]{Remarks}
\newtheorem{quest}[thm]{Question}
\newtheorem*{thm*}{Theorem}
\def\D{{\mathbb{D}}}
\def\R{{\mathbb{R}}}
\def\C{{\mathbb{C}}}
\def\N{{\mathbb{N}}}
\def\Z{{\mathbb{Z}}}
\def\T{{\mathbb{T}}}
\def\UU{{\mathcal{U}}}
\def\TT{{\mathcal{T}}}
\def\FF{{\mathcal{F}}}
\def\CC{{\mathcal{C}}}
\def\VV{{\mathcal{V}}}
\def\e{\varepsilon}
\def\dlow{\underline{d}}
\newcommand{\vp}{\varphi}
\numberwithin{equation}{section} 
\numberwithin{figure}{section} 
\numberwithin{table}{section} 
\begin{document}
	\title{Abel universal functions}
	\author{S. Charpentier, A. Mouze}
	
	\address{St\'ephane Charpentier, Institut de Math\'ematiques, UMR 7373, Aix-Marseille
		Universite, 39 rue F. Joliot Curie, 13453 Marseille Cedex 13, France}
	\email{stephane.charpentier.1@univ-amu.fr}
	
	\address{Augustin Mouze, Laboratoire Paul Painlev\'e, UMR 8524, Cit\'e Scientifique, 59650 Villeneuve
		d'Ascq, France, Current address: \'Ecole Centrale de Lille, Cit\'e Scientifique, CS20048, 59651
		Villeneuve d'Ascq cedex, France}
	\email{augustin.mouze@univ-lille.fr}
	
	\thanks{The authors are supported by the grant ANR-17-CE40-0021 of the French National Research Agency ANR (project Front).}
	\keywords{Universal function, Boundary behaviour}
	\subjclass[2010]{30K05, 40A05, 41A10, 47A16}
	
	\begin{abstract} Given a sequence $\varrho=(r_n)_n\in [0,1)$ tending to $1$, we consider the set $\UU_A(\D,\varrho)$ of Abel universal functions consisting of holomorphic functions $f$ in the open unit disc $\mathbb{D}$ such that for any compact set $K$ included in the unit circle $\T$, different from $\T$, the set $\{z\mapsto f(r_n \cdot)\vert_K:n\in\mathbb{N}\}$ is dense in the space $\CC(K)$ of continuous functions on $K$. It is known that the set $\UU_A(\D,\varrho)$ is residual in $H(\mathbb{D})$. We prove that it does not coincide with any other classical sets of universal holomorphic functions. In particular, it is not even comparable in terms of inclusion to the set of holomorphic functions whose Taylor polynomials at $0$ are dense in $\CC(K)$ for any compact set $K\subset \T$ different from $\T$. Moreover we prove that the class of Abel universal functions is not invariant under the action of the differentiation operator. Finally an Abel universal function can be viewed as a universal vector of the sequence of dilation operators $T_n:f\mapsto f(r_n \cdot)$ acting on $H(\mathbb{D})$. Thus we study the dynamical properties of $(T_n)_n$ such as the multi-universality and the (common) frequent universality. All the proofs are constructive.
	\end{abstract}
	
	\maketitle
	
	\section{Introduction} Let $\mathbb{D}:=\{z\in\mathbb{C}:\vert z\vert<1\}$ denote the open unit disc, $\T=\{z\in \C:\, \vert z\vert=1\}$ the unit circle and let $H(\mathbb{D})$ be the Fréchet space consisting of all holomorphic functions on $\mathbb{D}$ endowed with the topology of uniform convergence on all compact subsets of $\mathbb{D}$. When one is interested in the boundary behaviour of a holomorphic function $f$ in $\D$, one can look at the behaviour on $\T$ of the partial sums of the Taylor expansion of $f$ at $0$, or one can look at the behaviour of $f(z)$ when $z$ in $\D$ approaches $\T$. In the second case, the notion of cluster set of $f$ at $z$ comes naturally into play. Let us recall that if $\gamma:[0,1)\to \D$ is a continuous path to a boundary point of $\D$ (that is $\gamma(r)\to z \in \T$ as $r\to 1$), the cluster set of $f$ along $\gamma$ is defined as
	\[
	C_{\gamma}(f):=\{w\in \C:\, \exists (r_n)_n\in [0,1),\,r_n\to 1,\,\text{ such that }f\circ \gamma(r_n)\to w\}.
	\]
	In complex function theory, it is of great interest to distinguish and study classes of holomorphic functions with a \emph{regular} boundary behaviour. \emph{Regular} boundary behaviour at a point of $\T$ can mean, for instance, \emph{convergence}, \emph{Ces\`aro summability} or \emph{Abel summability} of the Taylor expansion at this point. We recall that a function $f$ in $H(\D)$ is \emph{Abel summable} at $\zeta \in \T$ if the quantity $f(r\zeta)$, $r\in [0,1)$, has a finite limit as $r\to 1$. Note that in the latter case, the cluster set of $f$ along the radius $\{r\zeta:\,r\in[0,1)\}$ is reduced to a single value. Yet, it is now well understood that \emph{non-regularity} is a \emph{generic} behaviour. We say that a property is generic in a Baire space $X$ if the set of those $x\in X$ which satisfy this property contains a countable intersection of dense open sets. For example, it was already observed in 1933 that the set of functions whose cluster sets along any radius is equal to $\C$, is residual in $H(\D)$ \cite{Kierst}. In 2008, it was even shown that the set - denoted by $\UU_C(\D)$ - of those functions $f\in H(\D)$ satisfying the previous property along any (continuous) path $\gamma:[0,1)\to \D$ having some limit $\zeta \in \T$ at $1$ is residual in $H(\D)$ \cite{BernalCP}. More recently the first author exhibited in \cite{Charp} another residual set of functions in $H(\D)$ with a boundary behaviour at least as wild as those of $\UU_C(\D)$. Let $\varrho:=(r_n)_n \subset [0,1)$ be a sequence convergent to $1$ and denote by $\UU_A(\D,\varrho)$ the set of those $f\in H(\D)$ that satisfy the following property: for any compact set $K\subset \T$ different from $\T$, the set $\{f_{r_n}|_K:\,n\in \N\}$ is dense in the space $\CC(K)$ of all continuous functions on $K$ endowed with the uniform topology. Here, for $r\in [0,1)$, we denote by $f_r$ the function given by $f_r(z)=f(rz)$, $rz\in \D$. It was then observed in \cite{Charp} that $\UU_A(\D,\varrho)$ is residual in $H(\D)$ and that $\UU_A(\D,\varrho)\subset \UU_C(\D)$. At this point, we would like to introduce another natural class of universal functions, defined on the model of $\UU_A(\D,\varrho)$ but independent of the choice of a sequence $\varrho$. Precisely, let $\UU_A(\D)$ denote the set of all functions $f$ in $H(\D)$ satisfying that, given any compact set $K\subset \T$ different from $\T$ and any $h\in \CC(K)$, there exists $(r_n)_n\subset [0,1)$ tending to $1$ such that $f_{r_n}$ approximates $h$ uniformly on $K$. One can easily checks that
	\[
	\UU_A(\D,\varrho)\subset \UU_A(\D) \subset \UU_C(\D).
	\]
	In the whole paper, the elements of $\UU_A(\D,\varrho)$ will be referred to as \emph{$\varrho$-Abel universal functions}, and those of $\UU_A(\D)$ as \emph{Abel universal functions}. For the interested readers, we should mention the papers \cite{Bayae,CharpKos} where universal boundary phenomena for holomorphic functions in several complex variables are exhibited and \cite{Abak} where universal boundary behaviour for harmonic functions on trees are studied.
	
	For those who are familiar with the topic, the property enjoyed by the functions in $\UU_A(\D,\varrho)$ evokes that enjoyed by the well-studied \emph{universal Taylor series}. In 1996, Nestoridis proved that there exists a residual set $\UU(\D)$ of functions $f$ in $H(\D)$ such that for any compact set $K\subset \C\setminus \D$, with connected complement, the set $\{S_n(f)|_K:\, n\in \N\}$ of all partial sums of the Taylor expansion of $f$ at $0$ is dense in $\CC(K)$ \cite{NestorF}. Let us denote by $\UU(\D,\T)$ the set of those functions $f\in H(\D)$ satisfying the previous property not for any compact set $K\in \C\setminus \D$ with connected complement, but only for all compact sets $K\subset \T$ different from $\T$. Note that $\UU(\D)\subset \UU(\D,\T)$. The functions in $\UU(\D)$ were extensively studied from many points of view. We refer to \cite{Bay1,bgnp,CharMou,CostaTsi,CostaJung,Gar,GarMan,MNadv,MNP,MouJMAA} and the references therein for a non-exhaustive list of papers. In particular, some of these highlight the fact that functions in $\UU(\D)$ enjoy irregular boundary behaviour, for example along radii. In comparison, the recently introduced Abel universal functions were considered in two papers \cite{Charp,Maro}. In \cite{Charp}, the first author sketched a comparison of the sets $\UU(\D)$ and $\UU_A(\D,\varrho)$ and noticed that the main results of \cite{Cos,Gehlen} actually imply that none of them is included in the other. More precisely, the assertion $\UU_A(\D,\varrho)\setminus \UU(\D)\neq \emptyset$ was derived from the fact that the partial sums of the functions in $\UU(\D)$ have to behave wildy not only at the boundary of $\D$, but also on compact sets as far as possible from $0$. This thus leads to ask rather if $\UU(\D,\T)$ and $\UU_A(\D,\varrho)$ or $\UU_A(\D)$ are comparable. The question was explicitly stated in \cite{Charp}.
	
	The first goal of this paper is to show that neither $\UU_A(\D)$ nor $\UU_A(\D,\varrho)$ is comparable to $\UU(\D,\T)$ and that $\UU_C(\D)\setminus \UU_A(\D)\neq \emptyset$, making it interesting to study independently the functions in $\UU_A(\D,\varrho)$ or $\UU_A(\D)$. It is worth mentioning that, whereas exhibiting generic functions with universal behaviour is rather standard, on the contrary building \emph{non-generic} functions that still enjoy some universal property can be very tricky. This is probably why there are very few results of this type. For example, in order to build up a function in $\UU_A(\D,\varrho)$ which is not in $\UU(\D,\T)$, we will make use of a purely constructive trick. In passing, we will observe that this trick can turn out to be useful in order to attack one of the most important open question about universal series: does the derivative of a universal Taylor series remain a universal Taylor series? We will build functions in $\UU(\D,\T)$ with partial sums simultaneously enjoying a universal property outside $\overline{\D}$, and whose derivative is not even in $\UU(\D,\T)$. Let us observe that by Weierstrass' theorem, it is easily seen that if $\{S_n(f)|_K:\,n\in \N\}$ is dense in $\CC(K)$ for any $K\subset \C\setminus \overline{\D}$, then so is $\{S_n(f')|_K:\,n\in \N\}$. The same question makes sense for $\varrho$-Abel universal functions. In this context, we will be able to exhibit functions in $\UU_A(\D,\varrho)$ whose derivative is not in $\UU_A(\D,\varrho)$, answering a question also posed in \cite{Charp}.
	
	\medskip
	
	Being now convinced that the notions of universal Taylor series and Abel universal functions are quite distinct, it is legitimate to consider the study of the second one for itself. Like universal Taylor series, Abel universal functions are natural examples within the large theory of universality in operator theory. Let us say that, given two Fréchet spaces $X$ et $Y$, a family $(T_i)_{i\in I}$ of continuous operators from $X$ to $Y$ is \emph{universal} if there exists $x\in X$ such that the set
	\[
	\{T_i(x):\,i\in I\}
	\]
	is dense in $Y$. Such a vector $x$ is said to be universal for $(T_i)_{i\in I}$. Most of the concrete examples of universal families of operators fall into the situation where $I=\N$, $X=Y$ and $T_n=T^n$, $n\in \N$, where $T$ is a continuous operator from $X$ to $X$. In this case the operator $T$ is called \emph{hypercyclic} whenever $(T^n)_n$ is universal and the notion lies within linear dynamics, a very active branch of operator theory. Apart from this setting, there are some other \emph{natural} families of operators that are universal \cite{bgnp,GEbams}. By \emph{natural} families of operators, we mean families of operators which naturally come into play in mathematical analysis. This is in particular the case of the family $\TT_{\varrho}^K:=\{T_{\varrho,n}^K:\, n\in \N\}$ (resp. $\mathcal{S} ^K:= \{S_n^K:\,n\in \N\}$) defined, for a compact set $K\subset \T$ (resp. $K\subset \C\setminus \D$) by
	\[
	T_{\varrho,n}^K:\left\{\begin{array}{lll}H(\D) & \to & \CC(K)\\
	f & \mapsto & f_{r_n}|_K
	\end{array}\right.
	\quad (\text{resp. } S_n^K:\left\{\begin{array}{lll}H(\D) & \to & \CC(K)\\
	f=\sum_ka_kz^k & \mapsto & \sum_{k=0}^na_kz^k|_K
	\end{array}\right.),
	\]
	where $\varrho=(r_n)_n$ is a given sequence in $[0,1)$ tending to $1$. According to the definitions given in the first part of the introduction, the elements of $\UU_A(\D,\varrho)$ with $\varrho=(r_n)_n$ (resp. $\UU(\D)$) appear as functions in $H(\D)$ that are universal for all families $\TT_{\varrho}^K$, $K\subset \T$ different from $\T$ (resp. $\mathcal{S}^K$, $K\subset \C \setminus \D$ with connected complement). Let us observe that the sequences $(T_{\varrho,n}^K)_n$ are universal sequences of composition operators that fit within the framework of the recent paper \cite{CharMouComp}. Further, when $I$ is not countable, standard examples of universal families $(T_i)_{i\in I}$ are given by semigroups. Universality or hypercyclicity of semigroups has been a subject of interest during the last decade, and strong similarities with hypercyclicity of a single operator have been discovered, see \cite{CoMuPe} and \cite[Chapter 3]{BayMat}. In this context, Abel universal functions thus appear as singular and natural examples of objects that are universal for the non-semigroup families $\TT^K:=(T_r^K)_{r\in [0,1)}$ of operators, where $K$ is any subset of $\T$ different from $\T$, and $T_r^K$ is defined by
	\[
	T_r^K:\left\{\begin{array}{lll}H(\D) & \to & \CC(K)\\
	f & \mapsto & f_{r}|_K
	\end{array}\right..
	\]
	
	With all this in mind, it looks quite motivating to wonder which of the most interesting phenomena observed in linear dynamics can also be observed for the natural families of operators that are universal and do not fall into the concept of hypercyclicity. The remainder of this paper will focus on this program around the notion of Abel universal functions.
	
	One of the most elegant results on hypercyclicity, due to Bourdon and Feldman \cite{Bour}, asserts that a vector $x\in X$ is hypercyclic for a bounded operator $T:X\to X$ whenever its orbit under $T$ is \emph{somewhere} dense in $X$. A straight consequence of this fact is a result, independently obtained earlier by Costakis \cite{Costa} and Peris \cite{Peris}, telling us that if for finitely many $x_1,\ldots,x_l\in X$ the set $\bigcup_{k=1}^l\{T^n (x_k):\,n\in \N\}$ is dense in $X$, then one of the $x_i$'s is hypercyclic for $T$. Both results extend to the setting of semigroups \cite{BayMat}. One may now ask whether this property is also shared by sequences $\TT^K$ or $\TT_{\varrho}^K$, for some $\varrho=(r_n)_n \subset [0,1)$ tending to $1$ and $K\subset \T$ different from $\T$. In fact we will constructively prove that this is not the case: given $K_0\subset \T$ different from $\T$ and $\varrho=(r_n)_n$, there exist two functions $f_1,f_2\in H(\D)$, none of them Abel universal for $\TT^{K_0}$, such that the set $\{T_{\varrho,n}^{K}(f_i):\,n\in \N, i=1,2\}$ is dense in $\CC(K)$ for any compact set $K\subset \T$ different from $\T$. As a consequence, this will also show that the families $\TT^K$ and $\TT_{\varrho}^{K}$ do not satisfy a Bourdon-Feldman type property. We should mention that an analogue of those results was obtained by the second author for Fekete universal series, a \emph{real-analytic} variant of universal Taylor series \cite{MouArch}. However, the case of universal Taylor series - \emph{i.e.} for the sequences $\mathcal{S}^K$, $K\subset \C \setminus \D$ with connected complement - is still open.
	
	In 2006, Bayart and Grivaux \cite{BayGri} introduced the notion of \emph{frequent hypercyclicity}, which quickly became central in linear dynamics (see the books \cite{BayMat,gp} and the recent paper \cite{GMM}). It was extended to the larger setting of universality \cite{BonGe}. A sequence $(T_n)_n$ of continuous operators from a Fréchet space $X$ to another one $Y$ is said to be \emph{frequently universal} if there exists $x\in X$ such that the set $\{n\in \N:\, T_n(x) \in U\}$ has positive \emph{lower density} for any non-empty open set $U$ of $Y$. For the definition of the lower density, we refer to Section \ref{section-application}. Roughly speaking, it quantifies the proportion of elements in $\{n\in \N:\, T_n(x) \in U\}$ among all natural numbers. The same notion makes perfectly sense for a family $(T_i)_{i\in I}$ of operators when $I$ is an interval in $\R$, replacing the lower density by the \emph{uniform lower density} (see Section \ref{section-application} for the definition). As far as we know, non-discrete frequent universality was only considered, rather briefly, for semigroups \cite[Chapter 7]{gp}. On the contrary, several descriptions of frequently hypercyclic operators among classes of concrete operators have been obtained \cite{BayMat,gp}. In \cite{Char-countably,MouMun}, it was observed that there cannot exist functions in $H(\D)$ that are frequently universal for all sequences $\mathcal{S}^K$, $K\subset \C \setminus \D$ with connected complement. Yet, if $K$ is fixed outside $\D$, it is still open whether there exist functions in $H(\D)$ frequently universal for $\mathcal{S}^K$. In a broad but different setting, we mention that the authors of \cite{Abak} prove the existence of harmonic functions on trees with frequently universal boundary behaviour.
	
	The last section of this paper is devoted to showing various results revolving around frequent Abel universality. For example, given $\varrho =(r_n)_n\subset [0,1)$ tending to $1$, we will prove a rather strong result implying the existence of a dense meagre subset of $H(\D)$ consisting of functions that are frequently universal for all sequences $\TT_{\varrho}^K$, $K\subset \T$ different from $\T$ (resp. for all families $\TT^K$, $K\subset \T$ different from $\T$). 
	We will even show that there exist functions in $H(\D)$ that are frequently universal for all families $\TT_{\varrho(\alpha)}^K$, $K\subset \T$ different from $\T$, where $(\varrho(\alpha))_{\alpha \in A}$ is a countable collection of pairwise disjoint sequences in $[0,1)$ tending to $1$. These results thus lie within the newly active topic of common frequent universality, see \cite{Bay-com,CEMM}.
	
	\medskip
	
	The paper is organized as follows. The next section gathers some basic materials and definitions that will be used all along the paper. In Section \ref{sec-dist}, we prove that all the classical notions of universal functions are distinct. Section \ref{sec-deriv} deals with the non-stability of the classes of Abel universal functions under the action of the differentiation operator. In Sections \ref{multi} and \ref{section-application} we focus on the "dynamical" properties of the sequences $\TT_{\varrho}^K$ and $\TT_{\varrho}$ with respect to the concept of multi-universality and (common) frequent universality respectively.
	
	\section{Preliminaries}\label{prelim}
	
	The aim of this section is to introduce once and for all, the notations and objects that will be used often in the paper. We start with general notations. The letter $\N$ will stand for the set $\{0,1,2,3,\ldots\}$ of all non-negative integers. A sequence of general terms $u_n$, $n\in \N$, will be denoted by $(u_n)_n$. If $P$ is any polynomial, we will denote by $\text{deg}(P)$ its degree and by $\text{val}(P)$ its valuation. If $E\subset \C$, the notation $\text{int}(E)$ will stand for the set of all interior points of $E$. For any $r\geq 0$ and $E\subset \C$, we will denote by $rE$ the set $\{rz:\, z\in E\}$. For $a\in \C$ and $r\geq 0$, the open disc centred at $a$ with radius $r$ will be denoted by $D(a,r)$.
	
	\medskip

	Let us now focus on more specific notations. In the whole paper, $\varrho=(r_n)_n$ denotes an arbitrary sequence in $[0,1)$ converging to $1$. For $r\in [0,1)$ we denote by $\phi_r$ the function defined by $z\mapsto rz$, $z\in \mathbb{C}$. Without possible confusions, we will use the same notation to denote a function defined in a set $E\subset \C$ and its restriction to a subset of $E$. Given a compact set $K\subset \T$, different from $\T$, we will denote by $\VV(K)$ a countable set of non-empty open sets defining the topology of $\CC(K)$. 
	
	With these notations, the definition of Abel universal functions can be reformulated as follows.
	
	\begin{defi}\label{defi-Abel-univ-D}A function $f \in H(\D)$ is called $\varrho$-Abel universal if for any compact set $K\subset \T$ different from $\T$, and any $V\in \VV(K)$ the set
		\[
		N_{f}(V,K,\varrho):=\{n\in \N:\,f\circ \phi_n \in V\}
		\]
		is non-empty (or equivalently infinite), where $\phi_n:=\phi_{r_n}$.
	\end{defi}
	
	We recall that the set $\UU_A(\D,\varrho)$ of $\varrho$-Abel universal functions is a dense $G_{\delta}$-subset of $H(\D)$ \cite{Charp}. Similarly, the definition of a Abel universal functions given in the introduction is equivalent to the following one.
	
	\begin{defi}A function $f\in H(\D)$ is called Abel universal if for any compact set $K\subset \T$ different from $\T$, and any $V\in \VV(K)$, the set
		\[
		N_f(V,K):=\{r\in [0,1):\,f\circ \phi_r\in V\}
		\]
		is non-empty.
	\end{defi}
	Clearly, $\UU_A(\D,\varrho)\subset \UU_A(\D)$, where $\UU_A(\D)$ is the set of all Abel universal functions. Note also that if $f\in \UU_A(\D)$, then $1$ is a limit point of $N_f(V,K)$, for any $V$ and $K$.
	
	\medskip
	
	We introduce the following technical notations that will be used in most of the proofs:
	
	\begin{itemize}\item$(\vp_n)_n$ denotes an enumeration of all polynomials with coefficients whose real and imaginary parts are rational.
		\item $(K_n)_n$ denotes a sequence of compact subsets of $\T$, with connected complement, such that for any compact set $K\subset \T$, different from $\T$, there exists $n\in \N$ such that $K\subset K_n$ (see for e.g. \cite{Charp} for the existence of such sequence).
		\item $(\e_n)_n$ denotes a decreasing sequence of positive real numbers such that $\sum_n\e_n < 1$. The speed of decrease of $(\e_n)_n$ may change from a proof to another, and will be specified if necessary.
		\item $\alpha,\beta : \N \to \N$ denote two functions such that for any $n,m\in \N$, there exists an increasing sequence $(n_l)_l\subset \N$ such that $\alpha(n_l)=n$ and $\beta(n_l)=m$ for any $l\in \N$.
	\end{itemize}
	
	All these notations will be tacitly used throughout the paper.
	
	\medskip
	
	The purpose of the next sections is twofold: first, to compare the sets $\UU_A(\D,\varrho)$ and $\UU_A(\D)$ with other classical sets of analytic functions in $\D$ with universal behaviour at the boundary; second, to study Abel universal functions, in particular with respect to classical notions coming from linear dynamics.

	\section{Abel universal functions among universal holomorphic functions}\label{sec-dist}
	
	For $f=\sum_ka_kz^k \in H(\D)$ and $n\in \N$, we denote by $S_n(f)$ the $n$th partial sum of the Taylor expansion of $f$ at $0$, i.e. $S_n(f)=\sum_{k=0}^na_kz^k$.
	
	\medskip
	
	Let us recall more explicitly the definitions of the classes of universal holomorphic functions that we intend to compare with that of Abel universal functions. The first one consists of universal Taylor series.
	
	\begin{defi}[Universal Taylor series]$\,$
		\begin{enumerate}	
			\item We denote by $\UU(\D)$ the set of those functions $f \in H(\D)$ which satisfy the following property: for any compact set $K\subset \C\setminus \D$ with connected complement, and any function $\varphi$ continuous on $K$ and holomorphic in its interior, there exists an increasing sequence $(\lambda_n)_n$ of integers such that
			\[
			\sup_{\zeta \in K}\left\vert S_{\lambda_n}(f)(\zeta) -\varphi (\zeta) \right\vert \to 0 \quad \text{as }n\to \infty.
			\]
			The elements of $\UU(\D)$ are called \textit{universal Taylor series}.
			\item We denote by $\UU(\D,\T)$ the set of those functions $f \in H(\D)$ which satisfy the following property: for any compact set $K\subset \T$, different from $\T$, and any function $\varphi$ continuous on $K$, there exists an increasing sequence $(\lambda_n)_n$ of integers such that
			\[
			\sup_{\zeta \in K}\left\vert S_{\lambda_n}(f)(\zeta) -\varphi (\zeta) \right\vert \to 0 \quad \text{as }n\to \infty.
			\]
			The elements of $\UU(\D,\T)$ will be called \textit{$\T$-universal Taylor series}.
		\end{enumerate}
	\end{defi}
	
	As already said, the radial limit of a function holomorphic in $\mathbb{D}$ at a boundary point along an increasing sequence $(r_n)_n$ tending to $1$ can be seen as the operation on the Taylor partial sums at $0$ of this function by a regular process of summation. Indeed one can write $f(r_nz)=\sum_{k} a_kr_n^kz^k=\sum_{k} c_{n,k} S_k(f)$ with $c_{n,k}=r_n^k(1-r_n)$. One can check that this process of summation is regular (see \cite{Zyg}). Now, it turns out that the universality of a holomorphic function in $\D$ is often preserved by the action of a summation process. It is in particular the case that the Ces\`aro means of the Taylor partial sums of an analytic function in $\D$ are universal if and only if the function is itself a universal Taylor series \cite{Bay1}; see \cite{CharMou,MouMun} for more general summation processes. Let us then introduce the following definition.
	\begin{defi}[Ces\`aro universal Taylor series]$\,$
		\begin{enumerate}	
			\item We denote by $\UU_{Ces}(\D)$ the set of those functions $f \in H(\D)$ which satisfy the following property: for any compact set $K\subset \C\setminus \D$ with connected complement, and any function $\varphi$ continuous on $K$ and holomorphic in its interior, there exists an increasing sequence $(\lambda_n)_n$ of integers such that
			\[
			\sup_{\zeta \in K}\left\vert \frac{1}{\lambda_n+1}\sum_{j=0}^{\lambda_n}S_{j}(f)(\zeta) -\varphi (\zeta) \right\vert \to 0 \quad \text{as }n\to \infty.
			\]
			The elements of $\UU_{Ces}(\D)$ are called \textit{Ces\`aro universal Taylor series}.
			\item We denote by $\UU_{Ces}(\D,\T)$ the set of those functions $f \in H(\D)$ which satisfy the following property: for any compact set $K\subset \T$, different from $\T$, and any function $\varphi$ continuous on $K$, there exists an increasing sequence $(\lambda_n)_n$ of integers such that
			\[
			\sup_{\zeta \in K}\left\vert \frac{1}{\lambda_n+1}\sum_{j=0}^{\lambda_n}S_{j}(f)(\zeta) -\varphi (\zeta) \right\vert \to 0 \quad \text{as }n\to \infty.
			\]
			The elements of $\UU_{Ces}(\D,\T)$ are called \textit{$\T$-Ces\`aro universal Taylor series}.
		\end{enumerate}
	\end{defi}
	
	The last class of universal functions is that of functions with maximal cluster set along any path to $\T$ (by definition, a path to $\T$ will always stand for a \textit{continuous} function $\gamma:[0,1)\to \D$ such that $\gamma(r)\to z$ for some $z\in \T$).
	
	\begin{defi}[Functions with maximal cluster sets along every path]We denote by $\UU_C(\D)$ the set of those functions $f \in H(\D)$ which satisfy the following property: for any path to $\T$, the cluster set $C_{\gamma}(f)$ of $f$ along $\gamma$ is maximal, i.e. equal to $\C$.
	\end{defi}
	
	All the sets introduced above are residual in $H(\D)$, so that they intersect each other. It is natural to wonder whether some non obvious inclusions may hold. So far, here is what is known (see \cite{Bay1,Charp,CharMou} for e.g.): for any $\varrho=(r_n)_n$,
	
	\begin{itemize}
		\item $\UU_{Ces}(\D)=\UU(\D) \subsetneq \UU(\D,\T) \subset \UU_{Ces}(\D,\T)$;
		\item $\UU_A(\D,\varrho) \subset \UU_A(\D) \subset \UU_C(\D)$;
		\item $\UU(\D)\setminus \UU_A(\D)\neq \emptyset$ and $\UU_A(\D,\varrho)\setminus \UU(\D) \neq \emptyset$. 
	\end{itemize}
	Note that the inclusions $\UU(\D) \subset\UU(\D,\T)$ and $\UU_A(\D,\varrho) \subset \UU_A(\D) \subset \UU_C(\D)$ are either trivial or obvious. Moreover, that $\UU(\D)\setminus \UU_C(\D)\neq \emptyset$ is a consequence of the existence of universal Taylor series that are Abel summable at some boundary points \cite{Cos}. The assertion $\UU_A(\D,\varrho)\setminus \UU(\D)\neq \emptyset$ was observed in \cite{Charp} using that functions in $\UU(\D)$ possess Ostrowski-gaps, while some functions in $\UU_A(\D,\varrho)$ may not have Ostrowski-gaps. We mention that functions in $\UU(\D,\T)$ may not have Ostrowski-gaps in general \cite{CharMou}. The aim of this section is to contribute in completing the description of the relationships between these classes. Precisely, we will prove the following.
	
	\begin{prop}\label{distinction-classes}For any $\varrho=(r_n)_n$, we have
		\begin{enumerate}[(a)]
			\item $\UU_A(\D,\varrho)\setminus \UU_{Ces}(\D,\T)\neq \emptyset$;
			\item\label{of(b)} $\UU_A(\D)\setminus \UU_A(\D,\varrho)\neq \emptyset$;
			\item $\UU_C(\D)\setminus \UU_A(\D)\neq \emptyset$.
		\end{enumerate}
		In particular, the sets $\UU_A(\D,\varrho)$ and $\UU(\D,\T)$ are uncomparable.
	\end{prop}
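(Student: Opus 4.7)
All three assertions are proved by explicit block constructions in the spirit of the iterative scheme for $\UU_A(\D,\varrho)$ from \cite{Charp}. The common skeleton writes $f=\sum_k P_k$ where the polynomials $P_k$ have successive disjoint Taylor supports, and at step $k$ Mergelyan's theorem on $r_{n_k}K_{\beta(k)}$ (which has connected complement when $K_{\beta(k)}$ does) furnishes $P_k$ of large enough valuation so that $(P_0+\cdots+P_k)\circ\phi_{r_{n_k}}$ is $\varepsilon_k$-close to $\vp_{\alpha(k)}$ on $K_{\beta(k)}$ and $P_k$ is negligible on $\overline{D}(0,r_{n_{k-1}})$. Summing summable errors yields $f\in H(\D)$, and the surjectivity properties of $\alpha,\beta$ applied to $(\vp_j,K_\ell)_{j,\ell}$ ensure $\varrho$-Abel universality (or, with variants, the appropriate membership in (b) and (c)).

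For (a), I fix $\zeta_0\in\T$ and additionally impose at each step the linear constraints $C_N(f)(\zeta_0)\in\R$ for even $N$ and $C_N(f)(\zeta_0)\in i\R$ for odd $N$, for $N$ in the index range of $P_k$, where $C_N(f)$ denotes the $N$-th Cesàro mean of the Taylor partial sums. Each such condition is a single real linear equation on the new Taylor coefficients, easily absorbed as a small perturbation within the Mergelyan step since the block provides roughly twice as many real degrees of freedom. Then $\{C_N(f)(\zeta_0):N\in\N\}\subset\R\cup i\R$ is not dense in $\C=\CC(\{\zeta_0\})$, so $f\notin\UU_{Ces}(\D,\T)$. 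Compatibility with $\varrho$-Abel universality at $\{\zeta_0\}$ stems from the identity $f(r\zeta_0)=(1-r)^2\sum_N(N+1)C_N(f)(\zeta_0)r^N$, which presents $f(r\zeta_0)$ as a convex average of the $C_N(f)(\zeta_0)$'s: since $\overline{\mathrm{conv}}(\R\cup i\R)=\C$ (one has $a+ib=\tfrac{1}{2}(2a)+\tfrac{1}{2}(2ib)$), the real parameters $\mathrm{Re}\,C_{2M}(\zeta_0)$ and $\mathrm{Im}\,C_{2M+1}(\zeta_0)$ can be tuned so that $f(r_n\zeta_0)$ still realizes any prescribed density pattern in $\C$. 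The main obstacle is precisely this simultaneous fulfillment of the Mergelyan approximation and the Cesàro constraints on each block, which requires a careful degree-of-freedom count.

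For (b), fix a sequence $\varrho'=(r'_m)_m\subset(0,1)\setminus\varrho$ tending to $1$ and a compact $K_0\subset\T$ with $K_0\neq\T$. Run the skeleton using $r'_{m_k}$ in place of $r_{n_k}$ for the approximation step, which yields $f\in\UU_A(\D,\varrho')\subset\UU_A(\D)$; simultaneously, require $\sup_{z\in K_0}|P_k(r_nz)|\leq\varepsilon_k'$ for every $n\leq N_k$, with $N_k\uparrow\infty$ and $\sum_k\varepsilon_k'<\infty$. By the standard estimate $|P_k(w)|\leq r_{N_k}^{\text{val}(P_k)}\|P_k\|_\T$ for $|w|\leq r_{N_k}$, a sufficiently large valuation secures the smallness on the smaller circles while $P_k$ remains non-trivial on $|z|=r'_{m_k}$, provided $r'_{m_k}$ is chosen close enough to $1$ compared to $r_{N_k}$. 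Summing gives $\sup_{z\in K_0}|f(r_nz)|\to 0$ as $n\to\infty$, so $(f\circ\phi_{r_n})|_{K_0}$ cannot approximate, say, the constant function $1$ on $K_0$: hence $f\notin\UU_A(\D,\varrho)$. The main obstacle is calibrating $\text{val}(P_k)$ to simultaneously achieve the smallness on $|z|=r_{N_k}$ and allow non-trivial approximation on the slightly larger $|z|=r'_{m_k}$.

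For (c), the construction combines a block scheme producing maximal cluster sets along a dense enumeration of paths to $\T$ with the additional requirement that $\sup_{z\in K_0}|f(rz)|\geq\delta$ for every $r\in[r^*,1)$, for some fixed compact $K_0\subset\T$ with $K_0\neq\T$ and fixed $\delta,r^*>0$. This uniform lower bound is achieved by incorporating into each $P_k$ a polynomial contribution with an angular peak at some $z_k\in K_0$ at scale $r_k\in[r^*,1)$, the pairs $(r_k,z_k)$ being chosen so that for every $r\in[r^*,1)$ there is a $k$ with $r_k\approx r$ and $|P_k(rz_k)|\geq\delta+\sum_{j\neq k}|P_j(rz_k)|$ (trivial cancellation being ensured by summability of the remainder norms). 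Then $(f\circ\phi_r)|_{K_0}$ cannot approximate the zero function in $\CC(K_0)$ for any $r\in[0,1)$ -- the dilates for $r<r^*$ being handled separately by maximum modulus -- so $f\notin\UU_A(\D)$. The main obstacle is reconciling the pointwise wildness of cluster sets at every $z\in K_0$ with the uniform lower bound on $K_0$: this is possible because the peak location shifts with $r$, so $f(rz)$ can still cluster densely in $\C$ along paths $r\to 1$ at every fixed $z\in K_0$ while $\sup_{z\in K_0}|f(rz)|$ stays bounded below. Finally, the uncomparability of $\UU_A(\D,\varrho)$ and $\UU(\D,\T)$ follows: (a) combined with $\UU(\D,\T)\subset\UU_{Ces}(\D,\T)$ gives $\UU_A(\D,\varrho)\setminus\UU(\D,\T)\neq\emptyset$, while the already known $\UU(\D)\setminus\UU_A(\D)\neq\emptyset$, together with $\UU(\D)\subset\UU(\D,\T)$ and $\UU_A(\D,\varrho)\subset\UU_A(\D)$, gives $\UU(\D,\T)\setminus\UU_A(\D,\varrho)\neq\emptyset$.
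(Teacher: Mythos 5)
Part (b) is the clearest problem: the step ``summing gives $\sup_{z\in K_0}|f(r_nz)|\to 0$'' fails. You only control each block $P_k$ on $r_nK_0$ for $n\le N_k$; for a fixed large $n$, the blocks with $N_k<n$ form an initial segment that grows with $n$, and each of them is completely uncontrolled on $r_nK_0$ (indeed $P_k$ \emph{cannot} be made small on all of $\bigcup_nr_nK_0$, whose closure contains $K_0$, without ruining the approximation of non-small targets on $r'_{m_k}K$ for the infinitely many $K=K_{\alpha(k)}\supset K_0$). So $f(r_n\cdot)|_{K_0}$ equals the uncontrolled sum $\sum_{k:\,N_k<n}P_k(r_n\cdot)$ up to a summable error, and nothing prevents its orbit from being dense. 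The paper's mechanism is genuinely different: it takes $K_0=\{1\}$ and imposes an \emph{active cancellation} $\vert P_n(r_n)+\sum_{j<n}P_j(r_n)\vert\le\e_n$, i.e.\ the new block is built (by Runge's theorem, the single point $r_n$ being disjoint from $\overline{D}(0,r'_{n-1})$ and from $r'_nK_{\alpha(n)}$) to annihilate the accumulated value of all previous blocks at the one point $r_n$. This is the missing idea; without it (b) does not go through.

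For (a), your obstruction (Ces\`aro means at $\zeta_0$ confined to the closed nowhere dense set $\R\cup i\R$) is sound and close in spirit to the paper's, which inserts correction polynomials $Q_n$ with coefficients in $\{0,2k\}$ forcing $\vert\sum_{l\le k}S_l(f)(1)\vert\ge k$, so that the Ces\`aro means at $1$ stay away from $0$. But the part you defer as ``the main obstacle'' is the entire proof, and it is not a degree-of-freedom count: making $C_N(f)(\zeta_0)$ exactly real or purely imaginary requires a correction of order $N\vert C_N\vert$ on the $N$th coefficient --- not a small perturbation --- so one must check both that the corrected series still defines an element of $H(\D)$ and that the corrections contribute $o(1)$ to $f(r_{n_k}\cdot)$ on $K$ (this is precisely the role of the paper's conditions $\vert b_{k,n-1}\vert\le 2k$ and $H_{u_n}(r_n)\le\e_n$); moreover the constraint must be imposed at \emph{every} index $N$, including those between consecutive Mergelyan blocks where your $P_k$'s have no coefficients --- there the partial sums are constant, so the Ces\`aro means drift along segments you do not control and could a priori be dense (the paper fills these ranges with the $Q_n$'s). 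Part (c) and the concluding uncomparability argument are fine in outline --- your obstruction, a uniform lower bound for $\sup_{K_0}\vert f(r\cdot)\vert$ preventing approximation of $0$, differs from the paper's, which forces $f$ to be small alternately at $r\zeta_1$ or $r\zeta_2$ so that $f(r\cdot)|_{\{\zeta_1,\zeta_2\}}$ never approximates the constant $1$ --- but the compatibility of your moving peaks with maximal cluster sets along \emph{every} path is asserted rather than proved.
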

	In passing, we can deduce (from (a) and $\UU_A(\D)\subset \UU_C(\D)$) that $\UU_C(\D)\setminus \UU_{Ces}(\D,\T)\neq \emptyset$. However, it is still an open question whether $\UU_{Ces}(\D,\T)$ is included in $\UU(\D,\T)$ or not (i.e. whether $\UU_{Ces}(\D,\T)=\UU(\D,\T)$ or not).

	\begin{proof}[Proof of Proposition \ref{distinction-classes}]
		
		We first prove (a). Let $(R_n)_n\subset [0,1)$ be such that $0<R_n<r_n<R_{n+1}<r_{n+1}<1$, $n\in\N$.
		Let us define, for every $0<r<1$ and $k\geq 0$,
		\begin{equation}\label{inequ_fond}
		H_k(r)=\sum_{j\geq k}h_j(r)\quad \hbox{ where }\quad h_j(r):=\sum_{i=j}^{+\infty}2ir^i .
		\end{equation}
		Observe that $H_k(r)\rightarrow 0$ as $k$ tends to infinity. 
		We choose $u_1>0$ such that 
		$$\max\left(H_{u_{1}}(r_{1}),\frac{(r_1/R_{2})^{u_1}}{1-r_1/R_{2}}\right)\leq \e_1.$$
		By Mergelyan's theorem we find $P_1=\sum\limits_{i\geq u_1}a_{i,1}z^i$ so that 
		$$\sup_{\vert z\vert\leq R_1}\vert P_1(z)\vert\leq \e_1\quad 
		\hbox{ and }\quad \sup_{z\in K_{\alpha(1)}}|P_1(r_1z)-\vp_{\beta(1)}(z)|\leq \e_1.$$
		Let us build by induction sequences $(P_n)_n$ and $(Q_n)_n$ of polynomials and an increasing sequence $(u_n)_n$ of integers as follows. Let $Q_0=0$ and suppose that $P_1,\dots,P_{n-1}$, $u_1,\dots,u_{n-1}$, and  $Q_0,\dots,Q_{n-2}$ have been chosen for $n\geq 2$. We shall write these polynomials under the following form 
		$$\hbox{for } k=2,\dots n-2,\quad P_k=\sum_{i=u_k}^{u_{k+1}-1}a_{i,k}z^i,\quad Q_{k}=\sum_{i=u_{k}}^{u_{k+1}-1}b_{i,k}z^i,\quad 
				\hbox{ with }u_k\geq\deg(P_{k-1})+1,$$
				and
				$$P_{n-1}=\sum_{i=u_{n-1}}^{\deg(P_{n-1})}a_{i,n-1}z^i,\hbox{ with }u_{n-1}\geq\deg(P_{n-2})+1,$$
			where the coefficients $a_{i,j}$ and $b_{i,j}$ will satisfy suitable conditions that will be specified in the induction.	
		We are going to construct $u_n$, $Q_{n-1}$ and $P_n$ in this order. First choose $u_n\geq 1+ \deg(P_{n-1})$ satisfying 	
		\begin{enumerate}
			\item \label{item3_c}$\max\left(H_{u_{n}}(r_{n}),
			\frac{(r_n/R_{n+1})^{u_n}}{1-r_n/R_{n+1}}\right)\leq \e_n$.
		\end{enumerate}
		Set $a_{i,n-1}=0$ for $i=\deg(P_{n-1})+1,\dots,u_n-1$ which implies  $P_{n-1}=\sum\limits_{i=u_{n-1}}^{u_n-1}a_{i,n-1}z^i.$ Let us consider  $w_{n-2}(z)=\sum\limits_{j=1}^{n-2}(P_j+Q_j)(z)$, $z\in \C$. Then we define $Q_{n-1}=\sum\limits_{i= u_{n-1}}^{u_n-1}b_{i,n-1}z^i$, where the coefficients $b_{i,n-1}$ for $u_{n-1}\leq i\leq u_n-1$, are built by induction as follows. We first set
		\[
		b_{u_{n-1},n-1}:=\left\{\begin{array}{ll}
		0 & \text{if } \vert \sum\limits_{l=1}^{u_{n-1}}S_l(w_{n-2})(1) + a_{u_{n-1},n-1}\vert \geq u_{n-1}\\
		2u_{n-1} & \text{otherwise}.
		\end{array}\right.
		\]
		Then, once $b_{u_{n-1},n-1},b_{u_{n-1}+1,n-1},\ldots, b_{k-1,n-1}$ have been built for some $u_{n-1}+1\leq k\leq u_n-1$, we set
		\begin{itemize}
			
			\item $b_{k,n-1}=0$ if we have \\
			$\left\vert \sum\limits_{l=1}^{k}S_l(w_{n-2})(1) + \sum\limits_{l=u_{n-1}}^{k-1}\sum\limits_{i=u_{n-1}}^{l}(a_{i,n-1}+b_{i,n-1}) + \sum\limits_{i=u_{n-1}}^{k-1}(a_{i,n-1}+b_{i,n-1}) + a_{k,n-1}\right\vert \geq k,$
			\item $b_{k,n-1}=2k$ otherwise.
		\end{itemize} 
		Doing so, we obtain for all $n\geq 2$ and for all $k=u_{n-1},\dots,u_n-1$,
		\begin{enumerate}\setcounter{enumi}{1} 
			\item \label{itemcoef1_c} $\vert \sum_{l=1}^{k}S_l(\sum_{j=1}^{n-1}(P_j+Q_j))(1) \vert  \geq k$;
			\item \label{itemcoef2_c} $|b_{k,n-1}|\leq 2k$.
		\end{enumerate} 
		Then apply Mergelyan's theorem to get $P_n=\sum\limits_{i\geq u_n}a_{i,n}z^i$ so that the following conditions hold:
		\begin{enumerate}\setcounter{enumi}{3}
			\item \label{item4_c} $\sup_{|z|\leq R_n} | P_n(z) | \leq \e_n$;
			\item \label{item6_c} $\sup_{z\in K_{\alpha(n)}}|P_n(r_nz)-\left(\vp_{\beta(n)}(z)-\sum_{0\leq j\leq n-1}(P_j+Q_j)(r_nz)\right)|\leq \e_n$.
		\end{enumerate}

		Finally, we set $f=\sum\limits_{n\geq 1}(P_n+Q_n)$ and get from \eqref{item4_c} and \eqref{itemcoef2_c} that $f\in H(\D)$. Moreover, it is clear from \eqref{itemcoef1_c} that $|\sum\limits_{l=1}^{k}S_l(f)(1)|\geq k$ for all $k\in \N$, and therefore $f\not\in \UU_{Ces}(\D,\T)$. The proof will be completed once we have proven that $f\in \UU_A(\D,\varrho)$. To do this, we will use the equality (\ref{inequ_fond}). Let us fix $n,m\in \N$ and let $(n_l)_l\subset \N$ be such that for any $l\in \N$, $\alpha(n_l)=n$ and $\beta(n_l)=m$. By \eqref{item6_c} we have, for any $\zeta \in K_n$,
		\[
		\left|f(r_{n_l}\zeta)-\vp_m(\zeta)\right| \leq \e_{n_l} + \left|Q_{n_l}(r_{n_l}\zeta)\right| + \left|\sum _{j\geq n_l+1} \left(P_j(r_{n_l}\zeta)+Q_j(r_{n_l}\zeta)\right)\right|.
		\]
		Moreover it follows from \eqref{itemcoef2_c}, (\ref{inequ_fond}) and \eqref{item3_c} that for any $\zeta \in K_{n}$,
		\begin{equation*}
		\left|Q_{n_l}(r_{n_l}\zeta)\right| \leq \sum_{i\geq u_{n_l}}2ir_{n_l}^i = h_{u_{n_l}}(r_{n_l})\leq H_{u_{n_l}}(r_{n_l})\leq \e_{n_l}.
		\end{equation*}
		Now, let us observe by \eqref{item4_c} and Cauchy's inequalities that we have $\left|a_{i,j}\right|\leq \e_{j}R_{j}^{-i}$ for any $j\geq 1$ and any $u_j \leq i \leq u_{j+1}-1$, whence by \eqref{item3_c}, (\ref{inequ_fond}) and \eqref{itemcoef2_c},
		
		\begin{eqnarray*}
			\left|\sum_{j\geq n_l+1} \left(P_j(r_{n_l}\zeta)+Q_j(r_{n_l}\zeta)\right)\right| & \leq & 
			\sum_{j\geq n_l+1}\sum_{i\geq u_{j}}\e_{j}\left(\frac{r_{n_l}}{R_{j}}\right)^i+ \sum_{j\geq n_l+1}\sum_{i\geq u_{j}}2ir_{n_l}^i
			\\& \leq & 
			\sum_{j\geq n_l+1}\e_{j}\left(\frac{r_{n_l}}{R_{n_l+1}}\right)^{u_{n_l}}\frac{1}{1-r_{n_l}/R_{n_l+1}}+
			\sum_{j\geq n_l+1} h_{u_j}(r_{n_l})
			\\&\leq & 
			\sum_{j\geq n_l+1}\e_{j}+H_{u_{n_l}}(r_{n_l})
			\\& \leq & \sum_{j\geq n_l+1}\e_{j} + \varepsilon_{n_l}.
		\end{eqnarray*}
		
		Altogether we get
		\[
		\sup_{\zeta\in K_n}\left|f(r_{n_l}\zeta) -\vp_m (\zeta) \right|\rightarrow 0\quad\text{as }l\to \infty,
		\]
		and thus $f\in \UU_A(\D,\varrho)$.
		
		\medskip

		\medskip
		
		Let us prove (b). Let $\varrho'=\left(r_n'\right)_n\subset [0,1)$ be an increasing sequence such that $r_n < r'_n < r_{n+1}$, $n\in \N$. Using Runge's theorem, we build by induction a sequence of polynomials $P_n$ such that
		\begin{enumerate}
			\item $\sup_{|z|\leq r'_{n-1}}\vert P_n(z)\vert\leq \varepsilon_n$;
			\item $\vert P_n(r_n) + \sum_{0\leq j\leq n-1} P_j(r_n)\vert\leq \varepsilon_n$;
			\item $\sup_{\zeta \in K_{\alpha(n)}}\left\vert P_n(r'_n\zeta) - (\varphi_{\beta(n)}(\zeta) - \sum_{0\leq j\leq n-1} P_j(r'_n\zeta))\right\vert\leq \varepsilon_n$.
		\end{enumerate}
		Then we set $f=\sum _{j\geq 0}P_j$. We get from (1) that $f\in H(\D)$. Let us now check that $f\in \UU_A(\D)\setminus \UU_A(\D,\varrho)$. It is first clear from (1) and (2) that $f\not \in \UU_A(\D,\varrho)$ since $|f(r_n)|\leq \sum_k\varepsilon_k$.
		Now, let us fix $n,m\in \N$ and let $(n_l)_l\subset \N$ be increasing such that for any $l\in \N$, $\alpha (n_l)=n$ and $\beta(n_l)=m$ for any $l\in \N$.
		Then, by (1) and (3), for any $l\in \N$ and any $\zeta \in K_{n}$,
		\[
		\left|f\left(r'_{n_l}\zeta\right) - \vp_m(\zeta) \right| \leq \e_{n_l} + \sum _{j\geq n_l+1}\left|P_j\left(r'_{n_l}\zeta\right)\right| \leq \sum _{j\geq n_l}\e_j,
		\]
		which goes to $0$ as $l\to \infty$. Thus $f\in \UU_A(\D,\varrho')\subset \UU_A(\D)$.
		
		\medskip
		
		
		We turn to proving (c). For $\zeta \in \T$ and $A\subset [0,1)$ we will denote by $A\zeta$ the set $\{r\zeta:\,r\in A\}$. Let us fix $\zeta_1\neq \zeta_2$ in $\T$, and let $(a_n)$ be an increasing sequence in $[0,1)$, tending to $1$. Note that for any $r\in [a_0,1)$ and any $K\subset \T$ containing $\zeta_1$ and $\zeta_2$, we have
		\[
		rK\cap \bigcup_n([a_{2n},a_{2n+1}]\zeta_1\cup [a_{2n+1},a_{2n+2}]\zeta_2) \neq \emptyset.
		\]
		We also fix a sequence $(\eta_n)$ of positive real numbers such that
		\[
		\eta_n < \min(a_{2n+2}-a_{2n+1},a_{2n+3}-a_{2n+2},|\zeta_1-\zeta_2|/(2\pi)),\quad n\in \N.
		\]
		Remark that $\eta_n \to 0$ as $n\to \infty$.
		Let us now consider, for $n\in \N$, the compact set $L_n$ defined  by
		\[
		L_n=\overline{D(0,a_{2n+2}+\eta_n)}\setminus D((a_{2n+2}+\eta_n)\zeta_1,2\eta_n).
		\]
		The sequence $(L_n)$ is clearly an exhaustion of $\D$ by compact sets with connected complement. Moreover, we have for any $n\in \N$ and any $m>n$,
		\[
		\text{int}(L_n) \supset [a_{2n},a_{2n+1}]\zeta_1 \cup [a_{2n+1},a_{2n+2}]\zeta_2\quad \text{and}\quad L_n\cap ([a_{2m},a_{2m+1}]\zeta_1\cup [a_{2m+1},a_{2m+2}]\zeta_2)=\emptyset.
		\]
		Then, for any $I\subset \T$, we denote by $C(I)$ the cone $\{r\zeta:\,r\in [0,1),\,\zeta \in I\}$ and by $C_n(I)$ the set $C(I)\cap \partial L_n$.
		Thus, for any $n\geq 1$, the sets $L_{n-1}$, $[a_{2n},a_{2n+1}]\zeta_1\cup [a_{2n+1},a_{2n+2}]\zeta_2$ and $C_n(K_{\alpha(n)})$ are pairwise disjoint (and with connected complement), so we can apply Runge's theorem to build a sequence $(P_n)$ of polynomials (with $P_0=0$), satisfying for any $n\geq 1$,
		\begin{enumerate}
			\item $\sup_{z\in L_{n-1}}\vert P_n(z)\vert\leq \varepsilon_n$;
			\item $\sup_{z\in [a_{2n},a_{2n+1}]\zeta_1\cup [a_{2n+1},a_{2n+2}]\zeta_2}\vert P_n (z) + \sum_{0\leq j\leq n-1} P_j(z)\vert\leq \varepsilon_n$;
			\item $\sup_{z\in C_n(K_{\alpha(n)})}\left\vert P_n(z) - (\varphi_{\beta(n)}(z/|z|) - \sum_{0\leq j\leq n-1} P_j(z))\right\vert\leq \varepsilon_n$.
		\end{enumerate}
		We set $f=\sum_{n\geq 0}P_n$. By (1), $f\in H(\D)$. Moreover $f\not \in \UU_A(\D)$. Indeed, let us consider the compact set $K=\{\zeta_1\}\cup\{\zeta_2\}$ and fix $r\in [0,1)$. Then there exists $n\in \N$ such that
		\[
		rK\cap ([a_{2n},a_{2n+1}]\zeta_1\cup [a_{2n+1},a_{2n+2}]\zeta_2)\neq \emptyset.
		\]
		So, by (1) and (2), there exists $i\in \{1,2\}$ such that $|f(r\zeta_i)|\leq \sum_k\e_k$. This proves that $f\not \in \UU_A(\D)$.
		
		It remains to check that $f\in \UU_C(\D)$. To do so, we will first prove that $f$ has some universality property at the boundary, with respect to $(\partial L_n)_n$. More precisely, let us fix $n,m\in \N$  and let $(n_l)_l\subset \N$ be increasing such that $\alpha(n_l)=n$ and $\beta(n_l)=m$ for any $l$. Then, by (1) and (3), for any $l\in \N$ and any $z \in C_{n_l}(K_n)$,
		\[
		\left|f\left(z\right) - \vp_m\left(\frac{z}{|z|}\right) \right| \leq \e_{n_l} + \sum _{j\geq n_l+1}\left|P_j(z)\right| \leq \sum _{j\geq n_l}\e_j,
		\]
		which goes to $0$ as $l\to \infty$.
		Now, let $\gamma$ be any continuous path to some $\zeta \in \T$ and let $c\in \C$. By construction, if $I\subset \T$ with $\zeta \in I$ is an arc, then $\gamma$ intersects all but finitely many $C_k(I)$. In particular, $\gamma$ intersects all $C_{n_l}(K_n)$, $l\in \N$, for some $n\in \N$ and some increasing sequence $(n_l)_l\subset \N$ so that
		\[
		\sup_{z\in \CC_{n_l}(K_n)}\left|f\left(z\right) - c \right| \to 0,\quad \text{ as } l\to \infty.
		\]
		Thus, if $(z_{n_l})_l$ is any sequence so that $z_{n_l}\in \gamma\cap C_{n_l}(K_n)$, we have $z_{n_l}\to \zeta$ and $f(z_{n_l})\to c$ as $l\to \infty$. This completes the proof that $f\in \UU_C(\D)$. 
	\end{proof}
	
	\begin{rem}{\rm
			The proof of (\ref{of(b)}) above shows that if $\varrho=(r_n)_n$ and $\varrho'=(r'_n)_n$ are such that $r_n=r'_n$ only for finitely many $n$, then the sets $\UU_A(\D,\varrho)$ and $\UU_A(\D,\varrho')$ are uncomparable.
		}
	\end{rem}

	\section{Derivatives of Abel universal functions}\label{sec-deriv}
	
	One of the most important open questions on universal Taylor series is to know whether $f'\in \UU(\D)$ whenever $f\in \UU(\D)$. In this section, we prove that the same question for Abel universal functions has a negative answer. Next, we also show that the answer to this question is "no" for a natural class of universal Taylor series which contains $\UU(\D)$ and is contained in $\UU(\D,\T)$. In the next results, we denote by $f^{(l)}$ the $l$th derivative of $f\in H(\D)$.
	
	\begin{thm}\label{derivatives-Abel}For any $\varrho$ and any $l\geq 1$, there exists $f\in \UU_A(\D,\varrho)$ such that $f^{(l)}\not\in \UU_A(\D,\varrho)$.
	\end{thm}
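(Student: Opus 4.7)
The plan is to build $f\in \UU_A(\D,\varrho)$ with $f^{(l)}(r_n\zeta_0)\to 0$ for some fixed $\zeta_0\in\T$; testing against a nonzero constant on $K_0:=\{\zeta_0\}$ then gives $f^{(l)}\notin \UU_A(\D,\varrho)$. Without loss of generality assume the values $r_n$ are strictly increasing; fix $\zeta_0\in\T$, set $R_n:=(r_n+r_{n+1})/2$, and pick $(\e_n)$ with $\sum_n\e_n<\infty$ and $\e_j\le (r_j-r_{j-1})^l/(2^{j+l}l!)$. One builds $f=\sum_n P_n$ by induction so that each $P_n$ satisfies the usual disc estimate, the usual arc approximation estimate and the extra derivative identity $P_n^{(l)}(r_n\zeta_0)=-\sum_{j<n}P_j^{(l)}(r_n\zeta_0)$; this forces $T_n:=\sum_{j\le n}P_j^{(l)}(r_n\zeta_0)=0$ for every $n$.

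The inductive step has two parts. First, Mergelyan's theorem on $E_n:=\overline{D(0,R_{n-1})}\cup r_nK_{\alpha(n)}$ (which has connected complement since $R_{n-1}<r_n$ and $K_{\alpha(n)}\neq \T$) yields $\tilde P_n$ with $\|\tilde P_n\|_{\overline{D(0,R_{n-1})}}\le \e_n/2$ and $\sup_{\zeta\in K_{\alpha(n)}}|\tilde P_n(r_n\zeta)-(\vp_{\beta(n)}(\zeta)-w_{n-1}(r_n\zeta))|\le \e_n/2$, where $w_{n-1}:=\sum_{j<n}P_j$. Second, one adds a correction $C_n:=\lambda_n(z/r_n)^{N_n}$ to pin down the derivative at $r_n\zeta_0$. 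The monomial $M_N(z):=(z/r_n)^N$ satisfies $\|M_N\|_{r_nK_{\alpha(n)}}\le 1$ and $\|M_N\|_{\overline{D(0,R_{n-1})}}\le (R_{n-1}/r_n)^N$, while $M_N^{(l)}(r_n\zeta_0)=\frac{N!}{(N-l)!}\zeta_0^{N-l}r_n^{-l}$ grows like $N^l$. Choosing $\lambda_n$ so that $C_n^{(l)}(r_n\zeta_0)=c_n:=-\tilde P_n^{(l)}(r_n\zeta_0)-\sum_{j<n}P_j^{(l)}(r_n\zeta_0)$ makes $|\lambda_n|$ of order $|c_n|r_n^l/N_n^l$, so taking $N_n$ large enough forces $\|C_n\|_{E_n}\le \e_n/2$. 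Then $P_n:=\tilde P_n+C_n$ satisfies the disc and arc estimates up to $\e_n$ and achieves $T_n=0$.

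Setting $f=\sum_n P_n$, the disc estimates give $f\in H(\D)$ and the arc estimates give $f\in\UU_A(\D,\varrho)$ by the standard density argument using $(K_n)$, $(\vp_n)$ and the surjectivity of $(\alpha,\beta)$, exactly as in the proofs of Proposition~\ref{distinction-classes}. For the derivative, Cauchy's inequalities applied to $P_j$ on $\overline{D(0,R_{j-1})}$, together with the bound $R_{j-1}-r_n\ge (r_j-r_{j-1})/2$ valid for $n<j$, yield $|P_j^{(l)}(r_n\zeta_0)|\le l!\e_j/((r_j-r_{j-1})/2)^l\le 2^{-j}$; hence $|f^{(l)}(r_n\zeta_0)|=|T_n+\sum_{j>n}P_j^{(l)}(r_n\zeta_0)|\le 2^{-n}\to 0$. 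The main obstacle is that $r_n\zeta_0$ may well lie on the arc $r_nK_{\alpha(n)}$ (when $\zeta_0\in K_{\alpha(n)}$), so one cannot decouple the derivative constraint by applying Runge on three disjoint compacta; this is where the monomial trick is essential, as it exploits the fact that polynomials uniformly bounded on a circular arc can have arbitrarily large complex derivatives at arc points (a sharpness feature of Bernstein's inequality).
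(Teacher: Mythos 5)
Your proof is correct, and it resolves the key difficulty by a genuinely different device from the paper's. Both arguments share the same skeleton: an inductive block construction $f=\sum_n P_n$ with a disc estimate and an arc approximation at scale $r_n$, the standard density argument for $f\in\UU_A(\D,\varrho)$, the reduction to a strictly increasing $\varrho$ (legitimate, since $\UU_A(\D,\varrho)$ depends only on the set $\{r_n:n\in\N\}$, whose sole accumulation point is $1$), and the observation that boundedness of $f^{(l)}(r_n\zeta_0)$ kills universality on the singleton $\{\zeta_0\}$. The paper isolates the derivative control in Lemma \ref{lem-deriv-Abel}: it applies Mergelyan's theorem to a target that is \emph{constant} on a two-dimensional neighbourhood of the segment $[r_n,1]$ and $0$ near $\overline{D(0,r_{n-1})}$, so that Weierstrass' theorem makes $P_n^{(l)}$ uniformly small on all of $[0,r_{n-1}]\cup[r_n,1)$; summing then gives $|f^{(l)}(r_k)|\le\sum_n\e_n$ for every $k$ with no quantitative tuning of $(\e_n)$. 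You instead prescribe $P_n^{(l)}(r_n\zeta_0)$ \emph{exactly} by the monomial correction $\lambda_n(z/r_n)^{N_n}$ --- exploiting that $(z/r_n)^{N}$ has modulus at most $1$ on $\overline{D(0,r_n)}$ while its $l$-th derivative at $r_n\zeta_0$ grows like $N^l$ --- so that the partial sums' derivatives telescope to $0$ at $r_n\zeta_0$, and you handle the tail $\sum_{j>n}P_j^{(l)}(r_n\zeta_0)$ by Cauchy estimates, which is where the choice $\e_j\le (r_j-r_{j-1})^l/(2^{j+l}l!)$ is genuinely needed. Your route avoids the case distinction of Lemma \ref{lem-deriv-Abel} (it is indifferent to whether $\zeta_0\in K_{\alpha(n)}$), at the price of tying $(\e_j)$ to the gaps of $\varrho$; the paper's route buys the stronger conclusion that each $P_n^{(l)}$ is small on the whole punctured radius rather than only at the points $r_k\zeta_0$. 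All steps of your argument check out, including the identity $\|C_n\|_{E_n}=|\lambda_n|$ and the bound $|P_j^{(l)}(r_n\zeta_0)|\le 2^{-j}$ for $j>n$.
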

	
	In order to make the presentation simple, we shall extract the main ingredient of the construction as an independent lemma.
	
	\begin{lemme}\label{lem-deriv-Abel}Let $K$ be a compact subset of $\T$, different from $\T$, and let $\vp$ be a polynomial. For any $\e>0$, any $l\geq 1$ and any $r'<r\in [0,1)$, there exists a polynomial $P$ such that
	\begin{enumerate}
		\item \label{itemder1} $\sup_{|z|\leq r'}|P(z)|\leq \e$;
		\item \label{itemder2} $\sup_{[0,r']\cup[r,1)}|P^{(l)}(z)|\leq \e$;
		\item \label{itemder3} $\sup_{z\in rK}|P(z)-\vp(z)|\leq \e$
	\end{enumerate}
	\end{lemme}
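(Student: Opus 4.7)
The plan is a single application of Mergelyan's theorem to a carefully chosen compact set $E\subset \C$ with connected complement, from which the bounds on $|P|$ over $\overline{D(0,r')}$ and on $|P-\vp|$ over $rK$ follow directly, while the bound on $|P^{(l)}|$ over $[0,r']\cup[r,1)$ is extracted from Cauchy's inequalities.

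Fix $s\in(r',r)$ and a small $\eta>0$ to be specified at the end. Let $F\subset\C$ be a closed neighborhood of the segment $[r,1]$, thin enough (essentially $F=\{z:\dist(z,[r,1])\leq \eta\}$) to be disjoint from $\overline{D(0,s)}$ and, up to the point $r$, from $rK$. Set
\[
E:=\overline{D(0,s)}\cup rK\cup F.
\]
Since $K\neq \T$, the arc $rK$ does not separate the plane, and one checks that $\C\setminus E$ is connected. Define a target $g:E\to \C$ by $g\equiv 0$ on $\overline{D(0,s)}$, $g=\vp$ on $rK$, and $g\equiv c$ on $F$, where $c=\vp(r)$ if $r\in F\cap rK$ and $c=0$ otherwise. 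Then $g$ is continuous on $E$ and holomorphic on $\text{int}(E)$ (each connected component of which carries a constant value of $g$), so by Mergelyan's theorem, for any $\delta>0$ there exists a polynomial $P$ with $\sup_E|P-g|\leq \delta$.

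The estimates $|P|\leq \delta$ on $\overline{D(0,s)}\supset \overline{D(0,r')}$ and $|P-\vp|\leq \delta$ on $rK$ give \eqref{itemder1} and \eqref{itemder3} at once. For \eqref{itemder2}, $P$ is bounded by $\delta$ on $\overline{D(0,s)}$, so Cauchy's estimate in the disk $D(x,s-r')\subset \overline{D(0,s)}$ for $x\in[0,r']$ gives $|P^{(l)}(x)|\leq l!\,\delta/(s-r')^l$; similarly, since $c$ is constant one has $(P-c)^{(l)}=P^{(l)}$, and $|P-c|\leq \delta$ on $F$ together with $D(x,\eta)\subset F$ for $x\in[r,1)$ yields $|P^{(l)}(x)|\leq l!\,\delta/\eta^l$. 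Choosing $\delta$ smaller than $\min\bigl(\e,\,\e(s-r')^l/l!,\,\e\eta^l/l!\bigr)$ concludes the argument.

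The main obstacle is the local geometry of $F$ near $r$ in the case $1\in K$: the arc $rK$ then passes through $r$, and any symmetric $\eta$-tube around $[r,1]$ meets $rK$ along a small arc, creating a continuity conflict between $\vp$ on $rK$ and the constant $c=\vp(r)$ on $F$. The remedy is to take $F$ as a "teardrop" lying in $\{|z|\geq r\}$ and tangent to $|z|=r$ only at $r$; an extra continuity argument is then needed to recover the Cauchy bound for $|P^{(l)}(x)|$ as $x\to r$. The overall strategy---Mergelyan's theorem plus Cauchy's inequalities---is unchanged.
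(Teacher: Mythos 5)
Your overall strategy (Mergelyan on a union of a slightly enlarged disc, $rK$, and a closed neighbourhood of $[r,1]$, followed by Cauchy/Weierstrass estimates for the $l$th derivative) is exactly the paper's, and your treatment of the easy case $rK\cap[0,1)=\emptyset$ is fine. The problem is the case $1\in K$, which you correctly flag but do not actually resolve. Your proposed remedy --- replacing the tube $F$ by a ``teardrop'' meeting the circle $\{|z|=r\}$ only at the point $r$ --- restores the continuity of the target function $g$, but it destroys the derivative estimate precisely where it is needed: for $x\in[r,1)$ close to $r$, the largest disc centred at $x$ and contained in the teardrop has radius tending to $0$ as $x\to r$, so Cauchy's inequality only gives $|P^{(l)}(x)|\leq l!\,\delta/\rho(x)^{l}$ with $\rho(x)\to 0$, which is not a uniform bound. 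The ``extra continuity argument'' you invoke cannot fill this hole: continuity of $P^{(l)}$ propagates no bound from $[r+\tau,1)$ back to $[r,r+\tau]$, and the construction gives no control whatsoever on $P^{(l)}(r)$ itself (only on $P(r)$). So condition (2) of the lemma is not established near $r$.

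The paper's fix goes the other way: keep $F=C_{\delta}=\{z:\dist(z,[r,1])\leq\delta\}$ a full tube, so that $[r,1]$ sits at uniform distance $\delta$ from $\partial C_\delta$ and Weierstrass/Cauchy applies uniformly, and instead perturb the boundary data on $rK$. Namely, using uniform continuity of $\vp$, one replaces $\vp$ by a continuous $\tilde\vp$ on $\overline{D(0,r'+\delta)}\cup rK\cup C_\delta$ which is $0$ on the disc, equal to the constant $\vp(r)$ on all of $C_\delta$ (hence on the small arc $rK\cap C_\delta$), and within $\e/2$ of $\vp$ on $rK$. This removes the continuity conflict at the cost of an extra $\e/2$ in condition (3), while $\tilde\vp$ is locally constant on the interior of the compact set, so Mergelyan applies and the approximating polynomials have $l$th derivatives tending to $0$ uniformly on $[0,r']\cup[r,1]$. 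You should adopt this modification (perturb $\vp$, not the tube) to close the gap.
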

	
	\begin{proof}If $rK\cap [0,1)=\emptyset$, it is enough to apply Mergelyan's theorem on $rK\cup \overline{V}\cup \overline{D(0,r')}$, where $V$ is any neighbourhood of $[0,1]$ with $\overline{V}\cap rK=\emptyset$, and then Weierstrass' theorem.
		
		Let us now focus on the case where $rK\cap [0,1)=\{r\}$ (note that this intersection cannot contain more than $1$ point). For $\delta>0$, let us denote $C_{\delta}:=\{z\in \C:\,\text{dist}(z,[r,1])\leq \delta\}$. Since $r'<r$ and by uniform continuity of $\vp$ on $rK$, there exist $\delta>0$ such that $\overline{D(0,r'+\delta)}\cap C_\delta=\emptyset$, and a function $\tilde{\vp}$ continuous on $\overline{D(0,r'+\delta)}\cup rK\cup C_{\delta}$, 
		such that
		\begin{equation}\label{eqderivlemm1}
		\sup_{z\in rK}|\vp(z)-\tilde{\vp}(z)|\leq \e/2 \quad \text{and} \quad \tilde{\vp}(z)= \left\{\begin{array}{lll} \vp(r) & \text{if } & z\in C_{\delta}\\
		0 & \text{if } & z\in \overline{D(0,r'+\delta)}
		\end{array}\right..
		\end{equation}
		Note that $\overline{D(0,r'+\delta)}\cup rK\cup C_{\delta}$ has connected complement and that $\tilde{\vp}$ is continuous on this set and holomorphic in its interior (since it is constant there). Now, by Mergelyan's theorem, there exists a sequence $(P_n)_n$ of polynomials tending to $\tilde{\vp}$ uniformly on $\overline{D(0,r'+\delta)}\cup rK\cup C_{\delta}$. In particular, by Weierstrass' theorem, $P^{(l)}_n\to 0$ uniformly on $[0,r']\cup [r,1]$, whence we conclude that there exists $n_0\in \N$ such that $P_{n_0}$ satisfies \eqref{itemder1}, \eqref{itemder2} and \eqref{itemder3} in the statement.
	\end{proof}
	
	\begin{proof}[Proof of Theorem \ref{derivatives-Abel}]Let us fix $l\geq 1$. We let $\varrho=(r_n)_n$ and set $r_{-1}=0$. Without loss of generality we shall assume that $(r_n)_n$ is increasing. Using Lemma \ref{lem-deriv-Abel}, we build by induction polynomial $P_n$, $n\in \N$, such that
		\begin{enumerate}
			\item $\sup_{|z|\leq r_{n-1}}\vert P_n (z) \vert \leq \e_n$;
			\item $\sup_{z\in [0,r_{n-1}]\cup[r_n,1)}\vert P^{(l)}_n(z) \vert \leq \e_n$;
			\item $\sup_{z \in K_{\alpha(n)}}\left\vert P_n (r_nz)-(\varphi_{\beta(n)}(z) - \sum_{0\leq j\leq n-1} P_j(r_nz)) \right\vert \leq \e_n$.
		\end{enumerate}
		By (1), $f:=\sum_nP_n \in H(\D)$ and, by (2), for any $n\in \N$ and any $k\geq 0$, $\vert P_n^{(l)}(r_k)\vert\leq \e_n$, so $\vert f^{(l)}(r_k)\vert=\sum_n \vert P_n^{(l)}(r_k)\vert \leq \sum_n \e_n <1$.
		In particular, $f^{(l)} \not \in \UU_A(\D,\varrho)$. The proof that $f\in \UU_A(\D,\varrho)$ is now quite standard. We fix $n,m \in \N$ and let $(n_k)_k\subset \N$ such that $\alpha(n_k)=n$ and $\beta(n_k)=m$ for any $k\in \N$. By construction, we have
		\begin{eqnarray*}\sup_{z\in K_n}|f(r_{n_k}z) - \vp_m(z)| & \leq & \e_{n_k} + \sum _{j>n_k} \vert P_j (r_{n_k}z)\vert  \\
			& \leq & \e_{n_k} +\sum _{j>n_k}\e_j
		\end{eqnarray*}
		which goes to $0$ as $k\to \infty$.
	\end{proof}
	
	\medskip
	
	Let us come back to the original open question whether the derivative of functions in $\UU(\D)$ are also in $\UU(\D)$. This question is still open but the following result asserts that there exist functions in $\UU(\D,\T)$ whose derivative is not in $\UU(\D,\T)$.
	
	\begin{thm}\label{derivU-D-T}For any $l\geq 1$, there exists $f\in \UU(\D,\T)$ such that $f^{(l)}\not \in \UU(\D,\T)$.
	\end{thm}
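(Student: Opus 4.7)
The plan is to construct $f = \sum_{n\geq 1}(P_n+Q_n) \in H(\D)$ following the blueprint of Proposition~\ref{distinction-classes}(a), combining a Mergelyan-based universal approximation in each $P_n$ with an iteratively chosen correcting polynomial $Q_n$. We will force the invariant $|T_k|\geq 1$ for all $k \geq u_1$, where $T_k:=(S_k(f))^{(l)}(1)$. Writing $c_i$ for the $i$-th Taylor coefficient of $f$ and $i^{(l)}:=i(i-1)\cdots(i-l+1)$, this means $T_k = \sum_{i=l}^k i^{(l)} c_i$. Since $S_N(f^{(l)})(1) = T_{N+l}$, this invariant will ensure that no partial sum of $f^{(l)}$ comes within distance $1$ of $0$ at the point $1$, so taking $K=\{1\}$ and $\vp\equiv 0$ yields $f^{(l)} \not\in \UU(\D,\T)$.

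For the inductive step, suppose $P_j, Q_j$ have been built for $j<n$, both supported in $[u_j, u_{j+1}-1]$, with the invariant $|T_k| \geq 1$ for $u_1 \leq k \leq u_n-1$. Apply Mergelyan's theorem to the compact set $\overline{D(0,\rho_n)} \cup K_{\alpha(n)}$, with target $0$ on the disk and $\vp_{\beta(n)} - \sum_{j<n}(P_j+Q_j)$ on $K_{\alpha(n)}$, using the standard $z^{u_n}$-shift trick to obtain $P_n = \sum_{i=u_n}^{d_n} a_{i,n} z^i$ satisfying $\|P_n\|_{\overline{D(0,\rho_n)}}\leq \e_n$ and $\|P_n+\sum_{j<n}(P_j+Q_j)-\vp_{\beta(n)}\|_{K_{\alpha(n)}}\leq \e_n$. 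Pick $u_{n+1}>d_n$ and construct $Q_n = \sum_{i=u_n}^{u_{n+1}-1} b_{i,n} z^i$ iteratively: for $i=u_n, u_n+1, \ldots, u_{n+1}-1$ in order, let $w_i := T_{i-1} + i^{(l)} a_{i,n}$ (with the convention $a_{i,n}=0$ for $i > d_n$); set $b_{i,n}=0$ if $|w_i|\geq 1$, and otherwise set $b_{i,n}$ with $|b_{i,n}|=2/i^{(l)}$ (any argument), so that by the reverse triangle inequality $|T_i|=|w_i+i^{(l)}b_{i,n}|\geq 2-|w_i|\geq 1$. The invariant is thus extended through the new block.

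To see $f\in\UU(\D,\T)$, note that $|b_{i,n}|\leq 2/i^{(l)} \leq 2^{l+1}/i^l$ for $i \geq 2l$, hence $\|Q_n\|_\T \leq 2^{l+1}\sum_{i=u_n}^{u_{n+1}-1} i^{-l}$. For $l\geq 2$ this is $O(u_n^{-(l-1)})\to 0$; for $l=1$, it is $O(\log(u_{n+1}/u_n))$, which we make tend to $0$ by choosing $u_{n+1}/u_n$ close to $1$ (e.g.\ $u_{n+1}=u_n+\lfloor u_n/n^2\rfloor$ with $u_n$ growing fast enough). Combined with the Mergelyan bound, $\|S_{u_{n+1}-1}(f)-\vp_{\beta(n)}\|_{K_{\alpha(n)}}\leq \e_n+\|Q_n\|_\T \to 0$ along any subsequence $(n_l)$ with $\alpha(n_l)=m_0,\ \beta(n_l)=n_0$; since the $\{\vp_n\}$ are dense in $\CC(K_{m_0})$, the usual density argument yields $f\in\UU(\D,\T)$. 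Holomorphy of $f$ on $\D$ follows from the summability of $\|P_n\|_{\overline{D(0,\rho_n)}}$ (controlled by $\e_n$) and the fast decay of the correcting coefficients on every $\overline{D(0,r)}\subset\D$.

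The main technical subtlety is the case $l=1$, where $|b_{i,n}|$ decays only as $1/i$ and hence $\sum_{i=u_n}^{u_{n+1}-1}|b_{i,n}|$ grows like $\log(u_{n+1}/u_n)$; one must constrain $u_{n+1}/u_n\to 1$ quickly enough to keep $\|Q_n\|_\T$ tending to $0$, while still allowing Mergelyan (whose required degree $d_n-u_n$ depends on $\e_n$ and $\rho_n$) to produce $P_n$ of degree at most $u_{n+1}-1$. This is resolved by running Mergelyan first to determine $d_n$, then picking $u_{n+1}$ a posteriori just large enough to accommodate $d_n$ while preserving the desired decay of $u_{n+1}/u_n-1$.
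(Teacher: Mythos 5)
Your construction is a genuinely different route from the paper's. The paper starts from an \emph{arbitrary} $f=\sum_k a_kz^k\in\UU(\D,\T)$ and perturbs its coefficients by real numbers $\alpha_k$ with $|\alpha_k|\leq (k(k-1))^{-1}$ so that $\Re\bigl(S_n(g^{(l)})(1)\bigr)\in\Z$ for all $n$; since $\sum_k\alpha_k\zeta^k$ converges uniformly on $\T$, the perturbed function stays in $\UU(\D,\T)$ while its $l$th derivative cannot be universal at $1$. Your approach instead builds $f$ from scratch in the style of Proposition \ref{distinction-classes}(a), with correcting blocks $Q_n$ enforcing $|T_k|\geq 1$. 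For $l\geq 2$ your argument is sound: the corrections satisfy $|b_{i,n}|\leq 2/i^{(l)}=O(i^{-l})$, so $\|Q_n\|_\T=O(u_n^{-(l-1)})\to 0$ and everything goes through.

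For $l=1$, however, there is a genuine gap. You need $\|Q_n\|_\T\to 0$, and since $|b_{i,n}|$ can be as large as $2/i$ at every index of the block $[u_n,\deg(P_n)]$, you get only $\|Q_n\|_\T=O(\log(\deg(P_n)/u_n))$. You propose to keep this small by taking $u_{n+1}/u_n\to 1$, but this is incompatible with the requirement $u_{n+1}>\deg(P_n)$: Mergelyan's theorem gives no control of $\deg(P_n)$ in terms of $u_n$, and the valuation-$u_n$ shift trick forces the intermediate precision to be of order $\e_n\rho_n^{u_n}/u_n$, which typically makes $\deg(P_n)/u_n$ blow up. Your closing remark that one can pick $u_{n+1}$ ``just large enough to accommodate $d_n$ while preserving the desired decay of $u_{n+1}/u_n-1$'' asks for two contradictory things simultaneously; the ratio $d_n/u_n$ is simply not at your disposal. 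The gap is easily repaired the way the paper does it: the case $l=1$ follows \emph{indirectly} from the case $l=2$, since if differentiation mapped $\UU(\D,\T)$ into itself then so would twofold differentiation, contradicting what you have already proved. With that one additional (non-constructive) step your proof is complete.
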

	
	\begin{proof}Let us first prove the result for $l=2$. Let $f=\sum_ka_kz^k \in \UU(\D,\T)$. For any $k$, let the real number $\alpha_k$ be such that for any $k\geq 0$,
		\[
		\Re((k+2)(k+1)(a_{k+2}+\alpha_{k+2}))=\max\{N\in \Z:\, N\leq \Re((k+2)(k+1)a_{k+2})\}.
		\]
		Then for any $k\geq 2$,
		\[
		\vert \alpha_k\vert \leq (k(k-1))^{-1}.
		\]
		In particular $\sum_k \alpha_k\zeta ^k$ is uniformly convergent for any $\zeta \in \T$, and thus the series $g:=\sum_k(a_k+\alpha_k)z^k$ still belongs to $\UU(\D,\T)$. However $g^{(2)}$ cannot be universal at $1$, since $S_n(g^{(2)})(1)\in \mathbb{Z}$ for any $n\in \N$.
		
		We can proceed similarly for any $l\geq 3$. The details are left to the reader. Now, if it were true that $f'\in \UU(\D,\T)$ whenever $f\in \UU(\D,\T)$, then $f^{(2)}$ would also belong to $\UU(\D,\T)$ for any $f\in \UU(\D,\T)$. That would contradict the first part of the proof.
	\end{proof}
	
	In fact, the previous result can be improved a little bit. Let us introduce another type of universal Taylor series. We will denote by $\D_R$ the open disc $D(0,R)$.
	
	\begin{defi}Let $R\geq 1$. We denote by $\UU(\D,R)$ the set of those functions $f\in H(\D)$ satisfying that, for any compact set $K_1\subset \C\setminus \overline{\D_R}$ and any compact set $K_2 \subset \overline{\D_R}\setminus \D$ both with connected complement, and for any function $\vp \in A(K_1\cup K_2)$, there exists an increasing sequence $(\lambda_n)_n\subset \N$ such that
		\[
		\sup_{z\in K_1 \cup K_2}|S_{\lambda_n}(f)(z) - \vp(z)|\to 0\quad \text{as }n\to \infty.
		\]
	\end{defi}
	
	Clearly, we have
	\[
	\UU(\D)=\bigcap_{R\geq 1}\UU(\D,R)\subset \UU(\D,R) \subset \UU(\D,\T)
	\]
	for any $R\geq 1$. Now, let us comment on the case $R=1$ which we think is of particular interest. Indeed, one can observe that
	\[
	\UU(\D,1)=\UU_0(\D)\cap \UU(\D,\T),
	\]
	where $\UU_0(\D)$ is the set consisting of those functions $f$ in $H(\D)$ that are universal for $\mathcal{S}^K$ for any compact set $K\subset \C\setminus \overline{\D}$ with connected complement. The difference between $\UU(\D)$ and $\UU_0(\D)$ is that universal approximation by partial Taylor sums of $f$ in $\UU_0(\D)$ are \emph{a priori}  valid only on compact subsets in $\C\setminus \D$ that do not touch $\T$. The class $\UU(\D)$ is obviously contained in $\UU_0(\D)$ and it is also well-known that both are distinct  (for e.g. \cite{MNP}). In fact, this class $\UU_0(\D)$ was already shown to be nonempty in the early 1970's by Chui and Parnes \cite{Chui} (see also \cite{Luh}). Using that the locally uniform convergence of a sequence of holomorphic functions implies the locally uniform convergence of the sequence of the derivatives, one can easily show that if $f$ belongs to $\UU_0(\D)$ then so does $f'$ (see also \cite[Proposition 3.5]{Costakis2}). Thus $\UU(\D,1)$ is the intersection of a class which is stable under differentiation and of a class that is not, according to Theorem \ref{derivU-D-T}. The following improvement tells us that the set $\UU(\D,R)$ is not stable under differentiation either, for any $R\geq 1$.
	
	\begin{thm}\label{derivative}\label{thm-derivative-UTS-R}For any $R\geq 1$ and any $l\geq 1$, there exists $f\in \UU(\D,R)$ such that $f^{(l)}\not \in \UU(\D,R)$.
	\end{thm}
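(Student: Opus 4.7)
Since $\UU(\D,R)\subset\UU(\D,\T)$, it suffices to produce $f\in\UU(\D,R)$ with $f^{(l)}\notin\UU(\D,\T)$. Following the strategy of Theorem~\ref{derivU-D-T}, I will arrange the coefficients of $f=\sum_{k\geq 0}a_kz^k$ so that $\Re\bigl((k-l+1)(k-l+2)\cdots k\,a_k\bigr)\in\Z$ for every $k\geq l$. Then $\Re(S_n(f^{(l)})(1))\in\Z$ for all $n$, so $S_n(f^{(l)})(1)$ stays at distance $\geq 1/2$ from $i/2$, and $f^{(l)}$ cannot approximate the constant $\vp\equiv i/2$ on $K=\{1\}\subset\T$, yielding $f^{(l)}\notin\UU(\D,\T)\supset\UU(\D,R)$.

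Unlike in Theorem~\ref{derivU-D-T}, where an arbitrary $f_0\in\UU(\D,\T)$ can simply be perturbed by $\sum\alpha_kz^k$ with $\alpha_k=O(1/k^l)$ (continuous on $\T$), the class $\UU(\D,R)$ involves compact sets $K_1\subset\C\setminus\overline{\D_R}$ on which the partial sums of such a perturbation (whose radius of convergence is only $1$) blow up, ruining the universal approximation. I therefore build $f=\sum_{n\geq 1}P_n$ directly by induction, in the spirit of Proposition~\ref{distinction-classes}(a) and Theorem~\ref{derivatives-Abel}, with $P_n$ a polynomial of valuation $\geq d_{n-1}$ and degree $<d_n$ for a strictly increasing $(d_n)$ to be chosen. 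Let $(K_n)$ enumerate a cofinal family of admissible compact sets $K=K_1\cup K_2$ with connected complement, and let $\alpha,\beta$ be as in the Preliminaries. At stage $n$, Mergelyan on $\overline{\D_{r_n}}\cup K_{\alpha(n)}$ (which has connected complement) produces a polynomial $\tilde P_n$ of valuation $\geq d_{n-1}$ that is $\varepsilon_n/2$-small on $\overline{\D_{r_n}}$ and $\varepsilon_n/2$-close to $\vp_{\beta(n)}-(P_1+\cdots+P_{n-1})$ on $K_{\alpha(n)}$ (the valuation condition is ensured by subtracting the low-degree part of $\tilde P_n$, which is small by Cauchy's estimates).

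To enforce the lattice condition, I then obtain $P_n$ from $\tilde P_n$ by replacing, at each degree $k\in[d_{n-1},d_n)$ with $k\geq l$, the real part of the coefficient of $z^k$ by the nearest point of the lattice $\bigl((k-l+1)\cdots k\bigr)^{-1}\Z$. The adjustment at degree $k$ has modulus at most $\bigl(2(k-l+1)\cdots k\bigr)^{-1}=O(1/k^l)$, so the additional error on $K_{\alpha(n)}$ is bounded by a constant times $M_n^{d_n}/d_n^l$, where $M_n=\max_{z\in K_{\alpha(n)}}|z|$. To absorb this into $\varepsilon_n/2$, I let $\varepsilon_n$ decay very fast (for instance $\varepsilon_n=2^{-2^n}$) and choose $d_n$ of order $\log(1/\varepsilon_n)/\log M_n$; since the approximand extends holomorphically to a neighborhood of each connected component of $\overline{\D_{r_n}}\cup K_{\alpha(n)}$ (by taking $0$ on a neighborhood of $\overline{\D_{r_n}}$ and the relevant polynomial on a neighborhood of $K_{\alpha(n)}$), a Bernstein--Walsh type estimate makes exponentially fast polynomial approximation available, and the Mergelyan step fits inside the permitted window.

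The main obstacle is exactly this two-sided balancing: on the outside piece $K_{1,\alpha(n)}$ the lattice-rounding error grows geometrically in the block degree, whereas the Mergelyan step needs enough room for an $\varepsilon_n$-approximation. Once it is handled, the rest is routine: $f=\sum_nP_n\in H(\D)$ by the smallness of $P_n$ on $\overline{\D_{r_n}}$; the enumeration property of $(\alpha,\beta)$ together with the tail estimate $\sum_{j>n}\|P_j\|_{K_{\alpha(n)}}\leq \sum_{j>n}\varepsilon_j\to 0$ gives $f\in\UU(\D,R)$; and the coefficient-lattice constraint yields $\Re(S_n(f^{(l)})(1))\in\Z$ for every $n$, completing the proof.
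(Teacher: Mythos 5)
There is a genuine gap, and it sits exactly at the point you identify as ``the main obstacle'': the lattice--rounding error on compact sets lying outside $\overline{\D}$ cannot be absorbed by any choice of the parameters. Rounding the coefficient of $z^k$ to the lattice $\bigl((k-l+1)\cdots k\bigr)^{-1}\Z$ forces a perturbation $\alpha_k$ whose only available bound is half the lattice spacing, $|\alpha_k|\le \tfrac12\bigl((k-l+1)\cdots k\bigr)^{-1}\asymp k^{-l}$. On a compact set containing a point of modulus $M>1$ (and every such set must occur infinitely often as $K_{\alpha(n)}$), the block of corrections with degrees in $[d_{n-1},d_n)$ contributes an error of order $\sum_{k=d_{n-1}}^{d_n-1}M^k k^{-l}\ge M^{d_{n-1}}/(2d_{n-1}^l)$, which tends to infinity as $d_{n-1}\to\infty$ since $M>1$. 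No tolerance $\varepsilon_n<1$ can dominate this once $d_{n-1}$ is large, and your own parameter choice is internally inconsistent: with $d_n\sim\log(1/\varepsilon_n)/\log M_n$ one gets $M_n^{d_n}\approx 1/\varepsilon_n$, hence $M_n^{d_n}/d_n^l\approx \varepsilon_n^{-1}d_n^{-l}\gg\varepsilon_n$. Bernstein--Walsh is beside the point, because the difficulty is not the degree needed by Mergelyan but the growth of the forced perturbation with the degree. The same divergence already occurs on $\overline{\D_R}\setminus\D$ when $R>1$, since $\sum_k k^{-l}R^k=\infty$; your reduction to non-membership in $\UU(\D,\T)$ (lattice at $z=1$) gives away precisely the room needed to fix this. (Two smaller points: avoiding $i/2$ does not follow from $\Re(S_n(f^{(l)})(1))\in\Z$, since $\Re(i/2)=0\in\Z$ --- you want a target such as $1/2$; and the case $l=1$, where $\sum k^{-1}$ diverges even on $\T$, is not addressed.)

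The paper's proof resolves both problems with two devices absent from your proposal. First, the lattice condition is imposed at the point $z=R$ rather than on $\T$: one requires $\Re\bigl(k(k-1)\cdots(k-l+1)R^{\,k-l}(a_k+\alpha_k)\bigr)\in\Z$, so that $|\alpha_k|\le R^{\,l-k}/\bigl(k\cdots(k-l+1)\bigr)$ and the correction series $\sum_n R_n$ converges \emph{absolutely on} $\overline{\D_R}$; this preserves universality on $\overline{\D_R}\setminus\D$. Second, the universality on compact sets $K_1\subset\C\setminus\overline{\D_R}$ is carried by a \emph{separate} series $\sum_n Q_n$ whose coefficients satisfy $|b_k|\le R^{-k}k^{-(l+2)}$ (Lemma \ref{Runge-controle-ak}); these coefficients are never rounded, their contribution to $S_j(f^{(l)})(R)$ is absolutely convergent and hence harmless, and the lattice-corrected part $\sum_n(P_n+R_n)$ is never asked to approximate anything outside $\overline{\D_R}$. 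The case $l=1$ is then handled by a contradiction/induction argument from $l=2$. Without the weight $R^{\,k-l}$ and without the splitting $f=\sum Q_n+\sum P_n+\sum R_n$, the construction you describe cannot produce a function in $\UU(\D,R)$.
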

	
	We will make use of the following lemma.
	
	\begin{lemme}\label{Runge-controle-ak}Let $R\geq 1$, $K\subset \C\setminus \overline{\D_R}$ with connected complement and $\vp \in A(K)$. For any $\e>0$, any $l\in \N$ and any $N\in\N$, there exists a polynomial $P=\sum_{k\geq N} a_k z^k$ with $|a_k|R^k\leq k^{-(l+2)}$ such that
		\[
		\sup_{|z|\leq R}\vert P(z) \vert\leq \e \quad \text{and}\quad \sup_{z\in K}\vert P(z)-\vp(z) \vert\leq \e.
		\]
	\end{lemme}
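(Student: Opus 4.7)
The plan is to realize $P$ as $z^N Q(z)$ for a polynomial $Q$ obtained by a single application of Mergelyan's theorem on a cleverly enlarged compact set, so that the three conditions are met simultaneously. First I would choose $R'\in(R,\dist(0,K))$ so that $\overline{\D_{R'}}$ strictly contains $\overline{\D_R}$ and is still disjoint from $K$; this is possible because $K\subset\C\setminus\overline{\D_R}$ is compact. The set $L:=\overline{\D_{R'}}\cup K$ then has connected complement: indeed $\overline{\D_{R'}}$ is a closed disk disjoint from $K$, and $K$ has connected complement in $\C$ so it encloses no point of $\overline{\D_{R'}}$; hence every point outside $L$ can be joined to infinity while avoiding both pieces.

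Assuming $N\geq 1$ (the case $N=0$ being a simpler version of the same argument), define $\tilde\psi$ on $L$ by $\tilde\psi\equiv 0$ on $\overline{\D_{R'}}$ and $\tilde\psi(z)=\vp(z)/z^N$ on $K$. Since $0\notin K$ and $\vp\in A(K)$, $\tilde\psi$ is continuous on $L$ and holomorphic in $\operatorname{int}(L)$. Mergelyan's theorem then yields, for every $\eta>0$, a polynomial $Q=\sum_{j\geq 0}q_j z^j$ with $\|Q-\tilde\psi\|_L\leq\eta$. Setting $P(z):=z^N Q(z)$ one gets $P=\sum_{k\geq N}a_k z^k$ with $a_k=q_{k-N}$. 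Conclusion \eqref{itemder1} in the analogous numbering would follow from $|P(z)|\leq R^N\eta$ on $\overline{\D_R}$, and the approximation on $K$ from $|P(z)-\vp(z)|=|z|^N|Q(z)-\vp(z)/z^N|\leq M^N\eta$, where $M:=\max_{z\in K}|z|$. For the coefficient bound, Cauchy's inequalities applied on $\overline{\D_{R'}}$ give $|q_j|\leq\eta(R')^{-j}$, whence
\[
|a_k|R^k \;\leq\; \eta (R')^N\left(\frac{R}{R'}\right)^k \qquad \text{for all } k\geq N.
\]
Since $R/R'<1$, the sequence $k\mapsto k^{l+2}(R/R')^k$ tends to $0$, so its supremum $C:=\sup_{k\geq N}k^{l+2}(R/R')^k$ is a finite positive constant. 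Choosing $\eta$ so small that simultaneously $R^N\eta\leq\e$, $M^N\eta\leq\e$, and $(R')^N C\,\eta\leq 1$ produces a polynomial $P$ meeting every requirement.

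The main obstacle is that a direct application of Mergelyan on $\overline{\D_R}\cup K$ controls $|a_k|R^k$ only up to a constant (via a single Cauchy estimate on $\overline{\D_R}$), which is far weaker than the polynomial decay $k^{-(l+2)}$ demanded by the statement. The resolution is to enforce smallness on the slightly bigger disk $\overline{\D_{R'}}$: Cauchy's inequalities then upgrade mere boundedness into an exponential decay $(R/R')^k$ of $|a_k|R^k$, which easily absorbs any polynomial weight $k^{l+2}$. The factoring $P=z^N Q$ imposes the required valuation at essentially no cost, since $0\notin K$ keeps the approximation on $K$ comparable to that of $Q$.
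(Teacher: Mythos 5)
Your proof is correct and follows essentially the same route as the paper's: apply Mergelyan on the union of $K$ with a disc of radius slightly larger than $R$, then use Cauchy's inequalities on that larger disc so that the exponential decay $(R/R')^k$ absorbs the polynomial weight $k^{l+2}$. The only (cosmetic) difference is that you absorb the resulting constant by shrinking the Mergelyan tolerance $\eta$, whereas the paper instead raises the valuation threshold to some $M\geq N$ for which $\e k^{l+2}(R/(R+\eta))^{k}\leq 1$ holds for all $k\geq M$.
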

	
	\begin{proof}Let $\e$, $l$, $N$, $K$, $R$ and $\vp$ be fixed as in the lemma. Let $\eta>0$ be such that $0<\eta < \text{dist}(\D_R,K)$ and let $M\geq N$ be an integer such that $\frac{\e k^{l+2}R^k}{(R+\eta)^k}\leq 1$ for any $k\geq M$. Then Mergelyan's theorem gives the existence of a polynomial $P=\sum _{k\geq M}a_kz^k$ such that
		\[
		\sup_{|z|\leq R+\eta} \vert P(z) \vert\leq \e \quad \text{and}\quad \sup_{z\in K}\vert P(z)-\vp(z) \vert\leq \e.
		\]
		By Cauchy's inequalities, we deduce from the first inequality that for any $k\geq M$,
		\[
		|a_k|\leq \frac{\e}{(R+\eta)^k}
		\]
		This gives the conclusion by the choice of $M$.
	\end{proof}
	
	\begin{proof}[Proof of Theorem \ref{derivative}]
		Let us first fix $R\geq 1$ and $l\geq 2$ as in the statement. We keep the notations used in the previous proofs and let $(L_n)_n$ be an enumeration of compact sets in $(\overline{\D_R}\setminus \D)\times (\C\setminus \overline{\D_R})$, with connected complement, such that for any compact sets $K_1\subset \overline{\D_R}\setminus \D$ and any $K_2\subset \C\setminus \overline{\D_R}$, both with connected complement, there exists $n$ such that $K_1\cup K_2 \subset L_n$ (the proof of the existence of such sequence is straighforward and omitted). We also let $\psi_1:\N\to \N$, $\psi_2:\N\to \N$ and $\psi_3:\N\to \N$, such that for any $n_1,n_2,m \in \N$ there exists infinitely many $n\in \N$ such that $(\vp_{n_1},\vp_{n_2},L_m)=(\vp_{\psi_1(n)},\vp_{\psi_2(n)},L_{\psi_3(n)})$. Let $Q_0=P_0=0$ and $R_0=0$. We build by induction three sequences $(P_n=\sum a_k(n)z^k)_n$, $(Q_n=\sum b_k(n)z^k)_n$ and $(R_n)_n$ of polynomials (using Runge's theorem for $(P_n)_n$ and Lemma \ref{Runge-controle-ak} for $(Q_n)_n$), in the following order $Q_n\rightarrow P_n\rightarrow R_n$, with $R_n=\sum\limits_{k=\text{val}(P_n)}^{\deg(P_n)}\alpha_k z^k$, such that
		\begin{enumerate}
			\item $\text{val}(Q_n)> \text{deg}(P_{n-1})$;
			\item $\sup_{|z|\leq R}\vert Q_n(z) \vert \leq \e_n$;
			\item $\sup_{z\in L_{\psi_3(n)}\cap (\C\setminus \overline{\D_R})}\vert Q_n(z) - \left(\vp_{\psi_2(n)}(z) - \sum _{k=0}^{n-1} (Q_k + P_k + R_k)(z) \right)\vert \leq \e_n$;
			\item$|b_k(n)|\leq R^{-k}k^{-(l+2)}$ for any $n,k$;
			\item $\sup_{|z|\leq 1-1/n}\vert P_n(z) \vert \leq \e_n$;
			\item $\sup_{z\in L_{\psi_3(n)}\cap (\overline{\D_R}\setminus \D)}\vert P_n(z) - (\vp_{\psi_1(n)}(z) - \sum _{k=0}^{n-1} P_k(z))\vert \leq \e_n$;
			\item $\text{val}(P_n)> \text{deg}(Q_n)$;
			\item $\Re(k(k-1)\ldots(k-l+2)(k-l+1)R^{k-l}(a_{k}(n)+\alpha_{k}))$ is the integer part of $\Re(k(k-1)\ldots(k-l+2)(k-l+1)R^{k-l}a_{k}(n))$ for $k=\text{val}(P_n),\dots,\deg(P_n)$.
		\end{enumerate}
		
		We set $f=\sum _n Q_n +\sum _n P_n + \sum _{n} R_n$. Because of (2), (5) and (8), $f \in H(\D)$ and $\sum _{n} R_n$ is absolutely convergent on $\overline{\D_R}$ (indeed, (8) implies $|\alpha_k|\leq R^{-k}(k(k-1))^{-1}$).
		
		Moreover, (6) implies that $\{S_j(\sum_n P_n)|_K:\,j\in \N\}$ is dense in $A(K)$ for any $K\subset \overline{\D_R}\setminus \D$ with connected complement. By (8) and (4) respectively, $\sum _{n} R_n$ and the Taylor expansion at $0$ of $\sum _n Q_n$  are absolutely convergent on $\overline{\D_R}$, so $\{S_j(f)|_K:\,j\in \N\}$ is also dense in $A(K)$  for any $K\subset \overline{\D_R}\setminus \D$ with connected complement.
		
		Now, (3) and (7) imply that $\{S_j(f)|_K:\,j\in \N\}$ is dense in $A(K)$ for any $K\subset \C\setminus \overline{\D_R}$ with connected complement.
		
		Last, let us observe that by (8) the set $\{S_j\left((\sum_n P_n+\sum_n R_n)^{(l)}\right)(R):\,j\in \N\}$ is contained in $\Z$. Now, since by (4) the Taylor series of the $l$th derivative of $\sum Q_n$ is absolutely convergent on $\overline{\D_R}$, the set $\{S_j(f^{(l)})(R):\,j\in \N\}$ cannot be dense in $\C$. In particular, $f^{(l)}\not \in \UU(\D,R)$ and, by induction, there must be some $f\in \UU(\D,R)$ such that $f'\not\in \UU(\D,R)$.
	\end{proof}
	
	\medskip
	
	Since the important question whether $\UU(\D)$ is invariant under differentiation is still open, the last result makes the following question interesting.
	\begin{quest}Does the set $\UU(\D)$ coincide with $\UU(\D,R)$ for some $R\geq 1$?
	\end{quest}

	\section{Abel multi-universality}\label{multi}
	
	In this section, we are interested in the concept of multi-universality which comes from that of multi-hypercyclicity. Given a Fréchet space $X$ and a continuous operator $T:X\rightarrow X$, a finite subset $\{x_1,\dots,x_l\}$ of $X$ is said to be multi-hypercyclic for $T$ if the set $\bigcup_{k=1}^l\{T^nx_k:n\in\mathbb{N}\}$ is dense in $X$. Costakis \cite{Costa} and Peris \cite{Peris} independently proved that if $\{x_1,\dots,x_l\}$ is multi-hypercyclic for $T$ then one of the $x_i$'s is hypercyclic for $T$. Let us extend this notion to the setting of universality.

	\begin{defi}\label{defi-CP-1}
		Let $X$ and $Y$ be two Fréchet spaces and let $T_n:X\rightarrow Y$, $n\in\mathbb{N}$, be continuous linear maps. A finite set	$\{x_1,\dots,x_l\}$ of $X$ is said to be multi-universal for $(T_n)_n$ if $\bigcup_{k=1}^l\{T_nx_k:n\in\mathbb{N}\}$ is dense in $Y$.
	\end{defi}

	The concept of multi-universality was recently studied in the case of Fekete universal series \cite{MouArch}. In this section, we are going to investigate it for Abel universal functions. For $K\subset \T$ and $\varrho =(r_n)_n \subset [0,1)$ increasing and converging to $1$, let us denote as in the introduction by $\TT_{\varrho}^K$ the sequence $(T_{\varrho,n}^K)_n$ of continuous operators from $H(\D)$ to $\CC(K)$ defined, for any $n\in \N$, by $T_{\varrho,n}^K(f)=f_{r_n}|_K$.
	\begin{defi}\label{defi-CP-2}A set $\{f_1,\dots,f_l\}\in H(\D)$ is said to be \textit{$\varrho$-Abel multi-universal} if, for any $K\subset \T$ different from $\T$, it is multi-universal for the sequence $\TT_{\varrho}^K$.
	\end{defi}
	Equivalently, $\{f_1,\dots,f_l\}\in H(\D)$ is \textit{$\varrho$-Abel multi-universal} if for any compact set $K\subset\mathbb{T}$, different from $\mathbb{T}$, and any function $h\in \CC(\T)$, there exist $i\in\{1,\dots,l\}$ and an increasing sequence $(\lambda_n)_n\subset \N$ such that
	\[
	\sup_{z\in K}\vert f_i(r_{\lambda_n}z)-h(z)\vert \rightarrow 0 \quad \text{as } n\rightarrow \infty.
	\]
	In the same spirit, we can introduce the following definition.
	\begin{defi}A set $\{f_1,\dots,f_l\}\in H(\D)$ is said to be \textit{Abel multi-universal} if for any compact set $K\subset\mathbb{T}$, different from $\mathbb{T}$, and any function $h\in \CC(\T)$, there exist $i\in\{1,\dots,l\}$ and an increasing sequence of positive real numbers $(r_n)_n$ tending to $1$ such that $\sup_{z\in K}\vert f_i(r_{n}z)-h(z)\vert \rightarrow 0,$ as $n\rightarrow +\infty$.
	\end{defi}
	The following question naturally arises:
	\begin{quest}\label{quest-CP}If $\{f_1,\dots,f_l\}$ is $\varrho$-Abel multi-universal (resp. Abel multi-universal), does there exist $i\in\{1,\dots,l\}$ such that $f_i$ is a $\varrho$-Abel universal functions (resp. Abel universal functions)?
	\end{quest}
	We shall see that this question has a negative answer. To do this, inspired by \cite{MouArch}, we first construct a function $f$ in $H(\mathbb{D})$ such that for any compact set $K\subset \T$ containing $1$ and different from $\T$, the following two conditions hold:
	\begin{enumerate}[(a)]\item $T_{\varrho,n}^K(f)(1)$ is off some non-empty disc;
		\item the closure of $\{T_{\varrho,n}^K(f)|_K:\,n\in \N\}$ in $\CC(K)$ contains all continuous function in $\CC(K)$ whose value at $1$ is off some other non-empty disc.
	\end{enumerate}
	More precisely, our key proposition states the following.

	\begin{prop} \label{Prop_CP2} Let $\varrho=(r_n)_n\subset [0,1)$ be an increasing sequence tending to $1$ and let $a\in\mathbb{C}$. There exists a function $f \in H(\mathbb{D})$ which satisfies the following two properties:
		\begin{enumerate}
			\item for any $r_1<r<1$, $f(r)\notin D(a,1/2)$;
			\item for any compact set $K\subset \mathbb{T}$ containing $1$ and different from $\mathbb{T}$, and any function $h$ continuous on $K$, with $ h (1) \notin \overline{D(a,1)}$, there exists an increasing sequence $(\lambda_n)_n$ of integers such that
			\[
			\sup_{\zeta \in K}\left\vert f\left(r_{\lambda_n}\zeta \right) -h (\zeta) \right\vert \to 0 \quad \hbox{ as }n\to \infty.
			\]
		\end{enumerate}
	\end{prop}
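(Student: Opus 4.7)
The plan is to construct $f=\sum_{n\geq 1}P_n$ as a uniformly convergent series of polynomials, each $P_n$ built by a \emph{three-piece} application of Mergelyan's theorem. The two pieces coming from a disc $\overline{D(0,\rho_{n-1})}$ and from the scaled compact set $r_nK_{\alpha(n)}$ are standard (as in the proof of Theorem \ref{derivatives-Abel}) and enforce, respectively, convergence of the series in $H(\D)$ and the universality statement on compact subsets of $\T$. The new ingredient is the addition of a radial piece on the segment $[r_n,r_{n+1}]$ that forces the partial sum $\sum_{j\leq n}P_j$ to trace a prescribed continuous path $\gamma_n$ avoiding $\overline{D(a,3/4)}$; this is what will ensure property (1).

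More precisely, fix an enumeration $(\vp_n)_n$ of polynomials with rational coefficients satisfying $|\vp_n(1)-a|>1$, an exhaustion $(K_n)_n$ of compact subsets of $\T$ containing $1$ and different from $\T$, the usual pair-revisiting functions $\alpha,\beta$, a decreasing sequence $(\e_n)_n$ with $\sum_n \e_n<1/4$, and $\rho_{n-1}\in(r_{n-1},r_n)$. At step $n$, both $\vp_{\beta(n)}(1)$ and $\vp_{\beta(n+1)}(1)$ belong to the path-connected set $\C\setminus \overline{D(a,3/4)}$, so one can pick a continuous path $\gamma_n:[r_n,r_{n+1}]\to \C\setminus \overline{D(a,3/4)}$ connecting them. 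Apply Mergelyan's theorem on
\[
E_n:=\overline{D(0,\rho_{n-1})}\cup r_nK_{\alpha(n)}\cup [r_n,r_{n+1}]
\]
with target $g_n$ equal to $0$ on the disc, to $\vp_{\beta(n)}(z/r_n)-\sum_{j<n}P_j(z)$ on $r_nK_{\alpha(n)}$, and to $\gamma_n(z)-\sum_{j<n}P_j(z)$ on $[r_n,r_{n+1}]$; this produces a polynomial $P_n$ with $\sup_{E_n}|P_n-g_n|\leq \e_n$. Continuity of $g_n$ at the only possible overlap $r_n$ follows from $\gamma_n(r_n)=\vp_{\beta(n)}(1)$ (using $1\in K_{\alpha(n)}$), $g_n$ is holomorphic on the interior $D(0,\rho_{n-1})$, and $E_n$ has connected complement in $\C$ since $K_{\alpha(n)}\neq \T$ and the three pieces together form no closed loop.

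Setting $f=\sum_n P_n\in H(\D)$, property (1) follows from the fact that for $r\in [r_n,r_{n+1}]$ one has $|f(r)-\gamma_n(r)|\leq \sum_{j\geq n}\e_j<1/4$ while $|\gamma_n(r)-a|\geq 3/4$, giving $|f(r)-a|>1/2$. For property (2), any $h\in\CC(K)$ with $h(1)\notin\overline{D(a,1)}$ is uniformly approximated on $K$ by polynomials $\vp$ belonging to the enumeration $(\vp_n)_n$, and for each such $\vp=\vp_{n_0}$ and each $K_{m_0}\supset K$ one picks an increasing subsequence $(n_\ell)_\ell$ with $\alpha(n_\ell)=m_0$ and $\beta(n_\ell)=n_0$; the standard tail estimate then yields
\[
\sup_{z\in K}|f(r_{n_\ell}z)-\vp_{n_0}(z)|\leq \e_{n_\ell}+\sum_{j>n_\ell}\e_j\to 0.
\]
The main obstacle is to orchestrate the three pieces of $E_n$ so that the targets are compatible at the overlap $r_n$, $E_n$ has connected complement, and $\gamma_n$ leaves a uniform buffer of $1/4$ between its image and $\overline{D(a,1/2)}$ to absorb the cumulative approximation errors; the strict inequality $|\vp_n(1)-a|>1$ built into the enumeration is precisely what allows the choice of $\gamma_n$, after which everything reduces to the standard Mergelyan-based machinery already used in the paper.
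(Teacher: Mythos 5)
Your overall strategy --- a single Mergelyan step per stage on a three-piece compact set consisting of a disc, a radial segment carrying a path that stays away from $D(a,1/2)$, and a scaled copy of $K_{\alpha(n)}$, with the targets matched continuously at the point $r_n\cdot 1$ --- is exactly the mechanism of the paper's proof, and your handling of property (2), of the enumeration of admissible polynomials, and of the continuity/connected-complement hypotheses for Mergelyan is correct. There is, however, a genuine gap in the verification of property (1), caused by how you position the three pieces relative to each other.

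At step $n+1$ the polynomial $P_{n+1}$ is only controlled on $E_{n+1}=\overline{D(0,\rho_n)}\cup r_{n+1}K_{\alpha(n+1)}\cup[r_{n+1},r_{n+2}]$, whose intersection with the radius $[0,1)$ is $[0,\rho_n]\cup[r_{n+1},r_{n+2}]$. Since $\rho_n<r_{n+1}$, the open interval $(\rho_n,r_{n+1})$ meets none of the three pieces, and Mergelyan's theorem gives no bound whatsoever on an approximant off the compact set on which it approximates; since $P_{n+1}$ must realize a nontrivial target on $r_{n+1}K_{\alpha(n+1)}$, there is no reason for it to be small on $(\rho_n,r_{n+1})$. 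Consequently, for $r\in(\rho_n,r_{n+1})\subset[r_n,r_{n+1}]$ the inequality $|f(r)-\gamma_n(r)|\le\sum_{j\ge n}\e_j$ that you invoke is unjustified (the term $P_{n+1}(r)$ in the tail is uncontrolled), and these bad intervals accumulate at $1$, so property (1) is not established on a set where it is required to hold. The repair is to leave no radial hole: the paper takes the segment to be $[r_{n-1},r_n]$ with the scaled compact set attached at its \emph{outer} endpoint $r_n$ and the disc of radius exactly $r_{n-1}$ at its inner endpoint, so that the controlled set covers all of $[0,r_n]$; the price is that the path must then start at the already-constructed value of the partial sum at $r_{n-1}$ (rather than at $\vp_{\beta(n)}(1)$), which is what keeps the Mergelyan target continuous there. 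With that rearrangement your argument goes through essentially verbatim.
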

	
	\begin{proof} We assume that $\sum_n\varepsilon_n< 1/4$. Let us set, for any $n\in \N$, $I_n=K_n \cup\{1\}$. Note that $I_n$ is different from $\T$ for any $n\in \N$ and that any compact subset of $\T$ containing $1$ and different from $\T$ is contained in some $I_n$. Let us also denote by $(\tilde{\varphi}_n)_n$ an enumeration of all polynomials with rational real and imaginary parts coefficients with value at $1$ off $D(a,1)$. Note that every continuous function on $\T$ with value at $1$ off $\overline{D(a,1)}$ can be uniformly approximated on every compact subset of $\T$ containing $1$ and different from $\T$, by a subsequence of $(\tilde{\varphi}_n)_n$.
		
		Set $f_1(z)=\tilde{\varphi}_{\alpha(1)}\left(\frac{z}{r_{1}}\right)$. Observe that $f_1(r_1)\notin D(a,1)$. Moreover we can assume that, for all $k\geq 2$, $\varepsilon_k<d(\tilde{\varphi}_{\alpha(k)}(1),D(a,1))$, where $d$ is the usual distance on $\mathbb{C}$. Using Mergelyan's theorem, we build by induction a sequence of polynomials $f_k$, $k\geq 2$, such that for any $k\geq 2$,
		\begin{enumerate}[(a)]
			\item \label{equ_cons1_cp2}
			$\sup_{z\in r_{k}I_{\beta(k)}}\left\vert f_{k}(z)-\tilde{\varphi}_{\alpha(k)}\left(\frac{z}{r_k}\right)
			\right\vert<\varepsilon_k$;
			\item \label{equ_cons2_cp2} 
			$\sup_{\vert z\vert \leq r_{k-1}}\left\vert f_{k}(z)-f_{k-1}(z)
			\right\vert<\varepsilon_k$;
			\item \label{equ_cons3_cp2}
			$\sup_{r\in [r_{k-1},r_k]}\vert f_{k}(r)-h_k(r)\vert<\varepsilon_k$, where $h_k:[r_{k-1},r_k]\rightarrow\mathbb{C}$ is a continuous functions such that $h_k(r_{k-1})=f_{k-1}(r_{k-1})$, 
			$h_{k}(r_k)=\tilde{\varphi}_{\alpha(k)}(1)$ and for all $r_{k-1}\leq r\leq r_{k},$ $h_k(r)\notin D(a,1)$.
		\end{enumerate}
	Observe that the choice of the sequence $(\varepsilon_k)_k$ and the property (\ref{equ_cons1_cp2}) ensure that $f_k(r_k)\notin D(a,1)$.\\
		Let us set 
		\[
		f(z)=f_1(z)+\sum_{k=1}^{+\infty}(f_{k+1}(z)-f_k(z)).
		\]
		From \eqref{equ_cons2_cp2} we deduce that $f\in H(\mathbb{D})$. Now, by \eqref{equ_cons1_cp2} and \eqref{equ_cons2_cp2}, we have for any $n\in \N$ and any $z\in I_{\beta(n)}$,
		\begin{equation}\label{equ_cons4_cp2}
		\begin{array}{lll}
		\left\vert f(r_{n}z)-\tilde{\varphi}_{\alpha(n)}\left(z\right)
		\right\vert & \leq & \displaystyle
		\left\vert f_n(r_{n}z)-\tilde{\varphi}_{\alpha(n)}\left(z\right)\right\vert+
		\sum_{k=n}^{+\infty}\left\vert f_{k+1}(r_{n}z)-f_k(r_{n}z)\right\vert\\
		& \leq & \displaystyle \left\vert f_n(r_{n}z)-\tilde{\varphi}_{\alpha(n)}\left(z\right)\right\vert+
		\sum_{k=n}^{+\infty}\sup_{\vert z\vert \leq r_{k}}\left\vert f_{k+1}(z)-f_k(z)\right\vert\\
		& < & \displaystyle \sum_{k=n}^{+\infty}\varepsilon_k.\end{array}
		\end{equation}
		This yields the point (2) of the proposition. Indeed, if $n,m\in \N$ are fixed and $(n_l)_l\subset \N$ is an increasing sequence such that $\alpha(n_l)=n$ and $\beta(n_l)=m$ for any $l\in \N$, then by \eqref{equ_cons4_cp2} $f_{r_{n_l}}$ tends to $\tilde{\varphi}_{n}$ uniformly on $I_m$ as $l\to \infty$.
		
		For the proof of (1), let us fix $r\in [r_1,1)$ and let $n\geq 2$ be an integer such that $r_{n-1} \leq r\leq r_n$.
		Since $\sum_k\varepsilon_k< 1/4$ we deduce from \eqref{equ_cons3_cp2} that
		\[
		\begin{array}{lll}\vert f(r)-h_n(r)\vert &\leq &\vert f(r)-f_n(r)\vert+\vert f_n(r)-h_n(r)\vert\\&\leq&
		\displaystyle\sum_{k\geq n}\vert f_{k+1}(r)-f_k(r)\vert +\vert f_n(r)-h_n(r)\vert\\
		&<&\displaystyle\frac{1}{4} + \frac{1}{4}=\frac{1}{2}.\end{array}
		\]
		We conclude the proof thanks to the fact that $h_n(r)\in \C\setminus D(a,1)$ for any $r_{n-1}\leq r\leq r_{n}$.
	\end{proof}
	
	We can easily deduce the answer to Question \ref{quest-CP}.
	
	\begin{thm}\label{Thm_CP} There exist two functions $f_1$ and $f_2$ in $H(\mathbb{D})$ such that the family $\{f_1,f_2\}$ is $\varrho$-Abel multi-universal (resp. Abel multi-universal) but neither $f_1$ nor $f_2$ is $\varrho$-Abel universal (resp. Abel universal).
	\end{thm}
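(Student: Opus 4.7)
The plan is to apply Proposition~\ref{Prop_CP2} twice, with two centres $a_1, a_2 \in \C$ whose closed unit discs are disjoint; for instance I would take $a_1 = 0$ and $a_2 = 3$, so that $\overline{D(a_1,1)} \cap \overline{D(a_2,1)} = \emptyset$. This separation is the decisive point: it guarantees by pigeonhole that every complex number lies outside at least one of $\overline{D(a_i,1)}$. Let $f_1, f_2 \in H(\D)$ denote the functions furnished by the proposition for $a = a_1$ and $a = a_2$ respectively.

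For the non-universality of each $f_i$, I would test against the compact set $K = \{1\}$. By property~(1) of the proposition, $|f_i(r) - a_i| \geq 1/2$ for every $r \in (r_1, 1)$. If $f_i$ were $\varrho$-Abel universal, the orbit $(f_i(r_n))_n$ would be dense in $\CC(\{1\}) \simeq \C$ and would therefore admit $a_i$ as an adherent value, which is forbidden since $|f_i(r_n) - a_i| \geq 1/2$ for all but finitely many $n$. By the same estimate, no sequence $(s_n) \subset [0,1)$ with $s_n \to 1$ can satisfy $f_i(s_n) \to a_i$, so $f_i$ also fails to be Abel universal.

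For the multi-universality of $\{f_1, f_2\}$, I would fix a compact set $K \subset \T$ with $K \neq \T$ and $h \in \CC(K)$, and split into two cases. If $1 \in K$, then by the disjointness of the two closed unit discs, $h(1) \notin \overline{D(a_i,1)}$ for at least one $i \in \{1,2\}$; property~(2) of Proposition~\ref{Prop_CP2} applied to $f_i$ then produces an increasing sequence $(\lambda_n)_n \subset \N$ with $f_i(r_{\lambda_n}\zeta) \to h(\zeta)$ uniformly on $K$. If $1 \notin K$, the point $1$ is isolated in the compact set $K' := K \cup \{1\}$ (still different from $\T$), so $h$ extends continuously to $K'$ by prescribing $h'(1)$ to be any value outside $\overline{D(a_1,1)} \cup \overline{D(a_2,1)}$; property~(2) applied to $f_1$ yields uniform approximation of $h'$ on $K'$, and hence of $h$ on $K$. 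This establishes the $\varrho$-Abel multi-universality of $\{f_1, f_2\}$, and the Abel multi-universality comes for free since $(r_{\lambda_n})_n$ is itself an increasing sequence in $[0,1)$ tending to $1$. The heavy lifting has already been done in Proposition~\ref{Prop_CP2}, and the only real point of attention here is the separation condition on $a_1, a_2$; once that is secured, the pigeonhole on $h(1)$ and the extension-by-isolation trick for the case $1\notin K$ handle everything else.
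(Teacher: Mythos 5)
Your proof is correct and follows essentially the same route as the paper: apply Proposition \ref{Prop_CP2} with two centres $a_1,a_2$ satisfying $|a_1-a_2|>2$ (so that $\overline{D(a_1,1)}\cap\overline{D(a_2,1)}=\emptyset$), rule out universality of each $f_i$ via property (1) at $K=\{1\}$, and get multi-universality by pigeonholing $h(1)$ outside one of the two discs. Your explicit treatment of the case $1\notin K$ (extending $h$ to $K\cup\{1\}$, which is still compact and $\neq\T$) is a detail the paper leaves implicit, and it is handled correctly.
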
 
	
	\begin{proof} Let $a_1,a_2\in\mathbb{C}$ such that $\vert a_1- a_2\vert >2$. Let $f_1$ and $f_2$ in $H(\D)$ be given by Proposition \ref{Prop_CP2} applied respectively with $a=a_1$ and $a=a_2$. Clearly neither $f_1$ nor $f_2$ is Abel universal.
		Now, since $\overline{D(a_1,1)}\cap \overline{D(a_2,1)}=\emptyset$, it is also clear that 
		the family $\{f_1,f_2\}$ is $\varrho$-Abel multi-universal. To complete the proof, it is enough to observe that any $\varrho$-Abel multi-universal set is Abel multi-universal.
	\end{proof}
	
	\begin{rems}{\rm 
			(1) In fact, Proposition \ref{Prop_CP2} implies a result slightly stronger than Theorem \ref{Thm_CP}. Indeed, it shows the existence of a set $\{f_1,f_2\}$ in $H(\D)\setminus \UU_A(\D,\varrho)$ such that the following holds: for any function $h$ continuous on $\T$ there exists $i\in\{1,2\}$ such that for any compact set $K\subset\mathbb{T}$, different from $\mathbb{T}$, there exists 
			an increasing sequence $(\lambda_n)_n \subset \N$ so that $T_{\varrho,\lambda_n}^K(f) \to h$ in $\CC(K)$ as $n\to \infty$. Note that the latter property is stronger than that required in Definitions \ref{defi-CP-1} and \ref{defi-CP-2}.
			
			\medskip
			
			(2) Assume that a set $\{f_1,f_2\} \in H(\D)$ satisfies the following property: for any compact set $K\subset\mathbb{T}$ different from $\mathbb{T}$, there exists $i\in\{1,2\}$ for which
			$f_i$ is universal for $\TT_{\varrho}^K$. Note that this condition is stronger than that given in (1) and that there exist functions universal for $\TT_{\varrho}^K$ which are not Abel universal. Then such a property now implies that at least one of the two functions $f_1$ or $f_2$ is $\varrho$-Abel universal. Similarly, if we assume that for any compact set $K\subset\mathbb{T}$ different from $\mathbb{T}$, there exists $i\in\{1,2\}$ for which there exists $\varrho$ such that $f_i$ is universal for $\TT_{\varrho}^K$, then $f_1$ or $f_2$ is Abel universal.
			
			Let us only outline the proof in the case where $\varrho$ is fixed in advance. Thus assume by contradiction that $\{f_1,f_2\}$ satisfies the above property whereas neither $f_1$ nor $f_2$ is $\varrho$-Abel universal.
			Then, for each $i=1,2$, one can find a compact set $L_i\subset \mathbb{T}$, different from $\mathbb{T}$, a polynomial $h_i$ and $\varepsilon_i>0$ such that for all positive integers 
			$n$ there exists $z_n^{(i)}\in L_i$ satisfying
			\begin{equation}\label{eqqq}\vert f_i(r_nz_n^{(i)})-h_i(z_n^{(i)})\vert >\varepsilon_i.
			\end{equation}
			Using the fact that $L_1$ and $L_2$ are compact sets, there 
			exist increasing sequences of positive integers $(\lambda_n^{(i)})$ and $z_{\infty}^{(i)}\in L_i$, $i=1,2$, so that $z_{\lambda_n^{(i)}}^{(i)}\rightarrow z_{\infty}^{(i)}$. 
			We set, for $i=1,2,$ $K_i=\{z_{\infty}^{(i)}\}\cup\{z_{\lambda_n^{(i)}},n\geq 1\}$. Clearly $K_1$ and $K_2$ are compact subsets of $\mathbb{T}$ with $K_1\cup K_2\ne\mathbb{T}$. So by assumption one of the two functions $f_i$ is universal for $\TT_{\varrho}^{K_1\cup K_2}$, a contradiction with \eqref{eqqq}.
		}
	\end{rems}
	
	\medskip
	
	To conclude this section, let us recall that one of most elegant results in linear dynamics asserts that a vector $x$ in some separable Fréchet space $X$ is hypercyclic for some continuous operator $T:X\to X$ whenever the orbit $\{T^n x:\, n\in \N\}$ of $x$ under $T$ is \emph{somewhere} dense in $X$ (see \cite{Bour} or \cite[Theorem 6.5]{gp}). Costakis and Peris' result is a corollary of this result. The following proposition, which is a direct consequence of Proposition \ref{Prop_CP2}, shows that Bourdon-Feldman's theorem does not extend to the concept of universality.
	
	\begin{prop}For any $\varrho$, there exists a function $f\in H(\D)$ that is $\TT_{\varrho'}^{\{1\}}$-universal for no sequence $\varrho'$, and that enjoys yet the property that given any compact set $K\subset \mathbb{T}$ different from $\mathbb{T}$, the set $\{T_{\varrho,n}^K(f)|_K:\,n\in \N\}$ is somewhere dense in $\CC(K)$.
	\end{prop}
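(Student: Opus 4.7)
The plan is to apply Proposition \ref{Prop_CP2} directly with $a=0$. This produces a function $f\in H(\D)$ satisfying (i) $f(r)\notin D(0,1/2)$ for every $r\in(r_1,1)$, and (ii) for every compact set $K\subset\T$ containing $1$ and different from $\T$, and every $h\in\CC(K)$ with $h(1)\notin\overline{D(0,1)}$, some subsequence of $(f_{r_n}|_K)_n$ converges to $h$ uniformly on $K$. Both conclusions of the proposition will then be extracted from (i) and (ii).

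For the non-universality, I would fix an arbitrary sequence $\varrho'=(r'_n)_n\subset[0,1)$ tending to $1$. Since $r'_n\to 1$, one has $r'_n>r_1$ for all but finitely many $n$, and then (i) gives $f(r'_n)\notin D(0,1/2)$ eventually. Hence the orbit $\{T_{\varrho',n}^{\{1\}}(f):n\in\N\}=\{f(r'_n):n\in\N\}$ cannot be dense in $\C\simeq\CC(\{1\})$, so $f$ fails to be $\TT_{\varrho'}^{\{1\}}$-universal. The argument is valid for every admissible $\varrho'$.

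For the somewhere-density statement, I would fix a compact set $K\subset\T$ different from $\T$ and set $K':=K\cup\{1\}$, which is still a compact proper subset of $\T$ but now contains $1$. I split into two cases. If $1\in K$, then $K'=K$, and the set $V:=\{h\in\CC(K):h(1)\notin\overline{D(0,1)}\}$ is a non-empty open subset of $\CC(K)$: non-empty since the constant function $2$ belongs to it, and open as the preimage of $\C\setminus\overline{D(0,1)}$ under the continuous evaluation $h\mapsto h(1)$. Property (ii) then places every element of $V$ inside the closure of $\{T_{\varrho,n}^K(f):n\in\N\}$, so this closure has non-empty interior. If $1\notin K$, then $1$ is isolated in $K'$ (because $K$ is closed in $\T$ and avoids $1$), so every $\varphi\in\CC(K)$ extends to some $h\in\CC(K')$ with, say, $h(1)=2\notin\overline{D(0,1)}$; applying (ii) to such an $h$ and restricting to $K$ shows that any $\varphi\in\CC(K)$ lies in the closure of the orbit, which is therefore all of $\CC(K)$ and is trivially somewhere dense. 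I expect no serious obstacle: Proposition \ref{Prop_CP2} already encodes all the substance, and only this case distinction on whether $1\in K$ needs to be handled to locate a non-empty open set inside the orbit closure.
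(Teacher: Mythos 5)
Your proposal is correct and is exactly the route the paper intends: the paper gives no separate proof, stating only that the proposition is a direct consequence of Proposition \ref{Prop_CP2}, and your application with $a=0$ (non-density of the radial orbit off $D(0,1/2)$ for every $\varrho'$, plus the open set $\{h:\,h(1)\notin\overline{D(0,1)}\}$ inside the orbit closure, with the harmless extension $K\cup\{1\}$ when $1\notin K$) is precisely that deduction. The only cosmetic point is that Proposition \ref{Prop_CP2} is stated for increasing $\varrho$ while the claim reads ``for any $\varrho$''; passing to an increasing subsequence of $\varrho$ (which only shrinks the orbit whose somewhere-density is needed) disposes of this.
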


\section{Abel universality: a very flexible notion}\label{section-application}

Paraphrasing Section \ref{prelim}, a function $f\in H(\D)$ is Abel universal (resp. $\varrho$-Abel universal) if the set $N_{f}(V,K)$ (resp. $N_{f}(V,K,\varrho)$) is non-empty (or equivalently infinite) for any compact set $K\subset \T$, different from $\T$. It is natural to think of quantifying the size of these sets or, more generally, of understanding which "type" of sets they can be. The purpose of this section is to show that the notion of Abel universality is as flexible as one may think of, namely that the sets $N_{f}(V,K)$ can literally be "anything which makes sense". In this context, "anything which makes sense" will refer to a \emph{locally finite} countable family of compact subsets of $[r(V,K),1)$ for some $r(V,K)\in [0,1)$ close enough to $1$ (depending only on $V$ and $K$) and where, by definition, a countable family $\FF$ of compact subsets of $[0,1)$ is \emph{locally finite} if for any compact set $K\subset [0,1)$, $K$ intersects at most finitely many sets of $\FF$.

For $l\in \N$, let $(V_n(l))_n$ be a sequence of non-empty open \textit{balls} defining the topology of $\VV(K_l)$.

\begin{prop}\label{key-ingredient-FU}Let $\{\Gamma(l,n,m):\, l,n,m\in \N\}$ be a locally finite infinite family of pairwise disjoint segments in $[0,1)$. Set
	\[
	\Gamma(l,n):=\bigcup_{m\in \N}\Gamma(l,n,m), \quad l,n\in \N.
	\]
	Let $\varepsilon >0$, $g\in H(\D)$ and let $D$ be a closed disc centred at $0$. 
	There exist a family $\{r(l,n):\,l,n\in \N\}$ in $[0,1)$ and a function $f\in H(\D)$ such that $\sup_D |f - g| < \varepsilon$ and such that for any $l,n\in \N$ and any $r\in \Gamma(l,n)\cap [r(l,n),1)$, $f\circ \phi_r \in V_n(l)$.
\end{prop}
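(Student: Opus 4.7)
My plan is to construct $f$ as $g$ plus a summable series of polynomial corrections, one per segment, using Mergelyan's theorem to hit a prescribed target on each segment while remaining negligible on $D$ and on a growing exhaustion of $\D$. First, I enumerate the segments as $(\Gamma_j=[\alpha_j,\beta_j])_{j\geq 1}$ in linear order of their endpoints, which is possible because they are pairwise disjoint intervals in $[0,1)$; local finiteness forces $\alpha_j\to 1$ and hence $\beta_j-\alpha_j\leq 1-\alpha_j\to 0$, a geometric fact that drives the whole construction. Write $\Gamma_j=\Gamma(l_j,n_j,m_j)$, and let $\psi_n(l)$ and $\delta_n(l)$ denote the centre and radius of the open ball $V_n(l)\subset\CC(K_l)$. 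For each pair $(l,n)$ I choose once and for all, via Mergelyan on $K_l$, a polynomial $\tilde{\psi}_{l,n}$ approximating $\psi_n(l)$ on $K_l$ to within $\delta_n(l)/3$, together with a Lipschitz constant $C_{l,n}$ such that $|\tilde{\psi}_{l,n}(u\zeta)-\tilde{\psi}_{l,n}(\zeta)|\leq C_{l,n}|u-1|$ for $\zeta\in\T$ and $u$ near $1$.

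Fix a summable positive sequence $(\eta_j)$ with $\sum_j\eta_j<\varepsilon$ and let $j_0$ be the least index with $D\subset\{|z|<\alpha_{j_0}\}$. I build polynomials $P_j$, $j\geq j_0$, by induction: at step $j$, apply Mergelyan on the compact set $E_j\cup F_j$, where $E_j:=\Gamma_j\cdot K_{l_j}$ and $F_j$ is a closed disc centred at $0$ with radius $R_j<\alpha_j$ chosen so that $F_j\supset D\cup E_{j_0}\cup\cdots\cup E_{j-1}$, $F_j\cap E_j=\emptyset$, and $R_j\to 1$ as $j\to\infty$. The target is
\[
h_j(z)=\begin{cases}\tilde{\psi}_{l_j,n_j}(z/r_j^*)-g(z)-\sum_{k=j_0}^{j-1}P_k(z)&z\in E_j,\\ 0&z\in F_j,\end{cases}
\]
with $r_j^*\in\Gamma_j$ chosen once; $h_j$ is continuous on $E_j\cup F_j$ and holomorphic in the interior of each piece, and $E_j\cup F_j$ has connected complement in $\widehat{\C}$ because the gap $\T\setminus K_{l_j}$ provides a corridor from $\{R_j<|z|<\alpha_j\}$ out to infinity. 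Mergelyan yields $P_j$ with $\sup_{E_j\cup F_j}|P_j-h_j|<\eta_j$. Setting $f:=g+\sum_{j\geq j_0}P_j$, the series converges in $H(\D)$ because $R_j\to 1$ forces every compact subset of $\D$ into $F_j$ for $j$ large, and $\sup_D|f-g|<\varepsilon$.

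To verify the main property I fix $(l,n)$ and, for $j$ with $(l_j,n_j)=(l,n)$, decompose for $r\in\Gamma_j$, $\zeta\in K_l$,
\[
|f(r\zeta)-\psi_n(l)(\zeta)|\leq|f(r\zeta)-\tilde{\psi}_{l,n}(r\zeta/r_j^*)|+|\tilde{\psi}_{l,n}(r\zeta/r_j^*)-\tilde{\psi}_{l,n}(\zeta)|+|\tilde{\psi}_{l,n}(\zeta)-\psi_n(l)(\zeta)|,
\]
and bound the three pieces by $\sum_{k\geq j}\eta_k$ (using $E_j\subset F_k$ for $k>j$), by $C_{l,n}(\beta_j-\alpha_j)/\alpha_j$ (using the Lipschitz bound with $u=r/r_j^*$), and by $\delta_n(l)/3$. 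The first two tend to $0$ as $j\to\infty$ with $(l,n)$ fixed, so all three are $<\delta_n(l)/3$ for all but finitely many such $j$, yielding $f\circ\phi_r\in V_n(l)$; I then define $r(l,n)$ to exceed the right endpoint of every exceptional segment with $(l_j,n_j)=(l,n)$. The main obstacle is that Mergelyan cannot directly target $\zeta\mapsto\psi_n(l)(\zeta)$ on $E_j$, since that function depends on $z/|z|$ and fails to be holomorphic; the polynomial proxy $\tilde{\psi}_{l,n}(z/r_j^*)$ fixes this, and the proxy-mismatch is tamed precisely by the vanishing of segment lengths $\beta_j-\alpha_j$.
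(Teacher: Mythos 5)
Your proof is correct and follows essentially the same strategy as the paper's: order the segments using local finiteness, build a series of polynomial corrections (via Mergelyan/Runge) that are small on an exhausting sequence of closed discs and approximate a polynomial/entire proxy for the centre of $V_n(l)$ on each sector $\Gamma_j\cdot K_{l_j}$, then absorb the discrepancy between the dilated and undilated targets by uniform continuity near $\T$ (your Lipschitz estimate at the reference point $r_j^*$ plays exactly the role of the paper's preliminary estimate used to define $r(l,n)$). The only cosmetic difference is that you fix $r(l,n)$ a posteriori to discard finitely many exceptional segments, whereas the paper chooses it up front and truncates the segments before running the induction.
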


This proposition is of interest if for some $(l,n)$, the point $1$ is contained in the closure of $\Gamma(l,n)$ (if there is no $(l,n)$ for which it is the case, then $r(l,n)$ can be chosen so that $\Gamma(l,n)\cap [r(l,n),1)$ is empty). For our purpose, we are interested in applications where the sets $\Gamma(l,n)$ are "big" near $1$, for e.g. when $1$ is a limit point of every set $\Gamma(l,n)$, $l,n\in \N$.

\begin{proof} By Mergelyan's theorem we may and shall assume that for any $l,n\in \N$, $V_n(l)$ is centred at some entire function $g_n$ and has radius $\delta_n$, where $(\delta_n)_n$ is some sequence of positive numbers. By uniform continuity, for any $l,n\in \N$, we can define $r(l,n)\in [0,1)$ such that
\begin{equation}\label{eq-supp-sec-6}
\sup_{r\in [r(l,n),1]}\sup_{\zeta\in K_l}|g_n\circ \phi_r(\zeta)-g_n(\zeta)| < \frac{\delta_n}{4}.
\end{equation}
We shall also assume that $r(l,n)$ is larger than the radius of $D$ for any $l,n\in \N$. Let $\widetilde{V_n(l)}$ denote the ball of $\VV(K_l)$ with the same center as $V_n(l)$ with radius half of that of $V_n(l)$. For any $l,n,m\in \N$, let us denote by $\tilde{\Gamma}(l,n,m)$ the set $\Gamma(l,n,m)\cap [r(l,n),1)$ and by $\tilde{\Gamma}(l,n)$ the set
\[
\tilde{\Gamma}(l,n):=\Gamma(l,n)\cap[r(l,n),1) = \bigcup_{m\in \N}\tilde{\Gamma}(l,n,m).
\]
It may happen that the family $\{\tilde{\Gamma}(l,n,m):\, l,n,m\in \N\}$ is finite. This case is much simpler than that where it is infinite and is omitted. Let us then assume that the family $\{\tilde{\Gamma}(l,n,m):\, l,n,m\in \N\}$ is infinite. Also, upon removing all the empty sets $\tilde{\Gamma}(l,n,m)$ and reordering the remaining non-empty ones, we may and shall assume that every $\tilde{\Gamma}(l,n,m)$, $l,n,m \in \N$, is non-empty. Since $\{\tilde{\Gamma}(l,n,m):\, l,n,m\in \N\}$ is locally finite, we can then find an ordering $(\tilde{\Gamma}_i)_{i\in \N}$ of it and an increasing sequence $(s_i)_{i\geq 1}$ in $(0,1)$, tending to $1$, such that for any $i\geq 1$,
\[
\max(\tilde{\Gamma}_{i-1}) < s_i < \min(\tilde{\Gamma}_{i}).
\]
Note that for any $i\in \N$, there exists a unique pair $(l,n)$ such that $\tilde{\Gamma}_{i}\subset \tilde{\Gamma}(l,n)$. Let us write $D_0=D$ and denote by $D_i$, $i\geq 1$, the closed disc centred at $0$, with radius $s_i$. By induction, let us build a sequence $(P_i)_i$ of polynomials as follows. If $D_0=\emptyset$, set $P_0=0$. If not, let us apply Runge's theorem to define $P_0$ as any polynomial satisfying $\sup_{D_0} |P_0-g| < \varepsilon/2$ and
\[
\sup_{\zeta\in K_l}|P_0\circ \phi_r(\zeta)-g_n\circ \phi_r(\zeta)| < \frac{\delta_n}{4}
\]
for any $r\in \tilde{\Gamma}_0$, where $(l,n)$ is the only pair such that $\tilde{\Gamma}_{0}\subset \tilde{\Gamma}(l,n)$. Then we build by induction positive real numbers $\eta_k$, $k\geq 1$, and by applying Runge's theorem, polynomials $P_k$, $k\geq 1$, such that for any $j\geq 1$ and any $0\leq i\leq j-1$:
	\begin{enumerate}[(a)]
		\item $\sup_{K_l}\left\vert \sum _{k=0}^i P_k\circ \phi_r - g_n\circ\phi_r \right\vert  + \sum_{k=i+1}^j \eta_k < \frac{\delta_n}{4}$ for any $r \in \tilde{\Gamma}_{i}$ where $(l,n)$ is the only pair for which $\tilde{\Gamma}_{i}\subset \tilde{\Gamma}(l,n)$;
		
	\item $\eta_j < \varepsilon/2^{j+1}$;
		\item $\sup_{K_l}\left\vert \sum _{k=0}^j P_k\circ \phi_r - g_n\circ\phi_r \right\vert < \frac{\delta_n}{4}$ for any $r \in \tilde{\Gamma}_{j}$ where $(l,n)$ is the only pair for which $\tilde{\Gamma}_{j}\subset \tilde{\Gamma}(l,n)$;
		\item $\sup _{D_{j}}|P_j|< \eta _{j}$.
	\end{enumerate}
	
	Then we set $f=\sum _{i\geq 0} P_i$. By (b) and (d) and the choice of $P_0$, we get $f\in H(\D)$ and $\sup_{D} |f - g| < \varepsilon$. Let us now check that $f$ satisfies $f\circ \phi_r \in V_n(l)$ for any $l,n\in \N$ and any $r\in \tilde{\Gamma}(l,n)$. Fix $l,n\in \N$ and $r\in \tilde{\Gamma}(l,n)$. There exists a unique $i\in \N$ such that $r\in \tilde{\Gamma}_i$. We deduce from (a), (c) and \eqref{eq-supp-sec-6} that
	\[
	f\circ \phi_r = \sum_{k=0}^i P_k\circ \phi_r + \sum _{k\geq i+1}P_k\circ \phi_r \in \overline{\widetilde{V_n(l)}}\subset V_n(l).
	\]
	This gives the desired conclusion.
\end{proof}

\medskip

\begin{rem}{\rm If we assume that the sets $\Gamma(l,n,m)$ all have empty interior, then Proposition \ref{key-ingredient-FU} may be a little bit strengthened, using Mergelyan's theorem instead of Runge's theorem. More precisely, the reader may check that the following holds:
\begin{prop}Let $\{\Gamma(l,n,m):\, l,n,m\in \N\}$ be a locally finite infinite family of pairwise disjoint segments in $[0,1)$. Set
\[
\Gamma(l,n):=\bigcup_{m\in \N}\Gamma(l,n,m), \quad l,n\in \N.
\]
Let $\varepsilon >0$, $g\in H(\D)$ and let $D$ be a closed disc centred at $0$. 
There exists a function $f\in H(\D)$ such that $\sup_D |f - g| < \varepsilon$ and such that for any $l,n\in \N$ and any $r\in \Gamma(l,n)$, $f\circ \phi_r \in V_n(l)$.
\end{prop}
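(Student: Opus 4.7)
The plan is to repeat the construction of Proposition \ref{key-ingredient-FU} but with Mergelyan's theorem in place of Runge's theorem at each inductive step. This removes the need for the threshold $r(l,n)$: in the original proof, Runge's theorem produced polynomials approximating $g_n$ only on \emph{dilates} $\phi_r(K_l)$, so one had to separately absorb the uniform-continuity error $|g_n\circ\phi_r-g_n|$ by forcing $r$ close enough to $1$. With Mergelyan we can hit $g_n$ exactly (up to an arbitrarily small error) at a single radius $t$, so no such threshold is needed.

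Since each $\Gamma(l,n,m)\subset[0,1)$ has empty interior, it is either empty or a singleton $\{t(l,n,m)\}$; discard the empty ones. Pairwise disjointness makes the remaining $t(l,n,m)$ distinct, and local finiteness forces them to accumulate only at $1$. Enumerate them in strictly increasing order as $(t_i)_{i\geq 0}$ and write $(l_i,n_i,m_i)$ for the unique triple with $t_i=t(l_i,n_i,m_i)$. As in the original proof, by Mergelyan's theorem we may assume that for each $(l,n)$ the set $V_n(l)$ is the open ball of radius $\delta_n>0$ in $\mathcal{C}(K_l)$ around the restriction of some entire function $g_n$. Pick $s_0$ equal to the radius of $D$ and a strictly increasing sequence $s_i\in(t_{i-1},t_i)$ tending to $1$; set $D_i:=\overline{D(0,s_i)}$, so that $D_0=D$, $D_{i-1}\subset D_i$, and the compact set $t_i K_{l_i}$ is disjoint from $D_i$. (We tacitly assume no $t_i$ lies in $D$, which is the case of interest; finitely many exceptional points in $D$ would require an additional compatibility between $g|_D$ and the target values at those radii, which is not guaranteed in general.)

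Inductively choose polynomials $P_i$ and positive reals $\eta_i<\varepsilon/2^{i+1}$. For $i=0$, apply Mergelyan to $D\cup t_0 K_{l_0}$ (a disjoint compact union with connected complement, since $K_{l_0}$ has connected complement and $t_0>s_0$) to obtain $P_0$ with $\sup_D|P_0-g|<\varepsilon/2$ and $\sup_{\zeta\in K_{l_0}}|P_0(t_0\zeta)-g_{n_0}(\zeta)|<\delta_{n_0}/4$. For $i\geq 1$, given $P_0,\ldots,P_{i-1}$, shrink $\eta_i$ so that any future perturbation of cumulative size $\sum_{k>i}\eta_k$ cannot push the approximations at $t_0,\ldots,t_{i-1}$ out of their respective balls $V_{n_j}(l_j)$; then apply Mergelyan to $D_i\cup t_i K_{l_i}$ to produce $P_i$ satisfying $\sup_{D_i}|P_i|<\eta_i$ and
\[
\sup_{\zeta\in K_{l_i}}\Bigl|\sum_{k=0}^{i}P_k(t_i\zeta)-g_{n_i}(\zeta)\Bigr|<\delta_{n_i}/4.
\]
Setting $f=\sum_{i\geq 0}P_i$, the first condition together with $\eta_i<\varepsilon/2^{i+1}$ gives $f\in H(\D)$ with $\sup_D|f-g|<\varepsilon$, and the second combined with the tail bound gives $f\circ\phi_{t_i}\in V_{n_i}(l_i)$ for every $i$.

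The essential new ingredient compared to Proposition \ref{key-ingredient-FU} is the Mergelyan step on the disjoint compact union $D_i\cup t_i K_{l_i}$: this is exactly where the empty-interior hypothesis is used, since for a genuine subinterval $\Gamma(l,n,m)\subset[0,1)$ the family $\phi_r(K_l)$ would sweep across a whole annular region and Mergelyan could no longer match $g_n$ \emph{on the nose} at every radius. The main technical nuisance is the bookkeeping of the thresholds $\eta_i$ so that later perturbations never spoil earlier approximations, which is handled exactly as in the original proof.
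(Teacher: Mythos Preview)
Your proof is correct and follows essentially the approach the paper indicates: replace Runge's theorem by Mergelyan's theorem at each inductive step, so that on the (now empty-interior) set $t_iK_{l_i}$ one may approximate the \emph{continuous} function $z\mapsto g_{n_i}(z/t_i)$ directly, thereby dispensing with the uniform-continuity correction and the threshold $r(l,n)$. The paper itself only states the result and leaves the verification to the reader, so there is no more detailed proof to compare against; your bookkeeping with the $\eta_i$ mirrors conditions (a)--(d) in the proof of Proposition~\ref{key-ingredient-FU}.

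Your parenthetical observation is also well taken: as stated, the proposition can fail when some $t_i$ satisfies $t_iK_{l_i}\subset D$, since then the constraints $\sup_D|f-g|<\varepsilon$ and $f\circ\phi_{t_i}\in V_{n_i}(l_i)$ are in general incompatible (take $g=0$, $\varepsilon$ small, and $V_{n_i}(l_i)$ centred away from $0$). By local finiteness only finitely many $t_i$ can lie in $D$, and for every application in the paper one is free to choose the $\Gamma(l,n,m)$ to avoid any prescribed compact subset of $[0,1)$, so this does not affect the corollaries; but strictly speaking the statement should carry the harmless extra hypothesis that each $\Gamma(l,n,m)$ lies outside $D$ (or equivalently that the conclusion holds only for $r$ larger than the radius of $D$). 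You handled this correctly by flagging it rather than glossing over it.
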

This statement makes it appear that the sequence $T^K_{\varrho,n}: f\in H(\D) \mapsto f\circ \phi_{r_n} \in \CC(K)$ is \emph{Runge transitive} in a very strong sense, for any compact set $K\subset \T$ different from $\T$. We refer to \cite[Definition 3.1]{BonGe} for the definition of Runge transitive operators, a definition that can easily be extended to general sequences of operators. Here, if $X$ and $Y$ were Banach spaces, we may say that a sequence $(T_n)_n:X\to Y$ is Runge transitive "in a very strong sense" if it satisfies the following: given any non-empty open set $U\subset X$, any non-empty open sets $V_1,V_2 \subset Y$ and any finite sets $E_1\subset \N$ and $E_2\subset \N$ with $E_1\cap E_2=\emptyset$, there exists $x\in U$ such that for any $n_1\in E_1$ and any $n_2\in E_2$,
		\[
		T_{n_1}(x)\in V_1\quad \text{and}\quad T_{n_2}(x)\in V_2.
		\]
}
\end{rem}

\medskip

In view of the previous remark and \cite[Theorem 3.3]{BonGe}, it is natural to expect the existence of \emph{frequently} Abel universal functions. The rest of this section will confirm this. Let us recall that frequent universality is a generalization of the very important notion of frequent hypercyclicity, introduced by Bayart and Grivaux \cite{BayGri}. Another important notion in linear dynamics is is that of upper frequent universality, introduced by Shkarin \cite{Shka}, which also makes sense in the theory of universality. In order to give definitions, let us introduce some terminology. We define the lower density (resp. the upper density) of a subset $E$ of $\N$, denoted by $\underline{d}(E)$ (resp. $\overline{d}(E)$), as the quantity
\[
\underline{d}(E):=\liminf _{n\to \infty}\frac{|E\cap \{1,\ldots,n\}|}{n}\quad \text{(resp.}\,\limsup _{n\to \infty}\frac{|E\cap \{1,\ldots,n\}|}{n}\text{)},
\]
where the notation $|E|$ denotes the cardinality of $E$. Note that we have $\overline{d}(E):=1-\underline{d}(\N\setminus E)$.
Then a sequence $(T_n)_n$ of continuous linear operators from one Fréchet space $X$ to another Fréchet space $Y$ is said to be frequently universal (resp. upper frequently universal) if there exists $x\in X$ such that for any non-empty open set $U$ of $Y$, the set
\[
\{n\in \N:\,T_n(x)\in U\}
\]
has positive lower density (resp. positive upper density). Such a vector $x$ is said to be frequently hypercyclic (resp. upper frequently hypercyclic) for $(T_n)_n$.

As recalled in the introduction, many concrete examples of frequently universal sequences of iterates $(T^n)_n$ of a single continuous operator $T$ from a Fréchet space to itself have been exhibited (we refer the reader to the books \cite{BayMat,gp}). However, rather few (explicit and natural) examples of frequently universal sequences $(T_n)_n$, which are not iterates of a single operator, are known (see \cite{CEMM} for example). 

\medskip

Thus it is natural to wonder whether Abel universal functions may be (upper) frequently universal. Let us specify what we mean by a \textit{(upper) frequently Abel universal functions}.

\begin{defi}\label{defi-FU-Abel}Let $f\in H(\D)$.
	\begin{enumerate}\item The function $f$ is said to be \emph{$\varrho$-frequently Abel universal} if it satisfies the following property: for every compact set $K\subset \T$ different from $\T$ and any $V\in \VV(K)$, the set $N_{f}(V,K,\varrho)$ has positive lower density.
		
		We will denote by $\mathcal{FU}_A(\mathbb{D},\varrho)$ the set of all $\varrho$-frequently Abel universal functions.
		\item The function $f$ is said to be \emph{$\varrho$-upper frequently Abel universal} if it satisfies the following property: for every compact set $K\subset \T$ different from $\T$ and any $V\in \VV(K)$, the set $N_{f}(V,K,\varrho)$ has positive upper density.
		
		We will denote by $\mathcal{FU}^u_A(\mathbb{D},\varrho)$ the set of all $\varrho$-upper frequently Abel universal functions.
	\end{enumerate}
\end{defi}

The definition of the set $\UU_A(\D)$ invites us to propose a variant of the previous definitions. Indeed, there exist notions of lower and upper density for subsets of $[0,1)$. Let $\Gamma \subset [0,1)$. We call \textit{uniform lower density} of $\Gamma$ the quantity defined by
\[
\underline{d_u}(\Gamma)=\liminf_{n\to \infty}n |\Gamma\cap [1-1/n,1)|,
\]
where $|E|$ denotes de Lebesgue measure of a set $E\subset \T$. Similarly, the \textit{uniform upper density} of $\Gamma$ is defined by
$\overline{d_u}(\Gamma)=1-\underline{d_u}([0,1)\setminus\Gamma)$. It is also easily seen that
\[
\overline{d_u}(\Gamma)=\limsup_{n\rightarrow +\infty} \left(n\vert \Gamma\cap[1-1/n,1)\vert\right).
\]
This leads us to the following definition.

\begin{defi}\label{defi-unif-FU-Abel}Let $f\in H(\D)$.
	\begin{enumerate}\item The function $f$ is said to be \emph{frequently Abel universal} if it satisfies the following property: for every compact set $K\subset \T$ different from $\T$ and any $V\in \VV(K)$, the set $N_{f}(V,K)$ has positive uniform lower density.
		
		We will denote by $\mathcal{FU}_A(\mathbb{D})$ the set of all frequently Abel universal functions.
		
		\item The function $f$ is said to be \emph{upper frequently Abel universal} if it satisfies the following property: for every compact set $K\subset \T$ different from $\T$ and any $V\in \VV(K)$, the set $N_{f}(V,K)$ has positive uniform upper density.
		
		We will denote by $\mathcal{FU}^u_A(\mathbb{D})$ the set of all frequently Abel universal functions.
	\end{enumerate}
\end{defi}

The following lemma gives examples of sets with positive lower or upper densities that are useful for our purpose.

\begin{lemme}\label{lem-sets-density-utiles}$\,$
	\begin{enumerate}
		\item There exist pairwise disjoint subsets $A(l,n)$ of $\N$ such that $\underline{d}(A(l,n))>0$ for any $l,n\in \N$;
		\item There exist pairwise disjoint subsets $A(l,n)$ of $\N$ such that $\overline{d}(A(l,n))=1$ for any $l,n\in \N$;
		\item There exist a locally finite family $\{\Gamma(l,n,m):\,l,n,m\in \N\}$ of pairwise disjoint segments in $[0,1)$ such that $\underline{d_u}(\Gamma(l,n))>0$ for any $l,n\in \N$;
		\item  There exist a locally finite family $\{\Gamma(l,n,m):\,l,n,m\in \N\}$ of pairwise disjoint segments in $[0,1)$ such that $\overline{d_u}(\Gamma(l,n))=1$ for any $l,n\in \N$.
	\end{enumerate}
\end{lemme}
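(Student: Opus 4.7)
The plan is to settle (1) and (2) by direct constructions in $\N$, and then to transfer them to $[0,1)$ for (3) and (4) by placing segments inside suitably chosen dyadic or geometric blocks that accumulate only at $1$.

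For (1), I would use the elementary observation that the arithmetic progressions $B_j := \{k \geq 1 : k \equiv 2^{j-1} \pmod{2^j}\}$, $j \geq 1$, partition $\{1, 2, 3, \ldots\}$ (writing any $k \geq 1$ as $2^a m$ with $m$ odd places it in $B_{a+1}$ and in no other $B_j$), and each has lower density $2^{-j} > 0$. Fixing any bijection $j : \N \times \N \to \{1,2,3,\ldots\}$ and setting $A(l,n) := B_{j(l,n)}$ closes part (1). For (2), I would set $N_0 := 1$ and $N_k := 2^{2^k}$ for $k \geq 1$, together with $D_k := [N_{k-1}, N_k) \cap \N$, and fix a surjection $k \mapsto (l_k, n_k)$ from $\{1, 2, 3, \ldots\}$ onto $\N \times \N$ in which every pair is hit infinitely often; letting $A(l,n) := \bigcup_{k : (l_k, n_k) = (l,n)} D_k$, pairwise disjointness is immediate, and since $|D_k|/N_k = 1 - N_{k-1}/N_k \to 1$, evaluating $|A(l,n) \cap \{1, \ldots, N_k\}|/N_k$ along the (infinitely many) $k$'s with $(l_k, n_k) = (l,n)$ gives $\overline{d}(A(l,n)) = 1$.

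For (3), I would set $J_k := [1 - 1/k, 1 - 1/(k+1))$ for $k \geq 1$. These are pairwise disjoint segments in $[0,1)$ of length $1/(k(k+1))$, accumulating only at $1$, so the family is locally finite, and $[1 - 1/n, 1) = \bigsqcup_{k \geq n} J_k$. Enumerating $A(l,n) = \{k_m : m \geq 1\}$ from part (1) in increasing order and setting $\Gamma(l,n,m) := J_{k_m}$, one gets
\[
n \bigl| \Gamma(l,n) \cap [1 - 1/n, 1) \bigr| = n \sum_{k \in A(l,n),\ k \geq n} \frac{1}{k(k+1)}.
\]
If $\underline{d}(A(l,n)) = \delta > 0$, then for any fixed $M > 1/\delta$ and $n$ large the interval $\{n, n+1, \ldots, \lfloor Mn \rfloor\}$ contains at least $(\delta M - 1) n (1 + o(1))$ elements of $A(l,n)$, each contributing at least $1/(Mn(Mn+1))$ to the sum, so the right-hand side is bounded below by a positive constant for $n$ large; this proves $\underline{d_u}(\Gamma(l,n)) > 0$.

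For (4), I would replace the dyadic partition by a geometric one: set $t_k := 2^{-2^k}$ and $J_k := [1 - t_{k-1}, 1 - t_k)$ for $k \geq 1$; the $J_k$ are still pairwise disjoint segments in $[0,1)$ accumulating only at $1$, hence locally finite. Using the surjection $k \mapsto (l_k, n_k)$ from (2) and enumerating, for each pair $(l,n)$, the $k$'s assigned to $(l,n)$ as $(k_m)_m$, I set $\Gamma(l,n,m) := J_{k_m}$. For any such $k$, evaluating at $N := \lfloor 1/t_{k-1} \rfloor$ gives $N \cdot |\Gamma(l,n) \cap [1 - 1/N, 1)| \geq N \cdot |J_k| \geq 1 - t_k/t_{k-1} - o(1) \to 1$, which yields $\overline{d_u}(\Gamma(l,n)) = 1$. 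The only technical point is the density transfer in part (3), where the factor of $n$ in the definition of $\underline{d_u}$ must be balanced against the $1/k^2$ decay of $|J_k|$; all other steps are direct verifications or bookkeeping.
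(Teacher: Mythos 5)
Your proof is correct, but for parts (1) and (3) it takes a genuinely different and more self-contained route than the paper. For (1) the paper simply invokes Lemma 9.5 of Grosse-Erdmann--Peris, and crucially it uses the extra separation property of that lemma ($j\geq n$ and $|j-j'|\geq n+n'$ for $j\in A(l,n)$, $j'\in A(l',n')$) in part (3), where it places a \emph{small} segment of length about $\tfrac{2}{3}(j^2-1)^{-1}$ around each point $1-1/j$, $j\in A(l,n)$; the separation is what makes those segments pairwise disjoint and the family locally finite, and the lower-density estimate then comes from $m\leq j_m\leq Mm$ and a comparison with $\sum_{m\geq N}m^{-2}$. You avoid the citation altogether: your $2$-adic classes $B_j$ give an explicit partition of $\N$ into progressions of positive density, and for (3) you index the members of the canonical partition $J_k=[1-1/k,1-1/(k+1))$ of $[0,1)$ by the elements of $A(l,n)$, so disjointness and local finiteness are automatic and no separation property is needed; your counting argument on $\{n,\dots,\lfloor Mn\rfloor\}$ with $M>1/\delta$ correctly yields a lower bound of order $(\delta M-1)/M^2$. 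What the paper's route buys is brevity by outsourcing the combinatorics; what yours buys is a fully elementary, verifiable construction. Parts (2) and (4) are essentially the same double-exponential block idea as in the paper ($\Gamma_i=[1-2^{-N_i},1-2^{-M_i}]$ with $2^{N_i-M_i}\to 0$ there, $[1-t_{k-1},1-t_k)$ with $t_k=2^{-2^k}$ for you). One cosmetic point: your $J_k$ are adjacent half-open intervals, whereas the paper's segments are closed and, in the application (Proposition \ref{key-ingredient-FU}), consecutive members of the family must be \emph{strictly} separated so that the cut-off radii $s_i$ can be inserted between them; shrinking each $J_k$ to $[1-1/k,\,1-1/(k+1)-\eta_k]$ for small $\eta_k$ fixes this without affecting any density computation.
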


\begin{proof}The assertion (1) is contained in \cite[Lemma 9.5]{gp}. Let us prove (3). According to \cite[Lemma 9.5]{gp}, there exist pairwise disjoint subsets $A(l,n)$, $l,n\geq 1$, of $\N$, with positive lower density, such that for any $j\in A(l,n)$ and $j'\in A(l',n')$, one has
	\begin{equation}\label{eq-lem-unif-density}j\geq n\quad \text{and}\quad |j-j'|\geq n+n'\quad\text{if}\quad j\neq j'.
	\end{equation}
	For any $l,n\geq 1$, let us write
	\[
	A(l,n)=\{j_m^{l,n}\ ,\ m\geq 1\},
	\]
	where $(j_m^{l,n})_m$ is increasing. Now, for $l,n,m\geq 1$, we set
	\[
	\Gamma(l,n,m):=\left[1-\frac{1}{j_m^{l,n}}-\frac{1}{3}\left(\frac{1}{j_m^{l,n}-1}-\frac{1}{j_m^{l,n}}\right),
	1-\frac{1}{j_m^{l,n}}+\frac{1}{3}\left(\frac{1}{j_m^{l,n}}-\frac{1}{j_m^{l,n}+1}\right)
	\right]
	\]
	and for $l,n\geq 1$,
	\[
	\Gamma(l,n):=\bigcup_{m\geq 1}\Gamma(l,n,m).
	\]
	Using \eqref{eq-lem-unif-density}, it is not difficult to check that the sets $\Gamma(l,n)$, $l,n\geq 1$, are pairwise disjoint and that $r\geq 1-\frac{1}{n-1}$ for any $r\in \Gamma(l,n)$. This shows that the family $\{\Gamma(l,n,m):\,l,n,m\in \N\}$ is locally finite. Now, let us fix $l,n\geq 1$ and simply denote $j_m^{l,n}=j_m$, $m\geq 1$. An easy calculation leads to
	\[
	|\Gamma(l,n)|=\frac{2}{3}\sum_{m\geq 1}\frac{1}{j_m^2-1}.
	\]
	Now, taking into account that $A(l,n)$ has positive lower density, there exist $M>1$ such that $m\leq j_m\leq Mm$, $m\geq 1$. Therefore, for any integer $N\geq 1$, we get
	\begin{eqnarray*}
		N\left|\Gamma(l,n)\cap [1-1/N,1)\right| & \geq &
		\frac{2N}{3}\sum_{\substack{m\geq 1\\j_m\geq N}}\frac{1}{j_m^2-1}\\
		&\geq & \frac{2N}{3}\sum_{m\geq N}\frac{1}{M^2m^2}\\
		&\geq & \frac{2N}{3M^2N}=\frac{2}{3M^2}.
	\end{eqnarray*}
	This finishes the proof of (3).
	
	The proof of (2) is standard and similar to that of (4), so we only check (4). By induction, one can build two increasing sequences $(N_i)_i$ and $(M_i)_i$ of positive integers such that $2^{N_i-M_i}\to 0$ as $i\to \infty$ and such that $N_{i+1}>M_i$, $i\in \N$. Then we set $\Gamma_i=[1-2^{-N_i},1-2^{-M_i}]$ and fix increasing sequences $(u_{m}(l,n))_m$, $l,n\in \N$, such that $u_m(l,n)\neq u_{m'}(l',n')$ whenever $(m,l,n)\neq (m',l',n')$. It is now easily checked that the family $\{\Gamma_{u_m(l,n)}:\,l,n,m\in \N\}$ is locally finite and that the sets
	\[
	\Gamma(n,l):=\bigcup_{m\in \N}\Gamma_{u_m(l,n)},\quad l,n\in \N
	\]
	have upper density equal to $1$.
\end{proof}

\begin{rem}{\rm We notice that if $\{\Gamma(l,n,m):\,l,n,m\in \N\}$ is a family of pairwise disjoint segments in $[0,1)$ such that $\underline{d_u}(\Gamma(l,n))>0$ (resp. $\overline{d_u}(\Gamma(l,n))>0$) for any $l,n\in \N$, then every set $\Gamma(l,n)\cap [r,1)$, $l,n\in \N$, $r\in[0,1)$, also has positive uniform lower density (resp. uniform upper density)  (and is of course infinite).}
\end{rem}

We can immediately deduce from Proposition \ref{key-ingredient-FU} and the previous lemma the existence of functions satisfying Definitions \ref{defi-FU-Abel} or \ref{defi-unif-FU-Abel}. But we can also say a bit more. We recall that a subset $A$ of a Fréchet space is first category if it is contained in the complement of a residual set.

\begin{coro}\label{coro-density}$\,$
	\begin{enumerate}
		\item The sets $\mathcal{FU}^u_A(\mathbb{D},\varrho)$ and $\mathcal{FU}^u_A(\mathbb{D})$ are residual in $H(\D)$.
		\item The sets $\mathcal{FU}_A(\mathbb{D},\varrho)$ and $\mathcal{FU}_A(\mathbb{D})$ are dense and first category in $H(\D)$.
	\end{enumerate}
\end{coro}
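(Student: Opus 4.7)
The strategy is to exhibit a dense $G_\delta$ subset of each upper-frequent class via Baire's theorem, and to derive meagerness of the lower-frequent classes by producing a residual set disjoint from them. The key input is Proposition \ref{key-ingredient-FU} combined with the density configurations of Lemma \ref{lem-sets-density-utiles}. Throughout, the Mergelyan-type reduction from arbitrary $(K,V)$ with $K\subset\T$, $K\ne\T$, and $V\in\VV(K)$ to the countable basis $\{(K_l,V_n(l))\}$ is routine: any such $K$ sits in some $K_l$, and any such $V$ contains the restriction to $K$ of a suitable $V_n(l)$.

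\textbf{Residuality (Part 1).} I treat $\mathcal{FU}^u_A(\D,\varrho)$; the case of $\mathcal{FU}^u_A(\D)$ is parallel with $\N$ replaced by $[0,1)$ and ordinary density replaced by uniform density. For every $(l,n,k,N_0)\in\N^4$ with $k\geq 1$ define
\[
W_{l,n,k,N_0} \;=\; \bigcup_{N\geq N_0}\bigl\{f\in H(\D):\;\bigl|N_f(V_n(l),K_l,\varrho)\cap[1,N]\bigr|\geq (1-1/k)N\bigr\}.
\]
Each inner set decomposes as a finite union, over $J\subset\{1,\dots,N\}$ with $|J|\geq(1-1/k)N$, of the open conditions $\bigcap_{i\in J}\{f:\,f\circ\phi_{r_i}\in V_n(l)\}$, hence is open. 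Any $f\in\mathcal{G}:=\bigcap_{l,n,k,N_0}W_{l,n,k,N_0}$ satisfies $\overline{d}(N_f(V_n(l),K_l,\varrho))=1$ for every $(l,n)$, and the Mergelyan reduction yields $\mathcal{G}\subset\mathcal{FU}^u_A(\D,\varrho)$. For density of each $W_{l,n,k,N_0}$, apply Lemma \ref{lem-sets-density-utiles}(2) to produce pairwise disjoint $A(l',n')\subset\N$ with $\overline{d}(A(l',n'))=1$; enumerate $A(l',n')=\{j_m^{l',n'}:\,m\geq 1\}$ and set $\Gamma(l',n',m):=\{r_{j_m^{l',n'}}\}$. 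These (degenerate) segments in $[0,1)$ are pairwise disjoint and locally finite since $r_j\to 1$. Given $g\in H(\D)$, a closed disc $D$, and $\varepsilon>0$, Proposition \ref{key-ingredient-FU} supplies $f$ with $\sup_D|f-g|<\varepsilon$ and $N_f(V_{n'}(l'),K_{l'},\varrho)\supset A(l',n')\setminus F(l',n')$ for some finite $F(l',n')$. This forces $\overline{d}(N_f(V_{n'}(l'),K_{l'},\varrho))=1$ for every $(l',n')$, so $f\in W_{l,n,k,N_0}$; thus $\mathcal{G}$ is a dense $G_\delta$.

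\textbf{Density and meagerness (Part 2).} Density of $\mathcal{FU}_A(\D,\varrho)$ is the previous construction executed once: Lemma \ref{lem-sets-density-utiles}(1) furnishes disjoint $A(l,n)\subset\N$ with $\underline{d}(A(l,n))>0$, and Proposition \ref{key-ingredient-FU} applied to $\Gamma(l,n,m)=\{r_{j_m^{l,n}}\}$ yields $f$ near $g$ on $D$ with $\underline{d}(N_f(V_n(l),K_l,\varrho))\geq\underline{d}(A(l,n))>0$ for every $(l,n)$, whence $f\in\mathcal{FU}_A(\D,\varrho)$ after the Mergelyan reduction; replacing (1) by (3) of the lemma gives density of $\mathcal{FU}_A(\D)$. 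Meagerness is obtained by pitting the class against the residuality of Part (1). Fix a pair $(l_0,n_0)$ and choose $n_1$ such that $V_{n_1}(l_0)\cap V_{n_0}(l_0)=\emptyset$ (possible since the balls $V_n(l_0)$ generate a Hausdorff topology). By the argument of Part (1) applied to the single pair $(l_0,n_1)$, the set
\[
\mathcal{B} \;:=\; \bigl\{f\in H(\D):\;\overline{d}\bigl(\{n:\,f\circ\phi_{r_n}\in V_{n_1}(l_0)\}\bigr)=1\bigr\}
\]
is residual. For $f\in\mathcal{B}$ one has $\overline{d}(\N\setminus N_f(V_{n_0}(l_0),K_{l_0},\varrho))=1$, so $\underline{d}(N_f(V_{n_0}(l_0),K_{l_0},\varrho))=0$, hence $f\notin\mathcal{FU}_A(\D,\varrho)$. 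Therefore $\mathcal{FU}_A(\D,\varrho)\subset H(\D)\setminus\mathcal{B}$ is meager; the uniform-density variant of this argument, using Lemma \ref{lem-sets-density-utiles}(4) in place of (2), handles $\mathcal{FU}_A(\D)$.

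The main obstacle is pure bookkeeping: carefully unfolding the openness of each $W_{l,n,k,N_0}$ as a finite disjunction of open evaluation conditions, and effectuating the Mergelyan reduction from the fixed basis $\{(K_l,V_n(l))\}$ to arbitrary $(K,V)$. No deep new idea is needed once Proposition \ref{key-ingredient-FU} and Lemma \ref{lem-sets-density-utiles} are in hand.
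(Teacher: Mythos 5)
Your proof is correct and follows essentially the same route as the paper: density of the ``upper density equal to $1$'' classes via Proposition \ref{key-ingredient-FU} combined with parts (2)/(4) of Lemma \ref{lem-sets-density-utiles}, a $G_\delta$ representation by countable intersections of open sets for residuality, density of the lower-density classes via parts (1)/(3), and meagerness from the identity $\underline{d}(E)=1-\overline{d}(\N\setminus E)$ applied with two disjoint basic balls. The only difference is presentational: you make explicit the choice of a ball $V_{n_1}(l_0)$ disjoint from $V_{n_0}(l_0)$ and the resulting residual set $\mathcal{B}$, which the paper leaves implicit.
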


\begin{proof}Setting
	\[
	\Gamma(l,n)=\{r_k:\,k\in A(l,n)\},
	\]
	where the sets $A(l,n)$, $l,n\in \N$, are given by point (2) of Lemma \ref{lem-sets-density-utiles}, and applying Proposition \ref{key-ingredient-FU}, we get that the set $\mathcal{FU}^{u=1}_A(\mathbb{D},\varrho)$ consisting of those $f\in H(\D)$ such that $\overline{d}(N_f(V,K,\varrho))=1$ for any compact set $K\subset \T$ different from $\T$ and any $V\in \VV(K)$, is dense in $H(\D)$. Similarly, if the sets $A(l,n)$, $l,n\in \N$, are instead given by point (4), then Proposition \ref{key-ingredient-FU} gives us that the set $\mathcal{FU}^{u=1}_A(\mathbb{D})$ consisting of those $f\in H(\D)$ such that $\overline{d_u}(N_f(V,K))=1$ for any compact set $K\subset \T$ different from $\T$ and any $V\in \VV(K)$, is dense in $H(\D)$.
	
	Moreover, in the same way, Proposition \ref{key-ingredient-FU} and points (1) and (3) of Lemma \ref{lem-sets-density-utiles} immediately tell us that the sets $\mathcal{FU}_A(\mathbb{D},\varrho)$ and $\mathcal{FU}_A(\mathbb{D})$ are dense in $H(\D)$. Since $\underline{d}(E)=1-\overline{d}(\N\setminus E)$, $E\in \N$ (resp. $\underline{d_u}(\Gamma)=1-\overline{d_u}([0,1)\setminus \Gamma)$, $\Gamma\subset [0,1)$), the corollary is proved once we have shown that the sets $\mathcal{FU}^{u=1}_A(\mathbb{D},\varrho)$ and $\mathcal{FU}^{u=1}_A(\mathbb{D})$ are $G_{\delta}$ in $H(\D)$. The proof for both sets being very similar, we only deal with $\mathcal{FU}^{u=1}_A(\mathbb{D})$.
	
	To do so, note that
	\[
	\mathcal{FU}^{u=1}_A(\mathbb{D})=\bigcap_{l\in \N}\bigcap_{n\in \N}\bigcap_{m\geq 1}\bigcap_{k\geq 1}\bigcup_{N\geq k}\left\{f\in H(\D):\,\left|N_f(V_n(l),K_l)\right|\geq \left(1-\frac{1}{m}\right)N\right\}.
	\]
	Since each set $\left\{f\in H(\D):\,\left|N_f(V_n(l),K_l)\right|\geq \left(1-\frac{1}{m}\right)N\right\}$ is easily seen to be open, the proof is complete.
\end{proof}

\begin{rem}{\rm The previous proof shows that the sets $\mathcal{FU}^{u=1}_A(\mathbb{D},\varrho)$ and $\mathcal{FU}^{u=1}_A(\mathbb{D})$ are $G_{\delta}$ subsets of $H(\D)$, a conclusion which is stronger than that stated in point (1) of Corollary \ref{coro-density}. We mention that it was shown in \cite{MouMunGlasgow} that \emph{all} universal Taylor series are upper frequently universal, with upper densities of the relevant sets equal to $1$.}
\end{rem}

Since $\mathcal{FU}^{u=1}_A(\mathbb{D},\varrho)$ and $\mathcal{FU}^{u=1}_A(\mathbb{D})$ are residual in $H(\D)$ (see the proof of Corollary \ref{coro-density} for the definition of these sets), it is not difficult to check that
\begin{eqnarray*}
	H(\D) & = & \mathcal{FU}^{u=1}_A(\mathbb{D},\varrho) + \mathcal{FU}^{u=1}_A(\mathbb{D},\varrho)\\
	& = & \mathcal{FU}^u_A(\mathbb{D}) + \mathcal{FU}^u_A(\mathbb{D}).
\end{eqnarray*}
In fact, it is also true that any function in $H(\D)$ can be written as the sum of two functions in  $\mathcal{FU}_A(\mathbb{D})$ or in $\mathcal{FU}_A(\mathbb{D},\varrho)$, even if the latter sets are first category. This is contained in the next proposition.

\begin{prop} We have 
	\begin{eqnarray*}
		H(\D) & = & \mathcal{FU}_A(\mathbb{D},\varrho) + \mathcal{FU}_A(\mathbb{D},\varrho)\\
		& = & \mathcal{FU}_A(\mathbb{D}) + \mathcal{FU}_A(\mathbb{D}).
	\end{eqnarray*}
\end{prop}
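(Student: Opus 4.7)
The plan is, for a given $f \in H(\D)$, to construct $g_1 \in \mathcal{FU}_A(\D,\varrho)$ such that $g_2 := f - g_1$ also lies in $\mathcal{FU}_A(\D,\varrho)$; the $\mathcal{FU}_A(\D)$ statement is then handled identically, replacing Lemma \ref{lem-sets-density-utiles}(1) by (3), singletons by segments, and lower density by uniform lower density throughout. The key reformulation is that the condition $(f - g_1) \circ \phi_r \in V_n(l)$ is equivalent to $g_1 \circ \phi_r$ lying within $\delta_n$ of $f\circ \phi_r - g_n|_{K_l}$ in $\CC(K_l)$; since $g_n$ is entire, the uniform continuity estimate $\sup_{\zeta \in K_l}|g_n(\phi_r(\zeta)) - g_n(\zeta)| < \delta_n/4$ holds for $r$ close enough to $1$ (exactly as in \eqref{eq-supp-sec-6}), so up to halving $\delta_n$ this condition reduces to $g_1 \circ \phi_r$ approximating the $H(\D)$-function $(f - g_n) \circ \phi_r$ uniformly on $K_l$. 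Hence the two memberships $g_1, g_2 \in \mathcal{FU}_A(\D,\varrho)$ translate into a single construction problem in which, on one half of the relevant radii, $g_1 \circ \phi_r$ should approximate $g_n$ on $K_l$, and on the other half, $g_1\circ \phi_r$ should approximate $(f - g_n)\circ \phi_r$ on $K_l$.

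To set the stage, I would apply Lemma \ref{lem-sets-density-utiles}(1) with the enlarged countable index set $\{(s, l, n):\,s\in\{1,2\},\,l,n\in\N\}$, obtaining pairwise disjoint subsets $A_s(l,n) \subset \N$ of positive lower density. Set $\Gamma_s(l,n) := \{r_k : k \in A_s(l,n)\}$, enumerate the singletons in $\bigsqcup_{s,l,n}\Gamma_s(l,n)$ as an increasing sequence $(\tilde\Gamma_j)_{j\in\N}$ ordered by $r$-value, and tag each $\tilde\Gamma_j = \{\rho_j\}$ with the unique triple $(s_j, l_j, n_j)$ such that $\rho_j \in \Gamma_{s_j}(l_j, n_j)$; then choose closed discs $D_j$ centred at $0$ exactly as in the proof of Proposition \ref{key-ingredient-FU}, so that $\tilde\Gamma_{j-1} \subset D_j$ while $\rho_j$ lies outside $D_j$.

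Next comes the modified induction. I would build polynomials $P_j$ and positive reals $\eta_j$ with $\sum_j \eta_j<\infty$, requiring $\sup_{D_j}|P_j| \leq \eta_j$ and the following approximation at step $j$: if $s_j = 1$, the original condition $\sup_{\zeta\in K_{l_j}}\bigl|\sum_{k\leq j}P_k(\phi_{\rho_j}(\zeta)) - g_{n_j}(\zeta)\bigr|<\delta_{n_j}/4$; if $s_j = 2$, instead $\sup_{\zeta\in K_{l_j}}\bigl|\sum_{k\leq j}P_k(\phi_{\rho_j}(\zeta)) - (f - g_{n_j})(\phi_{\rho_j}(\zeta))\bigr|<\delta_{n_j}/4$. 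Since $f - g_{n_j}$ is holomorphic on $\D$, it is holomorphic on a neighbourhood of the compact set $D_j \cup \phi_{\rho_j}(K_{l_j})$, which has connected complement (as $K_{l_j} \neq \T$), so Runge's theorem furnishes $P_j$ in both cases exactly as in the unmodified construction. Setting $g_1 := \sum_{j\geq 0} P_j \in H(\D)$, the telescoping argument at the end of the proof of Proposition \ref{key-ingredient-FU} then yields, for every sufficiently large $k \in A_1(l,n)$, the membership $g_1 \circ \phi_{r_k} \in V_n(l)$, and for every sufficiently large $k \in A_2(l,n)$, the estimate $\sup_{K_l}|g_1 \circ \phi_{r_k} - (f - g_n)\circ \phi_{r_k}| < \delta_n/2$, whence $(f - g_1) \circ \phi_{r_k} \in V_n(l)$ by a final invocation of the uniform continuity of $g_n$.

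The main technical point is the bookkeeping of two kinds of targets inside a single induction while preserving control of future tails $\sum_{k > j}P_k$ on the growing discs $D_j$. Fortunately the new target $f - g_{n_j}$ is still a \emph{fixed} element of $H(\D)$, so its Runge approximation on compacta in $\D$ is no more delicate than that of the entire $g_{n_j}$ in the original argument; the quantitative estimates (a)--(d) from the proof of Proposition \ref{key-ingredient-FU} transfer verbatim. The only cost is that finitely many $k \in A_s(l,n)$ may lie below the $r$-threshold required by \eqref{eq-supp-sec-6}, but removing finitely many elements leaves the lower density of $A_s(l,n)$ unchanged, completing the argument.
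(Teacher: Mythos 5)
Your proposal is correct and follows essentially the same route as the paper: the paper also splits each positive-lower-density set $A(l,n)$ from Lemma \ref{lem-sets-density-utiles}(1) (resp.\ the segments from point (3)) into two halves of positive (uniform) lower density and reruns the construction of Proposition \ref{key-ingredient-FU}, sending $g_1\circ\phi_{r_k}$ into $V_n(l)$ on one half and into $h\circ\phi_{r_k}-V_n(l)$ on the other so that both $g_1$ and $h-g_1$ are frequently Abel universal. Your write-up is in fact more detailed than the paper's sketch (which defers to the analogue of \cite[Theorem 5.9]{BonGe}), and your observation that the new target $f-g_{n}$ is a fixed element of $H(\D)$, so Runge's theorem applies on $D_j\cup\rho_jK_{l_j}$ exactly as before, is precisely the point the paper leaves implicit.
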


\begin{proof} The proofs of the two equalities being analogous to that of \cite[Theorem 5.9]{BonGe}, we only sketch the proof of the first equality. Let $h$ be in $H(\mathbb{D})$ and let $A(l,n)$, $l,n\in \N$, be given by point (1) of Lemma \ref{lem-sets-density-utiles}. We split each $A(l,n)$ into two disjoint sets $A^1(l,n),A^2(l,n)$ with positive lower density. Proceeding as in the proof of Theorem \ref{key-ingredient-FU}, we build $f\in H(\D)$ such that for any $k\in A^1(l,n)$, $f\circ \phi_{r_k}\in V_n(l)$ and for any $k\in A^2(l,n)$, $f\circ \phi_{r_k} \in h\circ \phi^{-1}_{r_k}- V_n(l)$, where $h\circ \phi^{-1}_{r_k}- V_n(l):=\{h\circ \phi^{-1}_{r_k} - g:\,g\in V_n(l)\}$. Then $f$ and $h-f$ belong to $\mathcal{FU}_A(\mathbb{D},\varrho)$ and we obviously have $h=f+(h-f)$.
\end{proof}

\medskip

We shall now continue our illustration of how flexible the notion of Abel universal functions is. It is usually difficult to prove results of \emph{common frequent universality} (see \cite{Bay-com,CEMM}). In the next two results, we are interested in common (frequent) $\varrho(\lambda)$-Abel universality with respect to general sequences $(\rho(\lambda))_{\lambda\in\Lambda}$, $\Lambda\subset \R$. First we state a result for common universality inspired by \cite[Theorem 11.5]{gp}. We shall say that a set is $\sigma$-compact if it is a countable union of compact sets.

\begin{prop}\label{prop1-common-general} Let $\Lambda\subset\mathbb{R}$ be a $\sigma$-compact set and $(\rho(\lambda))_{\lambda\in\Lambda}$ a family of increasing sequences $(r_n(\lambda))_n$. We assume that $r_n(\lambda) \to 1$ for any $\lambda \in \Lambda$ and that the map $\lambda \mapsto r_n(\lambda)$ is continuous on $\Lambda$ for any $n\in \N$. Then
	\[
	\bigcap_{\lambda\in\Lambda}\mathcal{U}_A(\mathbb{D}, \varrho(\lambda))\hbox{ is a }G_\delta\hbox{-dense set in }H(\mathbb{D}).
	\]
\end{prop}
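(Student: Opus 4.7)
The plan is to express $\bigcap_{\lambda\in\Lambda}\UU_A(\D,\varrho(\lambda))$ as a countable intersection of dense open subsets of $H(\D)$ and conclude by Baire's theorem. Using the $\sigma$-compactness of $\Lambda$, I would write $\Lambda=\bigcup_j K_j^{\Lambda}$ with each $K_j^{\Lambda}$ compact, and rewrite the intersection through the countable basis $(V_n(l))_{l,n}$ as
\[
\bigcap_{\lambda\in\Lambda}\UU_A(\D,\varrho(\lambda))=\bigcap_{l,n,j}\Omega(l,n,K_j^{\Lambda}),
\]
where
\[
\Omega(l,n,K^{\Lambda}):=\{f\in H(\D):\ \forall \lambda\in K^{\Lambda},\ \exists k\in\N,\ f\circ\phi_{r_k(\lambda)}\in V_n(l)\}.
\]
It then suffices to show that each $\Omega(l,n,K^{\Lambda})$ is open and dense in $H(\D)$.

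Openness is a soft argument. For fixed $k$, the map $(f,\mu)\mapsto f\circ \phi_{r_k(\mu)}$ is jointly continuous from $H(\D)\times K^{\Lambda}$ to $\CC(K_l)$, continuity in $\mu$ coming from the assumed continuity of $\mu\mapsto r_k(\mu)$ combined with the uniform continuity of $f$ on compact subdiscs of $\D$. Given $f_0\in\Omega(l,n,K^{\Lambda})$ and a witness $k(\lambda)\in\N$ for each $\lambda$, joint continuity produces neighborhoods $U_\lambda\subset K^{\Lambda}$ of $\lambda$ and $W_\lambda\subset H(\D)$ of $f_0$ on which $f\circ\phi_{r_{k(\lambda)}(\mu)}\in V_n(l)$ holds for every $(f,\mu)\in W_\lambda\times U_\lambda$. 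Compactness of $K^{\Lambda}$ then provides a finite subcover $U_{\lambda_1},\dots,U_{\lambda_p}$, and $\bigcap_i W_{\lambda_i}$ is an open neighborhood of $f_0$ contained in $\Omega(l,n,K^{\Lambda})$.

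The main obstacle is the density of $\Omega(l,n,K^{\Lambda})$. The key observation is that the hypotheses fit Dini's theorem exactly: the sequence $(\lambda\mapsto r_n(\lambda))_n$ is increasing, made of continuous functions, and converges pointwise to the continuous constant function $1$ on the compact set $K^{\Lambda}$, hence uniformly. Given $g\in H(\D)$, a closed disc $D$ centred at $0$ and $\varepsilon>0$, I would follow the convention from the proof of Proposition \ref{key-ingredient-FU} and write $V_n(l)$ as a ball $B(g_n,\delta_n)$ with $g_n$ entire, then use uniform continuity of $g_n$ to pick $r_0\in[0,1)$ with $\sup_{r\in[r_0,1]}\sup_{\zeta\in K_l}|g_n(r\zeta)-g_n(\zeta)|<\delta_n/2$. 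Dini's theorem allows one to choose $N$ so large that $J:=\{r_N(\lambda):\lambda\in K^{\Lambda}\}\subset[a,b]\subset[r_0,1)$, where $a:=\min_{K^{\Lambda}} r_N$, $b:=\max_{K^{\Lambda}} r_N$, and $a$ exceeds the radius of $D$.

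Consider the compact set $S:=\{r\zeta:r\in[a,b],\,\zeta\in K_l\}$, disjoint from $D$. Since $K_l\neq \T$, the complement of $D\cup S$ in $\C$ is connected: the inner annulus between $D$ and $\{|z|=a\}$, the exterior $\{|z|>b\}$, and the annular sectors $\{a\leq|z|\leq b,\ z/|z|\in\T\setminus K_l\}$ all link up via radial paths passing through the gaps in $\T\setminus K_l$. Defining $\tilde f:=g$ on $D$ and $\tilde f:=g_n$ on $S$ produces a function continuous on $D\cup S$ and holomorphic in its interior, so Mergelyan's theorem yields $f\in H(\D)$ with $\sup_{D\cup S}|f-\tilde f|<\min(\varepsilon,\delta_n/2)$. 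The triangle inequality
\[
|f(r\zeta)-g_n(\zeta)|\leq|f(r\zeta)-g_n(r\zeta)|+|g_n(r\zeta)-g_n(\zeta)|<\delta_n,
\]
valid for every $r\in[a,b]$ and $\zeta\in K_l$, shows $f\circ\phi_r\in V_n(l)$, so in particular $f\circ\phi_{r_N(\lambda)}\in V_n(l)$ for every $\lambda\in K^{\Lambda}$, placing $f$ in $\Omega(l,n,K^{\Lambda})$ while satisfying $\sup_D|f-g|<\varepsilon$. Baire's theorem applied to the countable family $\{\Omega(l,n,K_j^{\Lambda})\}$ of dense open sets then completes the proof.
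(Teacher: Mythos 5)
Your proposal is correct and follows essentially the same route as the paper's proof: the same countable decomposition over a compact exhaustion of $\Lambda$ and a basis of $\CC(K_l)$, the same compactness-plus-continuity argument for openness, and the same use of Dini's theorem followed by Mergelyan's theorem on a disc united with an annular sector over $K_l$ to prove density. The only cosmetic difference is that you approximate on the compact set $[a,b]K_l$ determined by a single index $N$, whereas the paper approximates on $[\tilde r,1]K_l$; both yield the same conclusion.
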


\begin{proof} Let $(I_m)_m$ be a sequence of compact sets whose union is $\Lambda$. Clearly we have
	\[
	\bigcap_{\lambda\in\Lambda}\mathcal{U}_A(\mathbb{D}, \varrho(\lambda))=\bigcap_{j,l,m,s\geq 1}E(j,l,m,s),
	\]
	where 
	\[
	E(j,l,m,s)= \left\{f\in H(\mathbb{D}):\,\forall\lambda\in I_m,\, \exists n\geq 0,\,	\sup_{\zeta\in K_l}\vert f(r_{n}(\lambda)\zeta)-\varphi_j(\zeta)\vert<1/s\right\}.
	\]
	By compactness of $I_m$ and continuity of $\lambda \mapsto r_n(\lambda)$ on $\Lambda$, $n\in \N$, each set $E(j,l,m,s)$, $j,l,m,s\geq 1$, is open. By Baire Category Theorem, it is enough to prove that $E(j,l,m,s)$ is dense in $H(\mathbb{D})$ for any $j,l,m,s\geq 1$. Let then fix $j,l,m,s\geq 1$ and $g\in H(\mathbb{D})$, $0<r<1$ and $\varepsilon >0$. Let us set $r<\tilde{r}<1$. Now observe that the assumptions and Dini's theorem imply that $(r_n(\cdot))_n$ tends to $1$ uniformly on each compact set $I_m$. Therefore there exists $c_{n,I_m}>0$ with $c_{n,I_m}\rightarrow 0$ as $n\rightarrow +\infty$ so that $\vert r_n(\lambda)-1\vert\leq c_{n,I_m}$ for any $\lambda \in I_m$, whence one can find $N_1\geq 1$ such that 
	\[
	\forall\lambda\in I_m,\ \forall n\geq N_1,\ r_n(\lambda)\geq \tilde{r}.
	\]
	By Mergelyan's theorem there exists a polynomial $f$ such that
	\[
	\sup_{\vert z\vert\leq r}\vert f(z)-g(z)\vert<\varepsilon\hbox{ and }
	\sup_{z\in[\tilde{r},1]K_l}\vert f(z)-\varphi_j(z)\vert<1/2s.
	\]
	We get, for any $\lambda\in I_m$ and any $n\geq N_1$,
	\[
	\begin{array}{rcl}\displaystyle\sup_{\zeta\in K_l}\vert f(r_{n}(\lambda)\zeta)-\varphi_j(\zeta)\vert&\leq &\displaystyle
	\sup_{\zeta\in K_l}\vert f(r_{n}(\lambda)\zeta)-\varphi_j(r_{n}(\lambda)\zeta)\vert+\sup_{\zeta\in K_l}\vert \varphi_j(r_{n}(\lambda)\zeta)-\varphi_j(\zeta)\vert\\
	&\leq&\displaystyle
	\frac{1}{2s}+\sup_{\zeta\in K_l}\vert \varphi_j(r_{n}(\lambda)\zeta)-\varphi_j(\zeta)\vert.\end{array}
	\]
	Using again the property $\vert r_n(\lambda)-1\vert\leq c_{n,I_m}$ with $c_{n,I_m}\rightarrow 0$  one can find $N_2\geq N_1$ such that for all $\lambda \in I_m$ and $n\geq N_2$, $\sup_{\zeta\in K_l}\vert \varphi_j(r_{n}(\lambda)\zeta)-\varphi_j(\zeta)\vert <1/2s.$ This completes the proof.
\end{proof}

\medskip

We saw that Proposition \ref{key-ingredient-FU} allows to exhibit $\varrho$-frequently Abel universal functions for any sequence $\varrho=(r_n)_n$ (with $r_n\to 1$ as $n\to \infty$). Actually, it can similarly allow us to prove the existence of functions $f\in H(\D)$ that are $\varrho(l)$-frequently Abel universal simultaneously for countably many sequences $\varrho(l)$, $l\in \N$, up to some assumptions. This is the content of the next proposition, whose proof is left to the reader.

\begin{prop}\label{prop-comFUA}For $l\in \N$, let $\varrho(l)=(r_n^l)_n$ be increasing sequences in $(0,1)$ such that $r_n^l\to 1$ as $n\to \infty$. We assume that for any $l\neq l'$, $\{r_n^l:\,n\in \N\}\cap \{r_n^{l'}:\,n\in \N\}=\emptyset$. Then
\[
\bigcap_{l\in \N}\mathcal{FU}_A(\D,\varrho(l))\neq \emptyset.
\]
\end{prop}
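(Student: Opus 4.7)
\medskip

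The plan is to build $f$ by a single application of Proposition \ref{key-ingredient-FU} in which the family of segments simultaneously encodes the desired positive lower density requirement for every sequence $\varrho(l')$. Concretely, for each $l'\in \N$, I would first invoke Lemma \ref{lem-sets-density-utiles}(1) to obtain pairwise disjoint subsets $A(l',l,n)\subset \N$, for $(l,n)\in \N^2$, each with positive lower density in $\N$. The index $l'$ will play no role in the disjointness within a fixed sequence; disjointness across distinct $l'$ will be inherited from the hypothesis that the ranges $\{r_n^{l'}:n\in\N\}$ and $\{r_n^{l''}:n\in\N\}$ are disjoint whenever $l'\neq l''$.

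Next, for each $l'\in \N$, I would choose $N(l')\in \N$ so large that $r_k^{l'}>1-2^{-l'}$ for every $k\geq N(l')$, and replace $A(l',l,n)$ by $A(l',l,n)\cap[N(l'),\infty)$; this does not change the lower density. Enumerating $A(l',l,n)=\{j_1^{l',l,n}<j_2^{l',l,n}<\cdots\}$, I then set, for each $(l,n)\in \N^2$, the zero-length segments
\[
\Gamma(l,n,m):=\bigl\{r_{j_{m'}^{l',l,n}}^{l'}\bigr\},
\]
where $m$ is obtained from $(l',m')\in \N^2$ via any bijection with $\N$, and $\Gamma(l,n):=\bigcup_{m}\Gamma(l,n,m)$. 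I need to check that the family $\{\Gamma(l,n,m):l,n,m\in\N\}$ consists of pairwise disjoint singletons and is locally finite in $[0,1)$. Disjointness follows from the disjointness of the $A(l',l,n)$ in $\N$ for fixed $l'$ and from the disjointness of the ranges across different $l'$. For local finiteness, given $\rho<1$, pick $L$ with $1-2^{-L}>\rho$; then for every $l'\geq L$ no element of $\Gamma(l,n)$ coming from $\varrho(l')$ lies in $[0,\rho]$, while for each $l'<L$ only finitely many indices $k$ satisfy $r_k^{l'}\leq \rho$, producing only finitely many singletons in $[0,\rho]$ from those sequences.

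Proposition \ref{key-ingredient-FU} then supplies $f\in H(\D)$ together with real numbers $r(l,n)\in [0,1)$ such that $f\circ \phi_r \in V_n(l)$ for every $(l,n)\in \N^2$ and every $r\in \Gamma(l,n)\cap[r(l,n),1)$. To conclude, fix $l'\in \N$ and any compact set $K\subset \T$ different from $\T$ and any $V\in \VV(K)$; choose $l,n$ with $K\subset K_l$ and $V_n(l)\subset V$. Then
\[
N_f(V,K,\varrho(l'))\supset \bigl\{k\in A(l',l,n):\,r_k^{l'}\geq r(l,n)\bigr\},
\]
and the right-hand side differs from $A(l',l,n)$ by a finite set (since $r_k^{l'}\to 1$), so it has the same positive lower density. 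Hence $f\in \mathcal{FU}_A(\D,\varrho(l'))$ for every $l'$.

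The main delicate point is ensuring local finiteness once the countable collection of sequences $\varrho(l')$ is merged into a single family of segments: without the cutoff $r_k^{l'}>1-2^{-l'}$, the union could accumulate on arbitrary compact subsets of $[0,1)$ and Proposition \ref{key-ingredient-FU} would no longer apply. The rest of the argument is an essentially bookkeeping exercise on densities that parallels the proof of Corollary \ref{coro-density}.
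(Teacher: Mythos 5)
Your proof is correct and follows exactly the route the paper intends (the paper leaves this proof to the reader, indicating only that it follows from Proposition \ref{key-ingredient-FU} as in Corollary \ref{coro-density}): you merge the singletons $\bigl\{r_k^{l'}\bigr\}$, $k\in A(l',l,n)$, into a single pairwise disjoint family, with disjointness across $l'$ coming from the hypothesis on the ranges and local finiteness secured by the tail cutoff $N(l')$, which is indeed the one point that needs care. The only cosmetic imprecision is the phrase ``$V_n(l)\subset V$'', which should be read as ``$g|_{K_l}\in V_n(l)$ implies $g|_K\in V$'' for a suitable choice of $n$ and $l$ --- the same shorthand the paper itself uses in Corollary \ref{coro-density}.
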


So far we know no \textit{general} result of common $\varrho(\lambda)$-frequent Abel universality, where $\lambda$ belongs to some \textit{uncountable} families of sequences. This seems to us to be an interesting problem.

We shall end the paper by pointing out, however, that frequently Abel universal functions are easily seen to be automatically frequently universal for some uncountable families of composition operators. More precisely let us consider a continuous function $h:(0,1]\times [0,1)\rightarrow \mathbb (0,1)$ such that $h(1,r)=r$, $h(a,r)\rightarrow 1$ as $a\rightarrow 0$ and $h(a,r)\rightarrow 1$ as $r\rightarrow 1$. Assume that for all $a\in (0,1]$, the function $h_a:=h(a,\cdot):[0,1)\rightarrow \mathbb (0,1)$ is increasing, differentiable on $[0,1)$ and
\[
c\leq \liminf _{r\to 1} h_a'(r) \leq \limsup_{r\to 1} h_a'(r) \leq C,
\]
where $c,C$ are positive constants (possibly depending on $a$).
Let us denote by $h_a^{-1}$ the reciprocal function of $h_a$. For $a\in (0,1]$ and $r\in [0,1)$, we define $\phi^a_{h,r}:=\phi_{h(a,r)}$. Let us say that $f\in H(\D)$ is universal (resp. frequently universal, resp. upper frequently universal) for the family $\{\phi^a_{h,r}:\,r\in [0,1)\}$, $a\in (0,1]$ fixed, if for every compact set $K\subset \T$ different from $\T$, and any $V\in \VV(K)$, the set
\[
N^a_f(V,K):=\{r\in [0,1):\,f\circ \phi^a_{h,r}\in V\}
\]
is non-empty (resp. has positive uniform lower density, resp. has positive uniform upper density). Note that, by definition, $f$ is universal (resp. frequently universal, resp. upper frequently universal) for $\{\phi^1_{h,r}:\,r\in [0,1)\}$ if $f$ is in $\UU_A(\D)$ (resp. in $\mathcal{FU}_A(\D)$, resp. in $\mathcal{FU}^u_A(\D)$). 

\begin{prop}\label{Abel_FHC-continuous} Under the previous assumptions, any function $f$ in $\UU_A(\D)$ (resp. any $f$ in $\mathcal{FU}_A(\D)$, resp. any $f$ in $\mathcal{FU}^u_A(\D)$) is simultaneously universal (resp. frequently universal, resp. upper frequently universal) for each family $\{\phi_{h,r}^a:\,r\in [0,1)\}$, $a\in (0,1]$. 
\end{prop}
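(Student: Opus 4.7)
The plan is to reduce all three assertions to the simple identity
\[
N_f^a(V,K) \;=\; \{r\in [0,1):\, h_a(r)\in N_f(V,K)\} \;=\; h_a^{-1}(N_f(V,K)),
\]
valid for any $a\in (0,1]$, any admissible $K$ and any $V\in \VV(K)$, which is immediate from the equality $\phi_{h,r}^a = \phi_{h(a,r)}$. Since $h_a$ is continuous and strictly increasing from $[0,1)$ onto $[h_a(0),1)$ with $h_a(r)\to 1$ as $r\to 1^-$, its reciprocal $h_a^{-1}$ is continuous, strictly increasing, and sends $1^-$ to $1^-$. From this the first (purely universal) assertion follows at once: if $f\in \UU_A(\D)$ then $N_f(V,K)$ admits $1$ as a limit point, hence so does $h_a^{-1}(N_f(V,K))=N_f^a(V,K)$, which is in particular non-empty.

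For the (upper) frequently universal parts, the plan is to convert the derivative hypothesis into bi-Lipschitz control of $h_a$ near $1$. Fix $\varepsilon\in (0,c)$; by hypothesis there exists $r_0=r_0(a,\varepsilon)<1$ such that $c-\varepsilon\le h_a'(r)\le C+\varepsilon$ on $[r_0,1)$. The mean value theorem then makes $h_a$ bi-Lipschitz on $[r_0,1)$ with these constants, which I would use in two guises: first, letting $s\to 1^-$ in the inequalities $(c-\varepsilon)(s-r)\le h_a(s)-h_a(r)\le (C+\varepsilon)(s-r)$ gives
\[
(c-\varepsilon)(1-r)\;\le\; 1-h_a(r)\;\le\; (C+\varepsilon)(1-r),\qquad r\in[r_0,1);
\]
second, the Lipschitz bound $|h_a(F)|\le (C+\varepsilon)|F|$ applied to $F=h_a^{-1}(E)$ yields $|h_a^{-1}(E)|\ge |E|/(C+\varepsilon)$ for every Borel set $E\subset [h_a(r_0),1)$.

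Combining these with the observation that $h_a^{-1}(N_f(V,K)\cap [h_a(r),1))=N_f^a(V,K)\cap [r,1)$ (because $h_a$ is increasing) produces the central estimate
\[
\frac{|N_f^a(V,K)\cap [r,1)|}{1-r} \;\ge\; \frac{c-\varepsilon}{C+\varepsilon}\cdot \frac{|N_f(V,K)\cap [h_a(r),1)|}{1-h_a(r)},\qquad r\in[r_0,1).
\]
Since $h_a(r)\to 1^-$ as $r\to 1^-$, taking $\liminf$ (resp.\ $\limsup$) in $r$ and then $\varepsilon\to 0$ will give $\underline{d_u}(N_f^a(V,K))\ge (c/C)\,\underline{d_u}(N_f(V,K))$ (resp.\ the analogue for $\overline{d_u}$), which is positive as soon as $f\in \mathcal{FU}_A(\D)$ (resp.\ $f\in \mathcal{FU}^u_A(\D)$).

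The only step requiring any care—the main technical point, modest as it is—is to reconcile the paper's integer-indexed definitions of the uniform densities with the continuous version $\liminf_{r\to 1^-}|E\cap [r,1)|/(1-r)$ (resp.\ $\limsup$) used in the argument. This equivalence is a routine squeezing of $r$ between consecutive $r_n=1-1/n$ and $r_{n+1}$, and I expect no other real obstacle: once the change of variable is set up, the whole argument is mechanical.
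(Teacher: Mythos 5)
Your proposal is correct and follows essentially the same route as the paper's proof: identify $N_f^a(V,K)$ with the preimage of $N_f(V,K)$ under $h_a$, use the bounds $c\le h_a'\le C$ near $1$ to control the Lebesgue measure under this change of variable, and conclude $\underline{d_u}(N_f^a(V,K))\ge (c/C)\,\underline{d_u}(N_f(V,K))$ (and likewise for $\overline{d_u}$). The only difference is presentational — you work with the continuous-limit form of the uniform density and note the routine equivalence with the integer-indexed definition, whereas the paper evaluates directly at $r=1-1/N$ (resp.\ $r=h_a^{-1}(1-1/N)$) and handles the integer parts explicitly.
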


\begin{proof} Let us fix $f\in H(\D)$, $K\subset \T$ different from $\T$, $V\in \VV(K)$, $a\in (0,1)$, and set $N^1_f(V,K,a):=N^1_f(V,K)\cap (h_a(0),1)$. We note that
	\[
h_a^{-1}\left(N^1_f(V,K,a)\right)\subset N^a_f(V,K),
	\]
and recall that $N^1_f(V,K,a)$ is infinite whenever $N^1_f(V,K)$ is infinite. This proves that if $f\in \UU_A(\D)$ then $f$ is simultaneously universal for each family $\{\phi^a_{h,r}:\,r\in [0,1)\}$, $a\in (0,1]$. Note also that if $N^1_f(V,K)$ has positive lower (resp. upper) density, so has $N^1_f(V,K,a)$. 
To finish the proof, it is thus enough to check that if $N^1_f(V,K,a)$ has positive lower (resp. upper) density, then $h_a^{-1}\left(N^1_f(V,K,a)\right)$ also has positive lower (resp. upper) density. This may be a consequence of a known general result about densities, but we do not have found it in the literature. So let us give the details. This is essentially a change of variable. Indeed, by assumption, there exist two positive constants $0<c\leq C$ such that $c\leq h_a'(r) \leq C$ for any $r\in(0,1)$ large enough. Then the change of variable formula gives for any such $r$,
	\begin{equation}\label{change-var}
\left|h_a^{-1}\left(N^1_f(V,K,a)\right)\cap \left(r,1\right)\right|\geq \frac{1}{C}\displaystyle \left|N^1_f(V,K,a)\cap \left(h_a(r),1\right)\right|.
	\end{equation}
Choosing $r=1-1/N$ (resp. $r=r_N^a:=h_a^{-1}(1-1/N)$) we get from \eqref{change-var} and $N$ large enough,
\begin{equation}\label{eq-low}
	\left|h_a^{-1}\left(N^1_f(V,K,a)\right)\cap \left(1-1/N,1\right)\right|\geq \frac{1}{C}\displaystyle \left|N^1_f(V,K,a)\cap \left(h_a(1-1/N),1\right)\right|
\end{equation}
(resp.
\begin{equation}\label{eq-up}
\left|h_a^{-1}\left(N^1_f(V,K,a)\right)\cap \left(r_N^a,1\right)\right|\geq \frac{1}{C}\displaystyle \left|N^1_f(V,K,a)\cap \left(1-1/N,1\right)\right|.)
\end{equation}
If we set $t_N^a:=(1-h_a(1-1/N))^{-1}$, from \eqref{eq-low} we obtain
\begin{multline*}
N\left|h_a^{-1}\left(N^1_f(V,K,a)\right)\cap \left(1-\displaystyle\frac{1}{N},1\right)\right| \geq \frac{N}{C}\left|N^1_f(V,K,a)\cap \displaystyle\left(1-\frac{1}{\lfloor t_N^a \rfloor+1},1\right)\right|\\
= \displaystyle\frac{N}{C(\lfloor t_N^a \rfloor+1)}\displaystyle \left(\lfloor t_N^a \rfloor+1\right)\left|N^1_f(V,K,a)\cap \left(1-\frac{1}{\lfloor t_N^a \rfloor+1},1\right)\right|.
\end{multline*}
Since by assumptions $\liminf_{N\to \infty}\frac{N}{\lfloor t_N^a \rfloor+1}\geq c>0$ we deduce that
\[
\dlow\left(h_a^{-1}\left(N^1_f(V,K,a)\right)\right)\geq \frac{c}{C}\ \dlow(N^1_f(V,K,a)),
\]
hence that $f$ is frequently universal for $\{\phi_{h,r}^a:\,r\in [0,1)\}$ whenever $f\in \mathcal{FU}_A(\D)$.

Similarly, from \eqref{eq-up} (with $r_N^a=h_a^{-1}(1-1/N)$) we obtain, if we set this time $t_N^a:=(1-r_N^a)^{-1}$,
\begin{equation*}
\lfloor t_N^a\rfloor\left|h_a^{-1}\left(N^1_f(V,K,a)\right)\cap \left(1-1/\lfloor t_N^a\rfloor,1\right)\right|\geq \frac{\lfloor t_N^a\rfloor}{CN}N\displaystyle \left|N^1_f(V,K,a)\cap \left(1-1/N,1\right)\right|
\end{equation*}
Since $\limsup_{N\to \infty}\frac{\lfloor t_N^a \rfloor+1}{N}\geq c>0$ we deduce that
\[
\overline{d}\left(h_a^{-1}\left(N^1_f(V,K,a)\right)\right)\geq \frac{c}{C}\ \overline{d}(N^1_f(V,K,a)),
\]
hence that $f$ is upper frequently universal for $\{\phi_{h,r}^a:\,r\in [0,1)\}$ whenever $f\in \mathcal{FU}^u_A(\D)$.
\end{proof}

Thus Proposition \ref{Abel_FHC-continuous} ensures that the following property holds: there exists $f\in H(\mathbb{D})$ such that for all $a\in (0,1)$, for any compact set  $K\subset\mathbb{T}$ different from $\mathbb{T}$, any continuous function $g$ on $K$ and any $\varepsilon>0$, there exists $\Gamma\subset (0,1)$ with positive uniform lower density such that, for any $r\in \Gamma$, $\sup_K\vert f\circ\phi_{h,r}^a-g \vert<\varepsilon$. Moreover observe that examples of function $h$ satisfying the assumptions of the preceding proposition are given by $(a,r)\mapsto ar+(1-a)$, or $(a,r)\mapsto 1-\frac{a(1-r)}{a+(1-r)(1-a)}$. 

\vskip4mm

\noindent\textbf{Acknowledgment.} We would like to thank the anonymous referee for her/his useful suggestions and comments that improved the quality of the manuscript.  

\vskip4mm

\end{document}